\documentclass[reqno, 12pt]{amsart}

\usepackage{geometry}
\usepackage[dvipsnames]{xcolor}
\usepackage[shortlabels]{enumitem}
\usepackage{lineno}
\usepackage{marginnote}
\usepackage[colorinlistoftodos]{todonotes}

\usepackage[open,openlevel=2,atend]{bookmark}

\usepackage[abbrev,msc-links,backrefs]{amsrefs}
\usepackage{doi}

\renewcommand{\PrintDOI}[1]{\doi{#1}}

\usepackage[T1]{fontenc}
\usepackage[english]{babel}
\usepackage[utf8]{inputenc}

\usepackage{hyperref}
\usepackage{url}
\hypersetup{%
    colorlinks,%
    linkcolor = {red!60!black},%
    citecolor = {green!60!black},%
    urlcolor = {blue!60!black}%
}

\usepackage{changes}
\definechangesauthor[name=Giovanne, color=blue]{gio}

\usepackage{graphicx}
\graphicspath{{../figures/}}

\usepackage{tikz}
\usetikzlibrary{arrows,calc,positioning,decorations.pathmorphing,decorations.pathreplacing,decorations.markings}

\usetikzlibrary{arrows.meta}

\definecolor{BrickRed}{HTML}{B6321C}
\definecolor{BurntOrange}{HTML}{F7921D}
\definecolor{Bluezao}{HTML}{003F88}
\definecolor{RedViolet}{HTML}{A1246B}
\definecolor{Red}{HTML}{ED1B23}
\definecolor{Sepia}{HTML}{671800}
\definecolor{RawSienna}{HTML}{974006}
\definecolor{BurntOrange}{HTML}{F7921D}
\definecolor{green}{RGB}{126,198,54}

\tikzstyle{vertex}=[fill=white, draw=black, shape=circle, inner sep=2.2pt]
\tikzstyle{vertex mini}=[fill=black, draw=black, shape=circle, inner sep=1pt]
\tikzstyle{vertex exp}=[fill=white, draw=black, shape=circle, inner sep=1pt]

\tikzstyle{cluster}=[fill=white, draw=black, shape=circle, inner sep=10pt]
\tikzstyle{edge}=[->, draw=black, ultra thick, line width=1.2pt, opacity=0.8]
\tikzstyle{edge superregular}=[draw=gray, line width=19pt, opacity=0.3]
\tikzstyle{edge skewed}=[draw=BrickRed, line width=19pt, opacity=0.2]
\tikzstyle{edge path}=[-, draw=Bluezao, ultra thick, line width=1.2pt, decoration={markings, mark=at position 0.5 with {\arrow{>}}}]
\tikzstyle{cobra}=[->, BrickRed, ultra thick, line width=2pt]
\tikzstyle{set}=[fill=white, draw=black, shape=circle, inner sep=10pt]
\tikzstyle{set 2}=[fill=white, draw=black, dotted, shape=circle, inner sep=0.5cm]
\tikzstyle{set 3}=[fill=white, draw=black, thick, shape=circle, inner sep=0.7cm]
\tikzstyle{set 4}=[fill=gray,opacity=0.3, draw=black, thick, shape=circle, inner sep=0.7cm]

\usepackage{amsmath}
\usepackage{amssymb}
\usepackage{amscd}
\usepackage{amsthm} 
\usepackage{dsfont}
\usepackage{mathtools}

\newtheorem{thm}{Theorem}[section]
\newtheorem{lemma}[thm]{Lemma}

\newtheorem{prop}[thm]{Proposition}
\newtheorem{fact}[thm]{Fact}

\theoremstyle{definition}
\newtheorem{defn}[thm]{Definition}

\newtheorem{ques}[thm]{Question}
\theoremstyle{remark}

\newcommand{\floor}[1]{{\left\lfloor #1 \right\rfloor}}
\newcommand{\ceil}[1]{{\left\lceil #1 \right\rceil}}
\newcommand{\defi}[1]{\emph{#1}}

\newcommand{\sepline}{\noindent\textcolor{gray}{\rule{\linewidth}{0.1pt}}\vspace{0.2em}}

\def\Root{\textsc{Root}}
\def\NN{\mathbb N}
\def\semidegree{\delta^{0}}
\def\RN{\textsc{RN}}
\def\Exp{\mathbb{E}}
\def\P{\mathbb{P}}
\let\eps=\varepsilon
\let\theta=\vartheta
\let\phi=\varphi

\def\cP{\mathcal{P}}
\def\cR{\mathcal{R}}

\def\qand{\quad\text{and}\quad}

\renewcommand{\subset}{\subseteq}

\begin{document}

\title{Semidegree threshold for spanning trees in oriented graphs}
\date{\today}

\author[P. Araújo]{Pedro Araújo}
\address{Departamento de Matemática, Universidade Federal de Pernambuco, Recife, Brasil}
\email{pedrocampos.araujo@ufpe.br}

\author[G. Santos]{Giovanne Santos}
\address{Departamento de Ingeniería Matemática,
Universidad de Chile, Santiago, Chile}
\email{gsantos@dim.uchile.cl}

\author[M. Stein]{Maya Stein}
\address{\parbox{5.5in}{Departamento de Ingeniería Matemática y Centro de Modelamiento
Matemático (CNRS IRL2807), Universidad de Chile, Santiago, Chile\vspace{0.2cm}}}
\email{mstein@dim.uchile.cl}

\thanks{Pedro Ara\'ujo received funding from ANID grant CMM Basal FB210005 and was supported by the grant 23-06815M of the Grant Agency of the Czech Republic. 
Giovanne Santos was supported by ANID Becas/Doctorado Nacional 21221049. Maya Stein was supported by ANID Regular Grant 1221905 and by ANID grant CMM Basal FB210005. 
This project has also received funding from the European Union’s Horizon 2020 research and innovation programme under the Marie Skłodowska-Curie grant agreement No.~101007705 (RandNET)}

\keywords{extremal graph theory, digraphs, oriented trees}
\subjclass[2020]{05C35 (primary), 05C20 (secondary)}

\begin{abstract}
    We show that for all $\gamma > 0$ and $\Delta \in \NN$, there is
    some $n_0$ such that, if $n \geq n_0$, then every oriented graph
    on $n$ vertices with minimum semidegree at least $(3/8 + \gamma)n$ contains
    a copy of each oriented tree on $n$ vertices with maximum degree at most $\Delta$. This is asymptotically best possible.
\end{abstract}

\maketitle

\section{Introduction}
\label{sec:intro}

A classical result of Dirac~\cite{dirac} states that every graph on $n \geq 3$ vertices with minimum degree at 
least $n/2$ contains a Hamilton cycle, i.e.~a cycle on $n$ vertices. A closely related result is
the famous theorem of Koml\'{o}s, S\'{a}rk\"{o}zy and Szemer\'{e}di~\cite{kss1} which states
that, for any $\gamma >0$ and $\Delta \in \NN$, there exists a~$n_0$ such that every graph
on $n \geq n_0$ vertices
with minimum degree at least~${(1/2+\gamma)n}$ contains every spanning tree
with maximum degree at most $\Delta$. The bound on the maximum degree  of the tree  was later improved by the same authors  to  $O(n/\log n)$~\cite{kss2}.
In this paper, we investigate Dirac-type conditions for the embedding of bounded
degree trees in digraphs. 

One of the earliest Dirac-type results for digraphs is due to 
Ghouila-Houri~\cite{gh}, who showed that every strongly connected digraph~$D$ on $n$ vertices
with~${\semidegree(D) \geq n/2}$ has a directed Hamilton cycle (for all notation, see Section~\ref{sec:preliminaries}). A generalization of this theorem, due to DeBiasio and Molla~\cite{dm} and 
DeBiasio, K\"{u}hn, Molla, Osthus and Taylor~\cite{dkmot} states that $\semidegree(D) \geq n/2 + 1$ suffices to guarantee all   orientations of the Hamilton cycle in an $n$-vertex digraph $D$, if $n$ is  sufficiently large, and moreover, the term $+1$ is only needed for the  antidirected Hamilton cycle. With respect to spanning trees, 
Mycroft and Naia~\cite{tassio} showed that
for any $\gamma > 0$ and $\Delta \in \NN$ there exists $n_0$ such
that any digraph $D$ on $n \geq n_0$ vertices with $\semidegree(D) \geq (1/2 + \gamma)n$
contains every oriented tree $T$ on $n$ vertices with $\Delta(T) \leq \Delta$, which is reproved in this work. Kathapurkar and Montgomery~\cite{km} improved this, relaxing the maximum degree bound for the tree to $O(n/\log n)$. In conclusion, both for Hamiltonicity and for spanning tree universality, the bounds on the minimum degree in the graph setting correspond to the bounds on the minimum semidegree in the digraph setting (if we disregard the $+1$ term for antidirected Hamilton cycles).

This changes in the setting of  oriented graphs. Kelly, K\"{u}hn and Osthus~\cite{kelly}
showed that for any~${\gamma > 0}$, every sufficiently large oriented graph $G$ on $n$ vertices
with~${\semidegree(G) \geq (3/8+\gamma)n}$ contains a directed Hamilton
cycle. An exact result for sufficiently
large oriented graphs was later proved by Keevash, K\"{u}hn and
Osthus~\cite{exact}, and Kelly~\cite{kelly2}   proved
asymptotically optimal results for Hamilton cycles with any orientation. This was recently improved by 
Wang, Wang and Zhang~\cite{wwz25} who showed that every
sufficiently large oriented graph with minimum semidegree at least $\ceil{(3n - 1)/8}$
contains every possible orientation of a Hamilton cycle.

To our knowledge, until now no corresponding results are known for the spanning tree problem in oriented graphs.
We show that in this setting, the minimum semidegree threshold for containment of bounded-degree oriented
tree  asymptotically coincides with the threshold for oriented Hamilton cycles.

\begin{thm}
  \label{thm:main_oriented}
  For every $\gamma > 0$ and $\Delta \in \NN$, there exists $n_0 \in \NN$
  such that the following holds for every $n \geq n_0$. If $G$ is an oriented graph
  on $n$ vertices with ${\semidegree(G) \geq (3/8 + \gamma)n}$, and $T$ is an oriented
  tree on $n$ vertices with $\Delta(T) \leq \Delta$, then $G$ contains a copy of $T$.
\end{thm}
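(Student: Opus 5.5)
We will follow the regularity-based strategy used by Koml\'os, S\'ark\"ozy and Szemer\'edi and by Mycroft and Naia, adapting its key structural step to the oriented-graph setting at semidegree $(3/8+\gamma)n$.

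The first step is to apply the diregularity lemma to $G$, obtaining a reduced oriented-type digraph $R$ on $k$ clusters that inherits minimum semidegree at least $(3/8+\gamma/2)k$. Since $R$ is essentially an oriented graph with high semidegree, the Kelly--K\"uhn--Osthus theorem (and its robust-expansion version) gives structure in $R$. Concretely, $R$ is a robust outexpander, which yields a spanning collection of ``gadgets''. These are short cycles, or pairs of clusters joined in a way that can absorb both forward and backward edges of the tree. The collection should cover almost all of $R$, support an almost-perfect fractional assignment of vertices respecting cluster sizes, and remain connected by short paths of every orientation. The connectivity follows from robust expansion, since between any two clusters there are short walks of any prescribed orientation pattern.

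The second step handles the tree. We cut $T$ into $O(1/\beta)$ small subtrees of size at most $\beta n$ joined through a bounded number of vertices, using the standard tree-splitting lemma. We also reserve a linear set $L$ of leaves, or of bare paths, for the end of the argument. Bounded degree guarantees that $T$ has either $\Omega(n)$ leaves or $\Omega(n)$ vertex-disjoint bare paths of bounded length.

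The third step embeds most of $T$ randomly or greedily into the regular pairs assigned to the gadgets. Each small subtree is placed into a gadget so that its in/out-edges go along edges of $R$ in the right direction, keeping every cluster's load balanced. Balance is the crux: a subtree's two colour classes, split by direction, must be distributed proportionally. For that we map each subtree onto a directed cycle or an ``antiparallel'' structure of $R$ that is available in oriented graphs, in place of the double edges used in the digraph case.

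The final step completes the embedding by absorption. We first set aside a small random vertex set in $G$ and correct the imbalance of the leftover vertices using the connecting walks. The reserved leaves are then embedded through a bipartite matching argument, using Hall's condition together with the semidegree of each vertex into the random reserve. Bare paths are embedded instead via the oriented-path connecting property, in the style of Kelly's oriented Hamilton cycle result.

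We expect the main obstacle to be the balancing step. At $3/8$ there are extremal-like configurations in which $R$ lacks a perfect fractional cover by the needed gadgets, so we must show that semidegree $(3/8+\gamma)k$ in an oriented $R$ forces an almost-perfect fractional tiling by such gadgets. We would first attempt this via an LP-duality argument, modelled on the proof that such $R$ contains an almost-perfect directed cycle factor, extended to every orientation pattern of the tree.
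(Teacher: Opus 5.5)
\textbf{There is a genuine gap in your proposal.} The balancing step is the crux, as you say, but you leave it open, and the mechanism you sketch for it cannot work as described.

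Oriented graphs have no antiparallel pairs. A homomorphism of a subtree into a directed cycle $C=(c_0,\dots,c_{L-1})$ of $R$ is forced once the root's image $c_s$ is fixed: each vertex $v$ goes to $c_{h(v)+s}$, where $h(v)$ is the number of forward minus backward arcs on the root-to-$v$ path. Take a complete binary tree with all arcs directed away from the root. About half its vertices share the same value of $h$, so no fixed gadget balances such a piece.

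Balance can only come from spreading the shifts $s$ of many pieces uniformly over the clusters. These shifts are dictated by the arcs of $R$ used at the connections. If $R$ were periodic, for instance a blow-up of a directed triangle, this spreading is impossible, and such an out-branching does not embed at all.

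So what you need is an aperiodicity or mixing property of $R$, not an almost-perfect fractional tiling tied to $3/8$. In fact $3/8$ is needed only to make the reduced graph a robust out-expander, which you already quote. This is how the paper proceeds: Theorem~\ref{thm:main_oriented} follows from Lemma~\ref{lem:outex}(b) together with Theorem~\ref{thm:main}, which only assumes robust out-expansion and semidegree $\gamma n$.

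\textbf{The missing idea.} From robust expansion, the paper finds a spanning regular subgraph $H$ of the reduced graph with the cherry property. This means that for every partition $\{X,Y\}$ of $V(H)$, some $x\in X$ and $y\in Y$ have a common out-neighbour, and some such pair has a common in-neighbour. $H$ is obtained by random sparsification (via Janson's inequality) and then completing matchings to $1$-factors using Hamilton cycles.

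An $L^2$ argument then shows that a random walk following any orientation pattern mixes to uniform on $H$. The tree is assigned top-down: each child goes to a uniformly random neighbour of the correct direction of its parent's image. The only exception is reserved leaves, bare paths and switches, which go deterministically along a Hamilton cycle of $R$. Azuma's inequality over sublinear pieces then gives near-balance.

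Exact balance and the exceptional vertices are repaired by rerouting these reserved structures along skewed traverses of the Hamilton cycle. Before this, $T$ and $D$ are split into two parts, so that only a very small exceptional set remains. The Blow-up Lemma finishes the embedding.

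\textbf{The final covering step.} Your completion step also needs more than you state. Hall's condition for matching leftover vertices to already embedded parents does not follow from semidegree $3n/8$ alone. Moreover, bare paths in an oriented tree need not be directed; the paper treats directed bare paths and switches as separate cases.
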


The following construction due Kelly~\cite[Proposition 4]{kelly2}, which improves  earlier
constructions of H\"aggkvist~\cite{hag} and Keevash, K\"{u}hn, and Osthus~\cite{exact}, 
shows that the bound on the minimum semidegree in
Theorem~\ref{thm:main_oriented} is asymptotically tight. Let $n=8k+4$, for some~${k \in \NN}$.
Consider the oriented graph $G$ on $n$ vertices such that 
the vertex set~$V(G)$ is partitioned into
four sets~$X,Y,W$ and $Z$, each with exactly $2k+1$ vertices. The sets~$X$ and~$Z$
each induce a regular tournament. All arcs from~$X$ to $W$, from $W$ to $Z$,
from~$Z$ to~$Y$, and from $Y$ to $X$ are present in $G$. Finally, the pair $(Y,W)$ induces 
a regular bipartite tournament (see Figure~\ref{fig:construction}). Note that $G$ has minimum semidegree exactly 
$3n/8-1/2$, 
and that it does not contain any antidirected path on more than $3n/4$ vertices. In particular,
the graph $G$ does not contain the antidirected Hamilton path. Recently, this
construction was extended to all $n$ by
Wang, Wang and Zhang~\cite[Proposition~1.2]{wwz25}.

\begin{figure}[ht!]
    \centering
    \begin{tikzpicture}[scale=0.7]
  \node [style=set] (W) at (0, 3) {};
	\node [style=set] (Z) at (3, 0) {};
	\node [style=set] (Y) at (6, 3) {};
	\node [style=set] (X) at (3, 6) {};

  \node [vertex mini] (a0) at (3,6.4) {};
  \node [vertex mini] (a1) at (2.6,6) {};
  \node [vertex mini] (a2) at (3.4,6) {};
  \node [vertex mini] (a3) at (2.8,5.6) {};
  \node [vertex mini] (a4) at (3.2,5.6) {};
  
  \begin{scope}[yshift=-6cm]
  \node [vertex mini] (b0) at (3,6.4) {};
  \node [vertex mini] (b1) at (2.6,6) {};
  \node [vertex mini] (b2) at (3.4,6) {};
  \node [vertex mini] (b3) at (2.8,5.6) {};
  \node [vertex mini] (b4) at (3.2,5.6) {};
  \end{scope}

%

  \draw[thin, arrows = {-Stealth[length=3pt]}] (a0) -- (a1);
  \draw[thin, arrows = {-Stealth[length=3pt]}] (a0) -- (a3);
  \draw[thin, arrows = {-Stealth[length=3pt]}] (a2) -- (a0);
  \draw[thin, arrows = {-Stealth[length=3pt]}] (a4) -- (a0);
  \draw[thin, arrows = {-Stealth[length=3pt]}] (a1) -- (a2);
  \draw[thin, arrows = {-Stealth[length=3pt]}] (a1) -- (a3);
  \draw[thin, arrows = {-Stealth[length=3pt]}] (a3) -- (a4);
  \draw[thin, arrows = {-Stealth[length=3pt]}] (a3) -- (a2);
  \draw[thin, arrows = {-Stealth[length=3pt]}] (a2) -- (a4);
  \draw[thin, arrows = {-Stealth[length=3pt]}] (a4) -- (a1);

  \draw[thin, arrows = {-Stealth[length=3pt]}] (b0) -- (b1);
  \draw[thin, arrows = {-Stealth[length=3pt]}] (b0) -- (b3);
  \draw[thin, arrows = {-Stealth[length=3pt]}] (b2) -- (b0);
  \draw[thin, arrows = {-Stealth[length=3pt]}] (b4) -- (b0);
  \draw[thin, arrows = {-Stealth[length=3pt]}] (b1) -- (b2);
  \draw[thin, arrows = {-Stealth[length=3pt]}] (b1) -- (b3);
  \draw[thin, arrows = {-Stealth[length=3pt]}] (b3) -- (b4);
  \draw[thin, arrows = {-Stealth[length=3pt]}] (b3) -- (b2);
  \draw[thin, arrows = {-Stealth[length=3pt]}] (b2) -- (b4);
  \draw[thin, arrows = {-Stealth[length=3pt]}] (b4) -- (b1);

  \draw[ultra thick,arrows = {-Stealth}] (X) -- (W);
  \draw[ultra thick,arrows = {-Stealth}] (W) -- (Z);
  \draw[ultra thick,arrows = {-Stealth}] (Z) -- (Y);
  \draw[ultra thick,arrows = {-Stealth}] (Y) -- (X);

  \draw[ultra thick, arrows = {-Stealth[harpoon]}] (0.7,3.1) -- (5.3,3.1);
  \draw[ultra thick, arrows = {-Stealth[harpoon]}] (5.3,2.9) -- (0.7,2.9);
  
  \begin{scope}[node distance=0.8cm]
  \node[left of=X] {$X$};
  \node[left of=W] {$W$};
  \node[right of=Y] {$Y$};
  \node[right of=Z] {$Z$};
  \end{scope}

\end{tikzpicture}
    \caption{An oriented graph with minimum degree $3n/8-1/2$ without an antidirected Hamilton path.}
    \label{fig:construction}
\end{figure}

Two open problems arise from our work. The first is determining  the exact threshold for
an oriented graph to contain bounded degree trees, and the second is determining  the asymptotic threshold for trees with
maximum degree that grows with $n$.
In particular, it seems natural to ask if the bound on the maximum
degree of the tree in Theorem~\ref{thm:main} can be relaxed to $O(n/\log n)$,
as in the works of Koml\'{o}s, S\'{a}rk\"{o}zy and Szemer\'{e}di~\cite{kss2},
and Kathapurkar and Montgomery~\cite{km}.

However, as Rozho\v{n}
and \v{S}ámal~\cite{rs26}
pointed out, for sufficiently large $n$, the $k$th
power~$C_n^{k}$ of a directed cycle on $n$
vertices, where~${k=(3/8 + \alpha)n}$
and $0 < \alpha < 1/8$, does
not contain the oriented tree~$T$ in which the root
has out-degree~$\sqrt{n}-1$ and each of its out-neighbours
has out-degree~$\sqrt{n}$. Indeed, $C_n^{k}$ is a regular
digraph with in- and out-degree~$k$,
and every vertex~$x$ in $C_n^{k}$ has the property
that~$|N^{+}(N^{+}(x))| = 2k$.
In contrast, $T$ contains a vertex $u$ such that $|N^{+}(N^{+}(u))| = n-1$.
Since $2k < n-1$ for $n$ sufficiently
large, $C_n^{k}$ cannot contain a copy of $T$.
This naturally leads to the following question.

\begin{ques}
    \label{ques:ASS_conj}
    Can the bounded maximum degree assumption on the tree in Theorem~\ref{thm:main_oriented}
    be replaced by the weaker assumption $\Delta(T) \leq f(n)$, for some function $f(n) \rightarrow \infty$?
\end{ques}

Since our proof of Theorem~\ref{thm:main_oriented}
crucially relies on the assumption that the tree has
bounded maximum degree,
extending our approach to Question~\ref{ques:ASS_conj} would require
additional ideas.

\subsection{Robust expansion} 
In order to prove Theorem~\ref{thm:main_oriented}, we will establish a version of it for
robust out-expanders. Given $\nu, \tau > 0$, we say that a digraph $D$ on~$n$ vertices
is a \defi{robust $(\nu,\tau)$-out-expander} if for every set $S \subseteq V(D)$
with~${\tau n \leq |S| \leq (1-\tau)n}$,  at least $|S| + \nu n$ vertices each receive at least $\nu n$
arcs from $S$. The notion of robust out-expanders was introduced
by K\"{u}hn, Osthus and Treglown~\cite{hamiltonian-degree} to prove Theorem~\ref{thm:hamilton} below
and has found many applications in extremal problems in digraphs. In the statement of the following
theorem and in the rest of the paper, we use the notation~$x \ll y$ 
to indicate that
there is an increasing  function~$f$ such that,
for any given $y$, whenever
we choose~${x \leq f(y)}$, all calculations
and inequalities
involving these constants in the sequence are valid.

\begin{thm}[\cite{hamiltonian-degree}]
  \label{thm:hamilton}
  Let $1/n \ll \nu \leq \tau \ll \gamma < 1$. Let $D$ be a a robust $(\nu,\tau)$-out-expander 
  on $n$ vertices with~${\semidegree(D) \geq  \gamma n}$. Then~$D$ contains a
  directed Hamilton cycle.
\end{thm}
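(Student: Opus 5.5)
The statement is the Hamiltonicity theorem of Kühn, Osthus and Treglown for robust out-expanders. I would prove it by the standard route: apply the diregularity lemma, find a Hamilton cycle in the reduced digraph, and blow it up, using the expansion to connect and absorb.

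\emph{Step 1: regularity and the reduced digraph.} Choose $1/n \ll \eps \ll d \ll \nu$. Apply the degree form of the diregularity lemma to $D$ with parameters $\eps, d$. This gives clusters $V_1,\dots,V_k$ of equal size $m$ and an exceptional set $V_0$ with $|V_0|\le \eps n$. Let $R$ be the reduced digraph, with an arc $ij$ whenever $(V_i,V_j)$ is $\eps$-regular of density at least $d$. Two standard facts hold:
\begin{enumerate}[(i)]
\item $R$ inherits robust out-expansion: it is a $(\nu/2,2\tau)$-out-expander.
\item $R$ has $\semidegree(R)\ge \gamma k/2$.
\end{enumerate}
Both follow by comparing edge counts between unions of clusters with those in $D$.

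\emph{Step 2: a $1$-factor of $R$.} A robust out-expander with linear semidegree has a $1$-factor. Take the bipartite graph whose two sides are copies of $V(R)$, with an edge $x^+y^-$ whenever $xy$ is an arc. Expansion gives $|N(S)|\ge |S|$ for every $S$: for small $S$ use the semidegree, for middle-sized $S$ use expansion, and for large $S$ use the semidegree on the complement. Hall's theorem then gives a collection of vertex-disjoint cycles $C_1,\dots,C_t$ covering $R$. After discarding a few vertices from each cluster, every arc of these cycles becomes an $(\eps',d/2)$-super-regular pair.

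\emph{Step 3: connect and absorb.} Use robust expansion of $R$ to find short shifted walks between consecutive cycles. The key property is that from any cluster one can reach any other cluster within a bounded number of steps, in a balanced way: a walk that winds around the cycles and enters and leaves each cluster equally often. Realise these walks in $D$ as short paths. Each exceptional vertex $v\in V_0$ is absorbed by choosing clusters $V_i\ni$ many in-neighbours of $v$ and $V_j\ni$ many out-neighbours of $v$; these exist by the semidegree condition. We then insert $v$ via a shifted walk from $i$ to $j$ that keeps the cluster counts balanced. This produces a spanning "skeleton" consisting of boundedly many paths, which together use each cluster equally. The remaining vertices are covered by the blow-up lemma applied to the super-regular cycle pairs, producing a single Hamilton cycle.

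\emph{Main obstacle.} The hard step is the balancing in Step 3. Each absorbed vertex and each connecting path uses vertices unevenly across clusters, and the blow-up lemma needs exactly equal leftover cluster sizes along each cycle. I would handle this with shifted walks, which are closed-up combinations of "edge plus backtrack around the $1$-factor" and are balanced by construction. Expansion of $R$ guarantees that every cluster is reachable via such walks in bounded length, say at most $k$. The parameters $\eps\ll d\ll\nu$ then keep all these adjustments small compared with $m$.
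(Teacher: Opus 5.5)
The paper does not prove this theorem; it quotes it from K\"uhn, Osthus and Treglown. Your outline follows the standard architecture of their proof: regularity, inheritance of semidegree and robust expansion by $R$, a $1$-factor via Hall, super-regularising $F$, shifted walks, and the blow-up lemma. Steps~1 and~2 are fine. There is, however, a genuine gap in Step~3, in the absorption of $V_0$. It sits exactly where you claim that $\eps\ll d\ll\nu$ keeps all adjustments small compared with $m$.

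\textbf{Why the accounting fails.} You insert each $v\in V_0$ with its own shifted walk and realise these walks as paths in $D$.
\begin{itemize}
\item A nontrivial shifted walk starting at a cluster $a$ first runs around the whole $F$-cycle through $a$. So each such path uses a vertex of every cluster on that cycle.
\item If, say, $F$ is a Hamilton cycle of $R$, every cluster must supply at least $|V_0|$ vertices to your skeleton.
\item The regularity lemma only gives $|V_0|\le\eps n$, and the vertices you discard to super-regularise $F$ already contribute order $\eps n$.
\item Meanwhile $m\approx n/k$, and $k$ is bounded above only by $M_0(\eps)$; typically $k\ge 1/\eps$. Hence $|V_0|$ can exceed $m$.
\end{itemize}
Your accounting really needs $\eps k\ll 1$, and no hierarchy $\eps\ll d\ll\nu$ provides that. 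For the same reason, walks of length up to $k$ are not of bounded length, and the skeleton has about $|V_0|$ pieces, not boundedly many.

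\textbf{How to repair it.} An additional idea is required; either of the following works.
\begin{itemize}
\item Shrink the exceptional set below cluster size by applying regularity twice. Split $D$ into two robust expanders as in Lemma~\ref{lemma:partition}, and regularise one part with $\eps$. Then regularise the other part together with the first exceptional set, using $\eps'\ll 1/M_0(\eps)$. The new exceptional set has at most $\eps' n$ vertices, which is negligible relative to the clusters of the first part. Your shifted-walk absorption then works there, and the two halves are joined at the end. This is the device the paper itself uses in Section~\ref{ssec:preparing_D}.
\item Stop paying a full traversal per exceptional vertex. Encode the skeleton as a circulation on $R$ together with $V_0$, with weight about $m$ on every $F$-arc. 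Absorbing $v$ then changes only $O(1/\nu)$ arc weights by $\pm1$, because you lower $F$-arc weights instead of adding traversals. Spread these changes over the clusters, using the many choices robust expansion gives, and also link the $F$-cycles. Euler's theorem then yields one closed walk visiting each cluster exactly $|V_i|$ times, which can guide the blow-up lemma.
\end{itemize}
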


We prove that in the same setting, $D$ also contains all bounded degree oriented spanning trees.

\begin{thm}
  \label{thm:main}
  Let $1/n \ll \nu \leq \tau \ll \gamma, \Delta^{-1}$.
  Let $D$ be a a robust $(\nu,\tau)$-out-expander 
  on~$n$ vertices with~${\semidegree(D) \geq  \gamma n}$, and let~$T$ be an oriented tree on~$n$
  vertices with~${\Delta(T) \leq \Delta}$. Then $D$ contains a copy of $T$.
\end{thm}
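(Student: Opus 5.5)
We prove Theorem~\ref{thm:main} by a regularity-based embedding with absorption-style finishing, in the spirit of Koml\'os, S\'ark\"ozy and Szemer\'edi and of Mycroft and Naia. The idea is to build a reduced digraph with a structure that lets us distribute the tree evenly, embed almost all of the tree greedily or by random walks along that structure, and then use a small reserved part of both the tree and the host digraph to complete the embedding exactly.

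\textbf{Reduced digraph and cycle structure.} Apply the diregularity lemma to $D$ with parameters $1/n \ll \eps \ll d \ll \nu$ to obtain a reduced digraph $R$ on clusters $V_1,\dots,V_k$. Robust out-expansion is inherited: $R$ is a robust $(\nu/2,2\tau)$-out-expander with $\semidegree(R)\ge \gamma k/2$. By Theorem~\ref{thm:hamilton}, $R$ has a directed Hamilton cycle $C$. After moving a few vertices to an exceptional set, every arc of $C$ is $(\eps,d)$-superregular. Robust expansion, together with $C$, gives a fractional or "balancing" structure: from any cluster one can reach any other along walks of length $O(1)$ in $R$ using both forward and backward arcs. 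This is what allows arbitrary orientations of tree paths to be routed.

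\textbf{Splitting the tree.} Since $\Delta(T)\le\Delta$, we cut $T$ into $O(1/\beta)$ subtrees of size at most $\beta n$, joined along a bounded-degree skeleton. We also find in $T$ either $\Omega(n)$ vertex-disjoint bare paths of bounded length or $\Omega(n)$ leaves; this is the standard dichotomy. These will serve as flexible pieces for the final step. Each subtree is then assigned to a closed walk in $R$, and its vertices are embedded by following the orientations of the tree arcs. An out-arc of the tree moves one step forward along $C$ or along an arc of $R$, and an in-arc moves one step backward. The walks are chosen so that the load on each cluster is close to $|V_i|(1-\eta)$. Vertices are embedded greedily into typical vertices, keeping bounded-degree neighbourhoods large, as in the blow-up lemma.

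\textbf{Finishing.} The leftover host vertices (exceptional vertices plus an $\eta$-fraction of each cluster) must be covered exactly by the reserved flexible pieces.
\begin{itemize}
\item \emph{Leaves case.} Leftover vertices are matched to the parents of reserved leaves via a Hall-type argument. Semidegree $\gamma n$ ensures that each leftover vertex has many in- and out-neighbours among the images of these parents.
\item \emph{Bare paths case.} We use connecting paths of every orientation between prescribed endpoints. These are obtained from robust expansion (short oriented paths of fixed length exist between large sets), and the leftover vertices are absorbed into them.
\end{itemize}
The main obstacle is this exact covering for arbitrary orientations. Antidirected stretches cannot be routed along the directed cycle $C$, and cluster loads must balance in-steps against out-steps. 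My first approach is to precompute, for each orientation pattern of the reserved bare paths, an absorbing structure. Each leftover vertex $v$ would be insertable into many path segments, using that $v$ has $\gamma n$ in- and out-neighbours and that robust expansion gives many bounded-length connections of every orientation. The exceptional vertices would then be handled first, by embedding a tree vertex of suitable orientation onto them before the main embedding.
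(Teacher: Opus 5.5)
Your outline has the right skeleton (regularity, a Hamilton cycle $C$ in $R$ with superregular arcs, reserved flexible pieces of $T$), but there is a genuine gap: the steps that carry the real difficulty are either missing or would fail as described.

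\textbf{Balanced distribution.} Nothing in your plan makes an arbitrarily oriented tree spread evenly over the clusters. Under your rule (an out-arc moves one step forward on $C$, an in-arc one step back), the cluster of a vertex is determined by the signed number of arcs on its path to the root. So an antidirected path, or any tree with many switches, is packed into $O(1)$ clusters. Choosing neighbours in $R$ at random does not repair this by itself. Since $R$ is not regular, forward and backward steps have different stationary distributions, and walks with alternating orientation patterns need not mix. Two-way reachability in $R$ is not the relevant property. This is the paper's main new ingredient. It finds a spanning \emph{regular} subgraph of $R$ with the \emph{cherry property} (Proposition~\ref{proposition:regular-cherry-subgraph}): every partition $\{X,Y\}$ has a pair across it with a common out-neighbour and a pair with a common in-neighbour. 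An $L^2$-contraction argument shows that an oriented random walk with any orientation pattern mixes to uniform on this subgraph (Proposition~\ref{proposition:mixing}). Concentration for the whole tree then comes from cutting it into pieces of size $n^{1-\Theta(1)}$ and applying Azuma's inequality (Proposition~\ref{prop:sr_assignment}).

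\textbf{Exact finishing.} You leave this open, and the leaves/bare-paths dichotomy cannot close it because bare paths need not be directed. The paper refines the dichotomy (Lemma~\ref{lemma:split_tree}) into three cases: many leaves, many \emph{directed} bare paths, or many switches of one type. A degree-two source or sink whose three-vertex subtree is assigned along $C$ has both neighbours in the same cluster, so it behaves like a leaf. Directed bare paths of length a multiple of $k$ wind around $C$ and can be rerouted through skewed-traverses (Lemma~\ref{lemma:skewed}). These reroutings absorb exceptional vertices and shift single units of load from overfull to underfull clusters until the assignment is perfectly balanced. Csaba's Blow-up Lemma then performs the exact embedding, with the reserved pieces supplying (C5), (C8) and (C9).

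Your own finishing steps have specific problems. The Hall-type step is unjustified as stated: the parent images have density far below $\gamma$, so $\gamma n$ neighbours may miss them entirely. Placing tree vertices on exceptional vertices in advance ignores that an inner path vertex placed there forces rerouting the rest of the path, which changes the cluster loads. There is also a counting obstruction the paper must overcome. Reroutable paths have length $\Theta(k)$, so there are only $O(n/k)$ of them, and since $k\ge 1/\eps$ this cannot absorb an exceptional set of size up to $\eps n$. The paper handles this in four steps:
\begin{enumerate}
\item It splits $D$ into $A',B'$ and regularises $D[B']$.
\item It moves the exceptional set of $D[B']$ into $A'$ and re-regularises $D[A'\cup V_0]$ with $\eps_A\ll 1/M_B$.
\item Only $O(\eps_A n)$ exceptional vertices remain, all absorbed on the $B$ side.
\item Correspondingly, $T$ is split at an arc into $T_A,T_B$ with $|T_A|=|A|$; the two parts are embedded separately and joined through that arc.
\end{enumerate}
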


Now, Theorem~\ref{thm:main_oriented} follows immediately from Theorem~\ref{thm:main} and an application
of part~(b) of the following known result.

\begin{lemma}[\cite{ko}]\label{lem:outex}
    Let $1/n \ll \nu \ll \tau \leq \gamma$ and let $D$ be a digraph on $n$ vertices. If
    \begin{enumerate}[(a)]
     \item  $D$
    satisfies~${\semidegree(D) \geq (1/2 + \gamma)n}$, 
    or
          \item $D$
    is an oriented graph  with~${\semidegree(D) \geq (3/8 + \gamma)n}$, 
    \end{enumerate}
  then~$D$ is
    a robust $(\nu,\tau)$-out-expander.
\end{lemma}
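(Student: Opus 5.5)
The statement to prove is the last one displayed, Lemma~\ref{lem:outex}, which is cited from~\cite{ko}. The plan is a direct counting argument on a set $S$ with $\tau n \le |S| \le (1-\tau)n$.

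Let $RN^+(S)$ be the set of vertices receiving at least $\nu n$ arcs from $S$. Suppose for contradiction that $|RN^+(S)| < |S| + \nu n$. Double-count the arcs leaving $S$. Each vertex of $S$ sends at least $\delta^+ \ge \delta^0(D)$ arcs, so there are at least $|S|\delta^0(D)$ such arcs. Vertices outside $RN^+(S)$ absorb fewer than $\nu n \cdot n = \nu n^2$ of them. Vertices inside $RN^+(S)$ absorb at most $|RN^+(S)|\cdot|S|$ of them, and in the oriented case considerably fewer.

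\emph{Part (a).} Write $R = RN^+(S)$. Each vertex of $R$ has at most $|S|$ in-neighbours in $S$. Hence
\[
|S|(1/2+\gamma)n \le |R||S| + \nu n^2,
\]
so $|R| \ge (1/2+\gamma)n - \nu n/\tau$. Since $\nu \ll \tau$, this gives $|R| \ge (1/2+\gamma/2)n$, which exceeds $|S| + \nu n$ whenever $|S| \le n/2$.

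For $|S| > n/2$, I would instead use in-degrees. Any vertex $v$ has at least $(1/2+\gamma)n$ in-neighbours, so at least $|S| - (1/2-\gamma)n \ge 2\gamma n \ge \nu n$ of them lie in $S$. Thus every vertex lies in $R$, giving $|R| = n \ge |S| + \nu n$ because $|S| \le (1-\tau)n$ and $\nu \le \tau$.

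\emph{Part (b).} This is the main obstacle, since the bound $|R||S|$ is too weak there. I would count arcs inside $S \cup R$ more carefully, using that $D$ is oriented: any set $A$ spans at most $\binom{|A|}{2}$ arcs. Let $R = RN^+(S)$ and split $S$ into $S \cap R$ and $S \setminus R$, and $R$ into $R \cap S$ and $R \setminus S$. The arcs out of $S$ go, up to $\nu n^2$ error, into $R$. Arcs inside $S\cap R$ number at most $\binom{|S\cap R|}{2}$, and arcs from $S$ to $R\setminus S$ number at most $|S||R\setminus S|$.

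Combine this with the in-degree condition at vertices of $S \setminus R$. Such a vertex has fewer than $\nu n$ in-neighbours in $S$, so it has at least $(3/8+\gamma)n - \nu n$ in-neighbours outside $S$. Symmetrically, count out-arcs from the complement $\overline{R}$ of $R$; vertices of $\overline{R}$ receive few arcs from $S$.

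This yields a system of inequalities in the parameters $|S|/n$, $|R|/n$, $|S\cap R|/n$. The extremal case is the Kelly-type $4$-partition, and the inequalities should force $3/8+\gamma \le 3/8 + O(\nu/\tau)$, a contradiction.

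I expect the delicate step to be choosing the right combination of these counts, since single counts only give thresholds near $1/2$ or $1/3$. Kühn and Osthus's argument does exactly this optimisation. I would first try summing the out-degree count over $S$ and the in-degree count over $\overline{R}$, which produces a quadratic in the sizes whose maximum sits at $3/8$.
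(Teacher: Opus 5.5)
The paper does not prove this lemma; it quotes it from the literature, and part (b) is the part Theorem~\ref{thm:main_oriented} needs. Your part (a) is a correct standard argument. There is one harmless slip: for $|S|>n/2$, every vertex has more than $|S|-(1/2-\gamma)n>\gamma n$ in-neighbours in $S$, not $2\gamma n$, and $\gamma n$ still suffices.

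Part (b), however, is not proved. You list some counts and assert that the resulting system ``should'' force $\delta^0(D)\le 3n/8+o(n)$, deferring the optimisation. Moreover, the one concrete combination you suggest does not work. Write $A=S\cap R$, $B=S\setminus R$, $C=R\setminus S$, $E=V(D)\setminus(S\cup R)$ and $\delta=\delta^0(D)$. Adding the out-degree count over $S$ to the in-degree count over $V(D)\setminus R$, with the orientation bounds, gives only
\[
\delta\,(|S|+|B|+|E|)\le \tbinom{|A|}{2}+|A||B|+|A||C|+|B||C|+|C||E|+|E||B|+\tbinom{|E|}{2}+2\nu n^2 .
\]
Take $|A|=0.7n$ and $|B|=|C|=|E|=0.1n$, which is consistent with $|C|\le|B|$ and $\tau n\le|S|\le(1-\tau)n$. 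Then this yields merely $\delta\le(0.42+2\nu)n$. In an unweighted sum the large part $A$ swamps the information carried by the small parts, so this quadratic does not peak at $3/8$.

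The missing idea is to average over each part separately, so that each part gives a per-vertex inequality.

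(i) Out-degrees in $A$, together with $e(A)\le\tbinom{|A|}{2}$ and $e(S,B\cup E)<\nu n^2$, give $\delta\le|A|/2+|C|+\sqrt{\nu}n$ if $|A|\ge\sqrt{\nu}n$. If $A$ is smaller, use $|R|\ge\delta-\nu n/\tau$ instead.

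(ii) Each vertex of $E$ has fewer than $\nu n$ in-neighbours in $S$. Averaging in-degrees over $E$ with $e(E)\le\tbinom{|E|}{2}$ gives $\delta\le|E|/2+|C|+\nu n$. If $E=\emptyset$, then $|C|=n-|S|\ge\tau n$ forces $B\neq\emptyset$, and any vertex of $B$ gives $\delta<|C|+\nu n$.

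(iii) A vertex of $B$ sends its out-arcs essentially into $A\cup C$ and receives its in-arcs essentially from $C\cup E$. Since $D$ is oriented, each vertex of $C$ is counted at most once. Summing over $B$ gives $|B|\le n-2\delta+2\sqrt{\nu}n$; this holds trivially if $|B|<\sqrt{\nu}n$, since $2\delta<n$.

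(iv) $|C|<|B|+\nu n$.

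Set $\eta=\sqrt{\nu}+\nu/\tau+2\nu$. Add (i) and (ii), use $|A|+|E|=n-|B|-|C|$, and then apply (iv) and (iii):
\[
2\delta\le \tfrac{n}{2}+\tfrac{3|C|-|B|}{2}+\eta n\le \tfrac{n}{2}+|B|+2\eta n\le \tfrac{3n}{2}-2\delta+4\eta n .
\]
So $\delta\le 3n/8+\eta n$, contradicting $\delta\ge(3/8+\gamma)n$ because $\nu\ll\tau\le\gamma$. Without this (or an equivalent) derivation, your part (b) remains a plan rather than a proof.
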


Clearly, Theorem~\ref{thm:main} 
together with part (a) of Lemma~\ref{lem:outex} implies 
 Mycroft and Naia's above-mentioned result~\cite{tassio} on bounded degree oriented spanning trees in digraphs.

\subsection{Organisation} The paper is organised as follows.
Section~\ref{sec:overview} gives an overview of the proof of
Theorem~\ref{thm:main}. Section~\ref{sec:preliminaries}
introduces notation and auxiliary results. Section~\ref{ssec:diregularity}
contains the statements of the Diregularity Lemma and of the Blow-up Lemma.
In Section~\ref{sec:expansion}, we explore the notion of robust expansion and
prove some auxiliary results. Section~\ref{sec:random_walks} focuses on random
walks in digraphs, which are then used in Section~\ref{sec:semi-random} to
build a semi-random algorithm to assign vertices of the tree to clusters of a
reduced digraph. Finally, in Section~\ref{sec:proof_main}, we present the
proof of Theorem~\ref{thm:main}.

\section{Overview}
\label{sec:overview}

For better readability, we first sketch a proof for a variant of Theorem~\ref{thm:main} where $T$ is
only almost-spanning. Then we present the adjustments that are necessary so
the proof works for spanning trees.

\subsection{Proof for almost spanning trees}

We use the Digraph Regularity Lemma.
For an appropriate choice of
constants $\varepsilon\ll d \ll \nu\leq \tau \ll \gamma$, we find an $\varepsilon$-regular
partition $\{V_0,\dots,V_k\}$ of $V(D)$  
(see Section \ref{ssec:diregularity}). The corresponding reduced digraph $R$ 
inherits the minimum semidegree and
expansion of $D$ (see Lemma~\ref{lemma:reduced}).

As it is typical in applications of the Regularity Lemma, we need to find an assignment~${\phi:V(T) \rightarrow V(R)}$ that distributes
the vertices of $T$ in a balanced way among the vertices of~$R$. To find such $\varphi$, we use a randomised approach.
Consider, as a toy example, the case of $T$ being an oriented
path~${(v_1,v_2,\dots,v_n)}$. We start by
choosing $\phi(v_1)\in V(R)$ uniformly at random. Then, for $1 \leq t \leq n-1$, we choose 
$\phi(v_{t+1})$ uniformly at random either from the in- or from the out-neighbourhood of $\phi(v_{t})$, according to the orientation of~$T$. This gives a
random walk in $R$ that follows the pattern of the oriented path. To the best of our knowledge, this idea was first used by Mycroft and Naia~\cite{tassio}
in a slightly different setting. Since we require $\phi$ to be well-balanced, we keep track of the visitation time~$X_t$ of this walk
(i.e, $|\phi^{-1}(X)|$ at time~$t$) for $X\in V(R)$, which we expect to be roughly~$n/k$ once the
random walk mixes into a uniform distribution. 

Now the \textit{cherry property} (see Definition \ref{def:cherry}) becomes important.
It can be viewed as a directed analogue of being connected and non-bipartite in graphs, which are properties known to relate to
mixing of (undirected) random  walks.
It turns out that if $H$ is a regular digraph on $k$ vertices that possesses the
cherry property  then the
variable $X_t$ is w.h.p.~close to a uniform sampling in $V(H)$ as long as $t$ is large enough compared
to $k$ (see Proposition~\ref{proposition:mixing}). 
We also show that any robust expander
with non-vanishing minimum semidegree, as the one from Theorem \ref{thm:main}, has a spanning regular subdigraph having the cherry property (see Proposition~\ref{proposition:regular-cherry-subgraph}). In particular, this yields a regular subdigraph of~$R$ with the cherry property. 
Consequently, if performed on this subdigraph, the random walk mixes and w.h.p.~we obtain a well balanced assignment~$\phi$. 
We then complete the embedding~$T$
by greedily following~$\phi$. The properties of the regular partition guarantee that this greedy embedding works.

Now, if $T$ is an arbitrary oriented tree with bounded maximum degree (not necessarily a path), we modify our randomised approach as follows. We choose an arbitrary root $r$ of~$T$ and fix a top-bottom ordering $v_1,\dots,v_n$ of~$V(T)$. We start by assigning $v_1$ uniformly at random to a vertex of~$V(R)$. Then, for $1 \leq t \leq n-1$, we choose uniformly at random an out-neighbour $V^{+}$ and in-neighbour $V^{-}$ of~$\phi(v_{t})$. We assign all out-neighbours of $v_t$ to $V^{+}$ and all in-neighbours to $V^{-}$.
A martingale analysis shows that we can find a well-balanced assignment $\phi$ using this approach.

\subsection{Proof for  spanning trees}

To embed spanning oriented trees we must overcome three problems.  First, we have to use the Blow-Up lemma in order to fill up the clusters, second, we have to incorporate the exceptional vertices, and third, we need to find a perfectly balanced assignment. We will discuss each of these issues separately.

\subsubsection{Using the Blow-Up Lemma}

The greedy embedding procedure discussed above only works if the number of unused vertices in each cluster $V_i$
is much larger than $\eps|V_i|$, which is no longer the case when $T$ is spanning. To solve this problem we invoke the Blow-up Lemma, in a version due
to Csaba (see Lemma \ref{lemma:blow_up}).

To be able to apply the Blow-up Lemma, we need to make two modifications to our approach.
First, we find a directed Hamilton cycle $C$ in $R$, which exists by Theorem~\ref{thm:hamilton},
and relocate some vertices to $V_0$ to make the arcs of $C$ super-regular. Second,
we must ensure that many designated special arcs of $T$ are mapped by $\phi$ to the arcs of $C$ in
a balanced way. We call this the \emph{absorbing property}.
These special arcs are chosen according to the structure of $T$.
We distinguish three cases, depending
on whether $T$ has many leaves, many constant length {\it bare paths} (i.e.~paths on degree-2 vertices of $T$) of the same orientation,
or many {\it switches} (sources or sinks of degree two). We only explain here the many-paths case, as it contains the main ideas of the proof.
Also, we will assume that each of these paths starts with an out-vertex.
In this case, the special arcs are the arcs of selected bare paths.

To obtain an assignment with the absorbing property, we replace the random greedy assignment used in the almost-spanning case
with a {\it semi-random assignment}~(see Definition~\ref{def:semi_random}),
which is defined as follows. We perform the random greedy process described above, 
only that  whenever
the starting vertex of a special path is assigned, we deterministically assign the rest of the path only using edges of $C$. 
In Proposition~\ref{prop:sr_assignment},
we show that w.h.p.~this semi-random assignment $\phi$ satisfies the absorbing property and also gives each cluster 
roughly the same amount of vertices of~$T$.

Moreover, we can show that $\phi$ assigns roughly the same number of
starting vertices of the special paths to each cluster of $R$. This  property will be important in the next subsection. 

\subsubsection{Incorporating the exceptional vertices}

The second issue we need to address in order to embed spanning trees is that we must incorporate
the exceptional vertices from $V_0$ into~$\phi$. This will be done in Section~\ref{ssec:exceptional_vertices}, using
skewed-traverses (see Figure~\ref{fig:skewed}), which were introduced by Kelly~\cite{kelly}. For each exceptional
vertex $v \in V_0$ we find two clusters~$V_{i^{+}}$ and~$V_{i^-}$ such that $v$ sends many arcs to
$V_{i^{+}}$ and receives many arcs from $V_{i^-}$. These clusters exists because of the high minimum semidegree of $D$. 

Moreover, as established in the previous subsection, many special paths start at $V_{i^-}$. Let $P=(v_0,\dots,v_{\ell-1})$ be one of them. 
We incorporate $v$ by reassigning $v_1$ to $v$ and $v_2$ to~$V_{i^{+}}$. However,
note that this modification is only valid if $V_{i^{-}+3}$ is in the out-neighbourhood of~$V_{i^{+}}$; otherwise the
reassignment would violate the arc-preserving property of $\phi$. Since this condition need not always hold,
we use skewed-traverses to adjust the assignment of
some vertices of $P$ (see Figure~\ref{fig:incorp_bare}). This method
works only when $\ell$ is a multiple of $|V(R)|$ and, therefore, we can incorporate only $\xi n \ll \eps n$ vertices (while $V_0$ can have up to $\eps n$ vertices).

To solve this issue, we use an idea that has also appeared in~\cites{kelly,kelly2,taylor}. We split $T$ into two linear
subtrees $T_A$ and $T_B$ (see Lemma~\ref{lemma:split_tree}). We also split $D$
into two subdigraphs $D_{A'}$ and $D_{B'}$ such that both remain robust expanders with linear
minimum semidegree (see Lemma~\ref{lemma:partition}). We then apply the Diregularity Lemma
to $D_B$ with parameter $\eps$, obtaining an exceptional set $V_0$ with at most $\eps n$ vertices.
Next, we apply the Diregularity Lemma to $D[A \cup V_0]$, this time with parameter $\xi$ instead
of $\eps$, yielding an exceptional set $U_0$ with at most $\xi n$ vertices. Finally,
we define $D_A = D_{A'} \setminus U_0$ and $D_B = D_{B'}\cup U_0$. In this way, and  at the cost of having to connect $T_A$
and $T_B$ later, we have no exceptional vertices in $D_A$ and only a very small set $U_0$ of exceptional vertices in $D_B$, which
we can then incorporate using the method of skewed-traverses. The steps outlined above are carried out
in Sections~\ref{ssec:cases}, \ref{ssec:preparing_D},~\ref{ssec:exceptional_vertices}.

\subsubsection{Obtaining a perfectly balanced assignment}
Finally, the last issue in embedding spanning trees is obtaining a perfectly balanced assignment $\phi$.
We do this by modifying $\phi$ via skewed-traverses. This balancing step is very similar to the
incorporation of exceptional vertices and is presented in Section~\ref{ssec:balanced_assignment}.
Combined with the properties of~$\phi$, this procedure allows us to balance~$\phi$ without
significantly disturbing the absorbing property.

With all issues resolved, we obtain a perfectly balanced assignment $\phi$ that incorporates all the exceptional
vertices and still satisfies the absorbing property. We can now apply the Blow-up Lemma
to embed $T_B$ into $D_B$. Now, we need to embed $T_A$ into $D_A$ and connect the two pieces. 
We proceed in the same way as for $T_B$.
We find a perfectly balanced assignment $\phi_A$ of $T_A$ to the
corresponding reduced graph of $D_A$.
The only difference is that we ensure the root of $T_A$ is assigned
to a cluster that receives (or sends) many arcs to the vertex where the root of $T_B$ was embedded.
We can then apply the Blow-up Lemma to embed $T_A$ into $D_A$, finishing the embedding of $T$.

\section{Preliminaries}
\label{sec:preliminaries}

\subsection{On sequences of numbers}
\label{ssec:numbers}

Given a number $n \in \NN$, we denote by $[n]$ 
the set~${\{1,\dots,n\}}$. We will
use the following equality. 
\begin{prop}
  \label{prop:cauchy}
  For any two sequences of numbers $(a_i)_{i\in{[\ell]}}$
  and~$(b_i)_{i\in[\ell]}$ we have
  \begin{equation*}
  	\label{eq:Cauchy-Schwartz}
  	\left( \sum_{i\in[\ell]} a_i b_i \right)^2 = \sum_{i\in[\ell]}a_i^2\sum_{i\in[\ell]}b_i^2 - \frac{1}{2} \sum_{i,j\in[\ell]}\big(a_ib_j - a_jb_i \big)^2.
  \end{equation*} 
\end{prop}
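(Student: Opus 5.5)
This is Lagrange's identity, and I would prove it by expanding both sides as double sums over $(i,j)\in[\ell]^2$ and comparing them term by term. No earlier result is needed.

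\textbf{Step 1: expand each piece.} The left-hand side is
\[
\Bigl(\sum_{i} a_i b_i\Bigr)^2=\sum_{i,j\in[\ell]} a_ib_ia_jb_j .
\]
The first term on the right is
\[
\sum_i a_i^2\sum_j b_j^2=\sum_{i,j\in[\ell]} a_i^2 b_j^2 .
\]
For the correction term, expand $(a_ib_j-a_jb_i)^2=a_i^2b_j^2+a_j^2b_i^2-2a_ib_ja_jb_i$ and sum over all ordered pairs $(i,j)$.

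\textbf{Step 2: symmetrize.} Swapping the roles of $i$ and $j$ shows that
\[
\sum_{i,j}a_j^2b_i^2=\sum_{i,j}a_i^2b_j^2 .
\]
Therefore
\[
\frac12\sum_{i,j}\bigl(a_ib_j-a_jb_i\bigr)^2=\sum_{i,j}a_i^2b_j^2-\sum_{i,j}a_ib_ia_jb_j .
\]

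\textbf{Step 3: conclude.} Subtracting the expression from Step 2 from $\sum_{i,j}a_i^2b_j^2$ leaves exactly $\sum_{i,j}a_ib_ia_jb_j$, which is the left-hand side.

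There is no real obstacle here. The only point needing care is bookkeeping: the sum runs over ordered pairs, including the diagonal $i=j$, where the summand vanishes. This is why the factor $\tfrac12$ is exactly right and no separate treatment of $i<j$ is required.
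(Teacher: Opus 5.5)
Your proof is correct: expanding both sides as sums over ordered pairs $(i,j)\in[\ell]^2$ and using the $i\leftrightarrow j$ symmetry to merge the two square terms is the standard verification of Lagrange's identity. The paper states this proposition without proof as a known identity, so your direct expansion is essentially the argument it implicitly relies on.
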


\subsection{On probability and concentration inequalities}
\label{ssec:probability}

We say that an event depending on $n$ occurs \defi{with high probability} if its
probability tends to $1$ as $n$ goes to infinity. Given a random variable $X$,
we write $\Exp[X]$ for its expected value. We say that $X$ follows a binomial
distribution with parameters $n$ and $p$ if it counts the number of successes
in~$n$ independent experiments, each of which has probability $p$ of success.
We use the following Chernoff-type bound (see, for instance,
Corollary 2.3 of~\cite{jlr}).

\begin{lemma}[Chernoff's bound]
  \label{lemma:chernoff}
  Let $n \in \NN$, $0 < p < 1$, and $0 < \eps \leq  3/2$.
  If $X$ follows a binomial distribution with parameters $n$ and $p$, then
  \begin{equation}
    \Pr(|X - \Exp[X]| \geq \eps\Exp[X]) \leq 2\exp\left(-\frac{\eps^{2}}{3}\Exp[X]\right).
  \end{equation}
\end{lemma}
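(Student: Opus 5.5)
We prove this by the standard exponential moment method, treating the upper and lower tails separately and combining them with a union bound, which accounts for the factor $2$. Write $\mu = \Exp[X] = np$ and $X = \sum_{i\in[n]} X_i$ with independent Bernoulli$(p)$ variables $X_i$.

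\emph{Upper tail.} For any $\lambda > 0$, Markov's inequality applied to $e^{\lambda X}$ gives
\begin{equation*}
\Pr(X \geq (1+\eps)\mu) \leq e^{-\lambda(1+\eps)\mu}\,\Exp[e^{\lambda X}] = e^{-\lambda(1+\eps)\mu}\,(1-p+pe^{\lambda})^n \leq \exp\big(\mu(e^\lambda - 1) - \lambda(1+\eps)\mu\big).
\end{equation*}
The middle equality uses independence, and the last step uses $1+x \leq e^x$. Choosing $\lambda = \log(1+\eps)$ yields $\Pr(X \geq (1+\eps)\mu) \leq \exp(-\mu\,\varphi(\eps))$, where $\varphi(x) = (1+x)\log(1+x) - x$.

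It then remains to show $\varphi(x) \geq \frac{x^2}{2(1+x/3)}$ for $x \geq 0$. Both sides vanish at $0$, so I would compare derivatives, possibly twice, to reduce this to an elementary polynomial inequality. Given that bound, the case $\eps \leq 3/2$ follows because $2(1+\eps/3) \leq 3$ is equivalent to $\eps \leq 3/2$, so the exponent is at least $\eps^2\mu/3$. This is exactly where the hypothesis $\eps \leq 3/2$ enters.

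\emph{Lower tail.} If $\eps > 1$, the event $X \leq (1-\eps)\mu < 0$ is empty, so there is nothing to prove. For $\eps \leq 1$, apply Markov to $e^{-\lambda X}$ in the same way. This gives $\Pr(X \leq (1-\eps)\mu) \leq \exp(-\mu\,\varphi(-\eps))$, with the same function $\varphi$ evaluated at $-\eps$. A Taylor argument shows $\varphi(-x) \geq x^2/2$ for $0 \leq x \leq 1$, since the second derivative of $\varphi(-x)$ is $1/(1-x) \geq 1$. This bound dominates $\eps^2/3$.

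Adding the two tail bounds gives the stated inequality. The only non-routine step is the elementary inequality $\varphi(x) \geq x^2/(2+2x/3)$, and I expect the derivative comparison to handle it directly.
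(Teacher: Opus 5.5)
Your argument is correct and is essentially the route the paper relies on: the paper gives no proof of its own but cites Corollary~2.3 of Janson--{\L}uczak--Ruci\'nski, which is derived by this same exponential-moment argument from the bounds $\varphi(x)\ge x^2/(2(1+x/3))$ and $\varphi(-x)\ge x^2/2$. The step you flagged does go through by comparing second derivatives: $\varphi''(x)=1/(1+x)$ dominates $27/(3+x)^3$, the second derivative of $x^2/(2(1+x/3))$, because this reduces to $(3+x)^3\ge 27(1+x)$.
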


Let $(A_i)_{i \in [k]}$ be a collection of increasing events, and
define $X = \sum_{i \in [k]}\mathds{1}_{A_i}$. If these events are mutually
independent,
we can apply a Chernoff-type bound to estimate the lower tail of $X$. When the
events are `mostly' independent, 
we use the following Janson-type
bound (see, for instance, Theorem 2.14 of~\cite{jlr}). 

\begin{lemma}[Janson's inequality]\label{lemma:janson}
  Let $(A_i)_{i \in [k]}$ be a collection of increasing events.
	Let 
	\[
    Z =  \sum_{i\in [k]} \mathds{1}_{A_i} \qquad \text{and} \qquad \Delta = \sum_{i \in [k]} \sum_{i \sim j} \mathbb{P}(A_i \cap A_j),
  \]
	where the sum is over non-ordered pairs $\{i,j\}$ of dependent events,
  with $i \neq j$. Then 
	\[
    \mathbb{P}\left(Z \le \Exp[Z]-t \right)\le \exp \left(- \dfrac{t^2}{2(\Exp[Z]+\Delta)}\right).
  \]
\end{lemma}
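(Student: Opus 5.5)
We read the statement in the framework of \cite{jlr}, Theorem~2.14, which is how it is used later. There is an independent random subset $\Gamma_p$ of a finite ground set. Each $A_i$ is the event $\{S_i \subseteq \Gamma_p\}$ for a fixed set $S_i$. We write $i \sim j$ when $i\neq j$ and $S_i \cap S_j \neq \emptyset$. Then $A_i$ is independent of the family $\{A_j : j \not\sim i,\ j\neq i\}$.

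We read $\Delta$ literally as the double sum $\sum_{i\in[k]}\sum_{j\sim i}\mathbb P(A_i\cap A_j)$. For each $i$ this sums over all $j$ dependent on $i$, so each unordered pair is counted from both of its endpoints. Write $\mu=\Exp[Z]$ and set
\[
\bar\Delta := \sum_{i\in[k]} \sum_{j \sim i \text{ or } j=i} \mathbb P(A_i\cap A_j) = \mu + \Delta .
\]
The plan is the usual Laplace-transform argument. We show that $\Psi(s)=\Exp[e^{-sZ}]$ satisfies $-\log\Psi(s)\ge \mu s - s^2\bar\Delta/2$ for all $s\ge 0$, and then apply Markov's inequality.

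\textbf{Step 1 (differential inequality).} We have $-\Psi'(s)=\sum_i \Exp[\mathds 1_{A_i}e^{-sZ}] = \sum_i \mathbb P(A_i)\,\Exp[e^{-sZ}\mid A_i]$. Split $Z=Y_i+Z_i$, where $Y_i$ counts the $A_j$ with $j\sim i$ or $j=i$, and $Z_i$ counts the remaining $A_j$.

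Conditioning on $A_i$ amounts to fixing the elements of $S_i$ to be present. The remaining coordinates stay independent, so the conditional measure is again a product measure. Under it, $e^{-sY_i}$ and $e^{-sZ_i}$ are both decreasing functions. Harris' inequality then gives
\[
\Exp[e^{-sZ}\mid A_i]\ \ge\ \Exp[e^{-sY_i}\mid A_i]\,\Exp[e^{-sZ_i}\mid A_i].
\]
Since $Z_i$ does not depend on the coordinates in $S_i$, we get $\Exp[e^{-sZ_i}\mid A_i]=\Exp[e^{-sZ_i}]\ge \Psi(s)$, because $Z_i\le Z$. By Jensen's inequality, $\Exp[e^{-sY_i}\mid A_i]\ge \exp(-s\,\Exp[Y_i\mid A_i])$.

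Combining these bounds, and then applying Jensen once more with weights $\mathbb P(A_i)/\mu$ to the convex function $x\mapsto e^{-sx}$, we obtain
\[
-\frac{\Psi'(s)}{\Psi(s)} \ \ge\ \sum_i \mathbb P(A_i)\, e^{-s\Exp[Y_i\mid A_i]} \ \ge\ \mu \exp\Big(-\frac{s}{\mu}\sum_i \mathbb P(A_i)\Exp[Y_i\mid A_i]\Big) = \mu e^{-s\bar\Delta/\mu}.
\]
Here we used $\sum_i \mathbb P(A_i)\Exp[Y_i\mid A_i]=\bar\Delta$.

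\textbf{Step 2 (integration and Markov).} Integrating from $0$ to $s$ gives
\[
-\log\Psi(s)\ \ge\ \frac{\mu^2}{\bar\Delta}\big(1-e^{-s\bar\Delta/\mu}\big)\ \ge\ \mu s-\frac{s^2\bar\Delta}{2},
\]
using $1-e^{-x}\ge x-x^2/2$. Markov's inequality applied to $e^{-sZ}$ yields
\[
\mathbb P(Z\le\mu-t)\le e^{s(\mu-t)}\Psi(s)\le \exp\big(-st+s^2\bar\Delta/2\big).
\]
Choosing $s=t/\bar\Delta$ gives $\exp(-t^2/(2\bar\Delta))=\exp\big(-t^2/(2(\Exp[Z]+\Delta))\big)$, as claimed.

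The main obstacle is the correlation step in Step~1. It is the only place where we need both that the events are increasing and that the conditional space is still a product space; this is why we work with events of the form $\{S_i\subseteq\Gamma_p\}$. The other delicate point is bookkeeping: the term $\mu+\Delta$ must equal exactly the diagonal plus the dependent off-diagonal contributions of $\sum_i \mathbb P(A_i)\Exp[Y_i\mid A_i]$. This holds precisely because the double sum defining $\Delta$ counts each dependent pair once from each endpoint.
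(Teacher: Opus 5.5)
Your argument is correct, and it is exactly the standard proof of Theorem~2.14 in \cite{jlr}: Harris' inequality under the conditional product measure, Jensen twice to get $-(\log\Psi)'(s)\ge \mu e^{-s\bar\Delta/\mu}$, integration, then Markov. That theorem is what the paper cites in place of a proof. Your reading of $\Delta$ as the ordered double sum, so that $\Exp[Z]+\Delta$ is the $\bar\Delta=\lambda+2\Delta$ of \cite{jlr}, is the right one, since with unordered pairs the bound would be too strong; and your choice $s=t/\bar\Delta$ tacitly uses $t\ge 0$, which the statement needs anyway.
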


A sequence of random variables $(X_i)_{i=0}^{k}$ is a \defi{martingale} if
$\Exp[X_{i} | X_0,\dots,X_{i-1}] = X_{i-1}$, for every $i \in [k]$. We say that
a martingale $(X_i)_{i=0}^{k}$ is \defi{$c$-Lipschitz} if~${|X_i - X_{i-1}| \leq c}$,
for all~${i \in [k]}$. We will use the following Azuma-type bound
for martingales (see, for instance, Theorem 2.25 of~\cite{jlr}). 
	
\begin{lemma}[Azuma's inequality]\label{lemma:Azuma}
  Let $c>0$ and let $(Y_i)_{i=0}^k$ be a $c$-Lipschitz martingale.
  Then, for any $\lambda>0$, we have
	\[
    \P(|Y_k-Y_{0}|\geq \lambda)\leq 2\exp\left(-\frac{\lambda^2}{2kc^2}\right).
  \]
\end{lemma}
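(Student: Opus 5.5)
This is the standard Azuma--Hoeffding inequality, and I would prove it by the exponential moment method. Set $D_i = Y_i - Y_{i-1}$ for $i\in[k]$, so that $Y_k - Y_0 = \sum_{i\in[k]} D_i$. The martingale property gives $\Exp[D_i \mid Y_0,\dots,Y_{i-1}] = 0$, and the Lipschitz condition gives $|D_i|\le c$. By symmetry it suffices to show
\[
\P(Y_k - Y_0 \ge \lambda)\le \exp\bigl(-\lambda^2/(2kc^2)\bigr).
\]
The lower tail then follows by applying the same bound to the $c$-Lipschitz martingale $(-Y_i)_{i=0}^k$, and a union bound produces the factor $2$.

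The key step is Hoeffding's lemma in conditional form. For $h>0$ I would show that
\[
\Exp\bigl[e^{hD_i}\mid Y_0,\dots,Y_{i-1}\bigr]\le \cosh(hc)\le e^{h^2c^2/2}.
\]
Since $x\mapsto e^{hx}$ is convex, for $x\in[-c,c]$ we have
\[
e^{hx}\le \frac{c-x}{2c}e^{-hc}+\frac{c+x}{2c}e^{hc}.
\]
Taking the conditional expectation and using that $D_i$ has conditional mean zero leaves $\cosh(hc)$. Comparing the Taylor series termwise, using $(2m)!\ge 2^m m!$, gives $\cosh(y)\le e^{y^2/2}$.

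Next I would iterate using the tower property. Conditioning on $Y_0,\dots,Y_{k-1}$ and peeling off one increment at a time gives
\[
\Exp\bigl[e^{h(Y_k-Y_0)}\bigr]\le e^{kh^2c^2/2}.
\]
Markov's inequality then yields
\[
\P(Y_k-Y_0\ge\lambda)\le e^{-h\lambda+kh^2c^2/2}.
\]
Optimising at $h=\lambda/(kc^2)$ gives the claimed bound $\exp(-\lambda^2/(2kc^2))$.

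The only step with any substance is the conditional Hoeffding bound. The point to get right there is the convexity argument: it requires only that each increment is bounded in $[-c,c]$ and has conditional mean zero, not that the increments are independent. Everything else is routine bookkeeping.
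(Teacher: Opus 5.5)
Your proof is correct and follows the standard exponential-moment argument. The conditional Hoeffding bound comes from convexity and needs only bounded, conditionally mean-zero increments; you then use the tower property, Markov's inequality with $h=\lambda/(kc^2)$, and apply the same bound to $(-Y_i)$ with a union bound. The paper gives no proof of this lemma and only cites Theorem 2.25 of Janson, \L{}uczak and Ruci\'nski~\cite{jlr}, which is the standard Azuma--Hoeffding inequality usually proved this way.
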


\subsection{On multigraphs}
\label{sec:graphs}

A \defi{multigraph} is a graph that allows multiple edges between the same pair of vertices.
The \defi{multiplicity} of a multigraph $G$, denoted by $\mu(G)$, is the maximum number
of edges joining the same pair of vertices.
Given an integer $k$, a \defi{$k$-edge colouring} of a multigraph $G$ is
a map $c:E(G) \rightarrow [k]$ with $c(e) \neq c(f)$ for any adjacent edges $e,f \in E(G)$.
The \defi{chromatic index} of a multigraph $G$, denoted by $\chi'(G)$, is the smallest integer $k$
for which a $k$-edge colouring of $G$ exists.
We will use Vizing's Theorem~\cite{vizing}.

\begin{thm}[Vizing's Theorem]
    \label{thm:vizing}
    For every loopless multigraph $G$, we have 
    \[
        \chi'(G) \leq \Delta(G) + \mu(G).
    \]
\end{thm}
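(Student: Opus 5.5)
The plan is to prove Theorem~\ref{thm:vizing} by induction on $|E(G)|$, using the standard fan and Kempe-chain (alternating path) argument, adapted to multigraphs. Set $k=\Delta(G)+\mu(G)$. The base case $|E(G)|=0$ is trivial. For the inductive step, pick an edge $e_0=xy_0$, and by induction fix a proper $k$-edge colouring $c$ of $G-e_0$. For a vertex $v$, let $M(v)\subseteq[k]$ be the set of colours missing at $v$. In $G-e_0$ every vertex $v$ has degree at most $\Delta(G)$, so $|M(v)|\ge \mu(G)$. For the endpoints $x,y_0$ of $e_0$ the degree is at most $\Delta(G)-1$, so $|M(x)|,|M(y_0)|\ge \mu(G)+1$. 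If $M(x)\cap M(y_0)\neq\emptyset$, we colour $e_0$ with a common missing colour and are done.

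Otherwise, we build a maximal \emph{fan} at $x$: a sequence of edges $e_0=xy_0, e_1=xy_1,\dots,e_m=xy_m$, all distinct and incident to $x$, such that for each $i\ge1$ the colour $c(e_i)$ lies in $M(y_j)$ for some $j<i$. The endpoints $y_i$ need not be distinct; parallel edges are allowed. The key observation is this: if some $y_i$ has a colour $\alpha\in M(x)\cap M(y_i)$, we can \emph{shift} the fan. We recolour $e_i$ with $\alpha$, then give $e_{j}$ the old colour of $e_{i}$ along the chain of predecessors back to $e_0$. This yields a proper colouring of $G$, so we may assume $M(x)\cap M(y_i)=\emptyset$ for all $i$.

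The main step, and the expected obstacle, is the counting that forces a collision using the multiplicity bound. Let $Y$ be the set of distinct vertices among $y_0,\dots,y_m$. By maximality of the fan, every colour in $\bigcup_i M(y_i)$ appears on some fan edge at $x$; otherwise the fan could be extended, and such a colour is not in $M(x)$ by the assumption above. For each $y\in Y$, at most $\mu(G)$ edges join $x$ and $y$, and one of them (for $y=y_0$) is the uncoloured $e_0$. Summing $|M(y)|$ over $y\in Y$ gives at least $\mu(G)|Y|+1$, while the number of coloured fan edges is at most $\mu(G)|Y|-1$. So, provided the missing sets over $Y$ are pairwise disjoint, there would be more distinct missing colours than fan edges carrying them, which is a contradiction. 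Hence two distinct fan vertices $y_i\ne y_j$, say with $i<j$ in fan order, share a missing colour $\beta$.

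Finally, we take $\alpha\in M(x)$ and consider the $\alpha/\beta$ Kempe chain, i.e.\ the maximal alternating path, starting at $x$. It has at most two ends, so it cannot end at both $y_i$ and $y_j$. We swap $\alpha$ and $\beta$ on the chain avoiding one of them, say $y_\ell$. After the swap, $\alpha$ becomes missing at $y_\ell$. We must check that the truncated fan up to $y_\ell$ remains a valid fan. This requires choosing $\ell$ carefully: take $y_i$ as the first fan vertex missing $\beta$, so that $\beta$ is not used on earlier fan edges, and treat separately the case where the chain ends at $y_j$. With this check done, we are back in the shifting case, and colouring $e_0$ completes the induction. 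This bookkeeping of the fan after the Kempe swap is the delicate point, and I would follow the Ehrenfeucht--Faber--Kierstead style argument to handle it cleanly.
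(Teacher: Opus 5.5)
\paragraph{Comparison with the paper.} The paper does not prove this statement. It quotes Vizing's theorem and uses it once, in the proof of Lemma~\ref{lemma:regular}, to split $A(F)$ into $d'\le \Delta(F)+2\le 2\Delta(F)$ matchings. For that use the full theorem is more than needed. The greedy bound $\chi'(G)\le 2\Delta(G)-1$ already gives $d'\le 2\Delta(F)$: each edge meets at most $2\Delta(G)-2$ others, counted with multiplicity. The rest of that proof is unchanged. You instead prove the theorem itself by the standard multifan and Kempe-chain argument, and the outline is sound. The induction set-up is correct, and so is the shifting along a chain of predecessors. The counting is also right: $\sum_{y\in Y}|M(y)|\ge \mu|Y|+1$, while by maximality each colour of $\bigcup_{y\in Y}M(y)$ sits on a distinct one of at most $\mu|Y|-1$ coloured fan edges.

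\paragraph{Completing the deferred step.} The step you defer closes in a few lines, but two points of your wording need fixing.

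\emph{Which component to swap.} Swap on the $\alpha/\beta$-component of $y_\ell$, which avoids $x$. Do not swap on the path starting at $x$: it begins with the $\beta$-edge at $x$, which may be a fan edge. Every fan vertex sees $\alpha$, so no fan edge has colour $\alpha$. A swap on a component avoiding $x$ therefore changes no fan-edge colour and alters missing sets only within $\{\alpha,\beta\}$. The only risk is that a $\beta$-coloured fan edge loses its justification.

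\emph{The case split.} Let $i$ be the least index with $\beta\in M(y_i)$, and let $y_j\ne y_i$ be another fan vertex missing $\beta$. A $\beta$-coloured fan edge $e_s$ needs a $\beta$-missing vertex of index $<s$, so $s>i$.
\begin{itemize}
\item If the component of $y_i$ avoids $x$, swap it. The prefix $e_0,\dots,e_i$ contains no $\beta$-coloured edge, so it stays a fan, and now $\alpha\in M(x)\cap M(y_i)$.
\item Otherwise the path from $x$ ends at $y_i$. This is the case needing separate treatment, not the case where it ends at $y_j$ as you wrote. Now the component of $y_j$ avoids both $x$ and $y_i$; swap it. Then $y_i$ still misses $\beta$ and precedes every $\beta$-coloured fan edge, so it still justifies all of them. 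Hence the prefix up to the first occurrence of $y_j$ remains a fan with $\alpha\in M(x)\cap M(y_j)$.
\end{itemize}
In both cases shifting finishes the proof.
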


\subsection{On digraphs}
\label{ssec:notation}

For a digraph $D$, we denote by $V(D)$ its vertex set 
and by $A(D)$ its arc set. If $(u,v) \in A(D)$,
we say that $v$ is an \defi{out-neighbour} of $u$, and~$u$
is an \defi{in-neighbour} of $v$. For ${v \in V(D)}$,
the set~$N^{+}(v)$ of out-neighbours of $v$ is called its \defi{out-neighbourhood}. The
\defi{out-degree} of~$v$ is $\deg^{+}(v) = |N^{+}(v)|$.
The \defi{in-neighbourhood}  $N^{-}(v)$ and the \defi{in-degree} $\deg^{-}(v)$ of $v$ are
defined analogously. The \defi{minimum semidegree} $\semidegree(D)$ of $D$ 
is the minimum over all the in- and all the out-degrees of the vertices. A \defi{sink} in a digraph is a vertex with out-degree $0$, while
a \defi{source} is a vertex with in-degree~$0$. 
We will use the symbols~$+$ and~$-$ to refer to the out- and in-related notions, respectively.
If~${X \subseteq V(D)}$, then for every vertex $v \in V(D)$ and $\diamond \in \{+,-\}$,
we set $N^{\diamond}(v,X) = N^{\diamond}(v)\cap X$.
We say  $D$ is~\defi{$d$-regular} if~$\deg^{\diamond}(v) = d$, for
all~${v \in V(D)}$ and all~${\diamond \in \{+,-\}}$.

The \defi{underlying multigraph}
of a digraph is a graph obtained by replacing each arc by an edge.
We write $\Delta(G)$ for the maximum degree of its underlying multigraph. 
An \defi{oriented} graph is a digraph that has no pair of vertices connected by arcs
in both directions. Note that the underlying multigraph of an oriented graph has multiplicity one, i.e., it is a graph.
In this case we simply refer to it as the \defi{underlying graph}.
An oriented graph is \defi{antidirected} if each vertex is either a source or a sink.

An \defi{oriented path} of length $k$ in $D$
is a sequence of distinct vertices~$(v_1,\dots,v_k)$ such that for
every $i \in [k-1]$ 
at least one of the arcs~$(v_i,v_{i+1})$ or~$(v_{i+1},v_i)$ exists in $D$.
Two oriented paths in $D$ have the \defi{same orientation} if the pattern
of arc directions along the paths is the same, up to reversing the order
of the vertices. 
A \defi{directed path} of length~$k$ in~$D$
is an oriented path where all arcs follow the same direction.
A \defi{directed cycle} of length $k$ is a sequence
of distinct vertices $(v_1,\dots,v_k)$ such that $(v_i,v_{i+1}) \in A(D)$, and
also~${(v_k,v_1) \in A(D)}$.
The \emph{$k$th power of a directed cycle}~$C$, denoted by $C^{k}$,
is the digraph obtained by adding
an arc from $v_i$ to $v_j$ whenever~$v_j$ is one of the next
$k$ vertices following~$v_i$ along~$C$.
An \defi{oriented Hamiltonian cycle} in a digraph $D$ on $n$ vertices is a sequence of 
vertices~${(v_1,\dots,v_n, v_{n+1}=v_1)}$ such that for every~${i \in [n]}$, either $(v_i, v_{i+1})$
or $(v_{i+1}, v_i)$ is an arc of $D$. A \defi{directed Hamilton cycle} in $D$
is an oriented Hamilton cycle where $(v_i,v_{i+1})$ is an arc of $D$,
for all $i \in [n]$.
An \defi{antidirected Hamilton cycle} is an oriented cycle where
the arcs alternate directions.

\subsection{On oriented trees}
\label{ssec:trees}
As usual, if $(T,r)$ is a rooted tree and  $uv \in E(T)$, with $u$ being closer to $r$
than $v$ is, then we say that~$v$ is a \defi{child} of $u$, and $u$ is
the \defi{parent} of~$v$.  
We work with the standard characterization of trees by the number of leaves or
bare paths. A \defi{bare path} in a graph is a path with the property
that each of its inner vertices has degree two. 
\begin{lemma}[{\cite[Lemma 2.1]{leafy-bare}}]
  \label{lemma:krivelevich}
  Let $k,n>2$ be integers. Every $T$  tree on $n$ vertices has at least $n/4k$
  leaves or a collection of $n/4k$ vertex-disjoint bare paths of length $k$.
\end{lemma}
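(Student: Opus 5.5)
The statement to prove is Lemma~\ref{lemma:krivelevich}: every tree on $n$ vertices has at least $n/4k$ leaves or $n/4k$ vertex-disjoint bare paths of length $k$. The plan is a counting argument on the tree obtained by suppressing degree-two vertices. Let $T$ have $L$ leaves and suppose $L < n/4k$. Write $V_1, V_2, V_{\ge 3}$ for the sets of vertices of degree $1$, $2$ and at least $3$. The handshake identity $\sum_v (\deg v - 2) = -2$ gives
\[
|V_{\ge 3}| \le \sum_{v\in V_{\ge 3}}(\deg v - 2) = L - 2 < L .
\]
Hence $|V_1 \cup V_{\ge 3}| < 2L < n/2k$, so almost all vertices lie in $V_2$.

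Next, contract every maximal path whose inner vertices lie in $V_2$ into a single edge. This gives a tree $T'$ on the vertex set $V_1 \cup V_{\ge 3}$, so $T'$ has fewer than $2L$ edges. We may assume $T$ is not itself a path: otherwise $L=2$, and the path on $n$ vertices contains $\lfloor (n-1)/k\rfloor \ge n/4k$ disjoint subpaths of length $k$, since $n>2$ and $n\ge 4k\cdot 2$ follows from $L<n/4k$. Each edge $e$ of $T'$ corresponds to a bare path $P_e$ of $T$. These paths $P_e$ are internally disjoint, and their interiors partition $V_2$. If $P_e$ has $m_e$ inner vertices, then it contains $\lfloor (m_e+1)/(k+1)\rfloor$ vertex-disjoint subpaths of length $k$ (counting vertices on the edges of $P_e$) that avoid its endpoints. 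To keep disjointness across different $P_e$, we only use subpaths consisting of inner vertices of $P_e$. These are automatically bare paths, since all their vertices have degree $2$ in $T$. So $P_e$ contributes at least $\lfloor m_e/(k+1)\rfloor \ge m_e/(k+1) - 1$ of them.

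Summing over the fewer than $2L$ edges of $T'$ gives at least
\[
\frac{|V_2|}{k+1} - 2L \;>\; \frac{n - n/2k}{k+1} - \frac{n}{2k}
\]
disjoint bare paths of length $k$. The only real step is checking that this is at least $n/4k$; this is where the exact meaning of ``length $k$'' matters. If length counts vertices, subpaths have $k$ vertices and we divide by $k$ rather than $k+1$. The bound becomes $(n - n/2k)/k - n/2k = n/k - n/2k^2 - n/2k$, which is at least $n/4k$ once $k\ge 2$, since $1 - 1/2k - 1/2 \ge 1/4$. If length counts edges, dividing by $k+1$ is tighter. In that case I would save the lost additive terms by allowing subpaths to use the endpoints of $P_e$ that are leaves, or by using the sharper bound $|V_1\cup V_{\ge3}|\le 2L-2$. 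With the vertex-count convention used in this paper (``oriented path of length $k$'' is a sequence of $k$ vertices), the first computation already suffices.
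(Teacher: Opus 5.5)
Under the convention you adopt (length $=$ number of vertices, as in the paper's definition of oriented paths), your argument is correct. It is essentially the standard proof from Krivelevich's paper, which this paper only cites without proof. That proof bounds $|V_{\ge 3}|\le L-2$ via $\sum_v(\deg v-2)=-2$, contracts the maximal bare paths to a tree $T'$ with fewer than $2L$ edges, and chops each interior into pieces. (Your separate treatment of the case that $T$ is a path is unnecessary, since then $T'$ is a single edge.)

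Your closing remark about the edge convention, however, is wrong. That is the convention of the source, and the hypothesis $k>2$ is calibrated to it. With the estimate $\lfloor m_e/(k+1)\rfloor\ge m_e/(k+1)-1$, the computation closes only for $k\ge 5$. Neither of your proposed repairs rescues $k=3$, where $n/12$ paths are needed:
\begin{itemize}
\item using $|V_1\cup V_{\ge 3}|\le 2L-2$ still guarantees only about $n/24$ paths;
\item additionally letting the paths end at leaves guarantees only about $n/16$.
\end{itemize}

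The missing point is that the loss per maximal bare path is at most $k/(k+1)$, not $1$: $\lfloor m/(k+1)\rfloor\ge (m-k)/(k+1)$, which also holds when $m=0$. Write $S=V_1\cup V_{\ge 3}$, so $|S|\le 2L-2$ and there are $|S|-1$ maximal bare paths. Summing gives
\[
\frac{(n-|S|)-k(|S|-1)}{k+1}=\frac{n}{k+1}-|S|+\frac{k}{k+1}>\frac{n}{k+1}-\frac{n}{2k}\ge\frac{n}{4k}.
\]
The last inequality is equivalent to $k\ge 3$. This is exactly Krivelevich's bound $\frac{n-(2L-2)(k+1)}{k+1}$, and it explains the hypothesis $k>2$ in the statement.
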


For the proof of our main theorem, we need to
refine  this classification, taking into account
the orientations.
The following observation is crucial:
if a bare path in an oriented tree is not directed, then  at least one of its inner
vertices is a source or a sink. Any such vertex is called a \defi{switch}, and we say
that two switches are of the \defi{same type} if both are sources or both are sinks.
We will see next that in every oriented tree with many bare paths, many of these paths share
the same orientation. Further, either there are many directed paths or many switches. 
We also need to split our tree into two linear sized subtrees so that both contain the same structure.

\begin{lemma}
  \label{lemma:split_tree}
  Let $\Delta, n \geq 2$ and $k\geq 4$ be integers and set $t=n/(\Delta+1)^2$.
  Every oriented
  tree $T$ on $n$ vertices with $\Delta(T) \leq \Delta$ has an arc $e \in A(T)$ such
  that each of the two components $T_A$, $T_B$ of $T-e$   has at least $t$ vertices  and furthermore, one of the following three items holds for both $X\in\{A,B\}$:
  \begin{enumerate}[(a)]
	  \item  $T_X$ contains at least $\frac{t}{4k}$ leaves of $T$. 
	  \item   $T_X$ contains at least $\frac{t}{8k}$ vertex-disjoint directed bare paths of
      length $k$ of $T$.  
	  \item  $T_X$ contains at least $\frac{t}{16k}$ switches of the same type, and at least~$\frac{3t}{512}$ vertex-disjoint oriented bare paths of length 4 of $T$,
          all with the same orientation. 
          Moreover, the paths are vertex-disjoint
          from all switches and their neighbours.
\end{enumerate}
\end{lemma}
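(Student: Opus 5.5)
We begin by finding the splitting arc $e$. Root $T$ at an arbitrary vertex $r$ and, for each vertex $v$, let $s(v)$ be the size of the subtree rooted at $v$. Walk down from $r$, always moving to a child with the largest subtree, and stop at the first vertex $v$ with $s(v) < n - t$. Since $v$ is the first such vertex, its parent $p$ satisfies $s(p)\ge n-t$, and $v$ has the largest subtree among the at most $\Delta$ children of $p$. Hence
\[
s(v)\geq (s(p)-1)/\Delta \geq (n-t-1)/\Delta \geq t .
\]
Also $n-s(v) > t$. So cutting the arc $e$ between $p$ and $v$ leaves two components of size at least $t$. This is why $t=n/(\Delta+1)^2$ is comfortably enough: each side has at least $t$ vertices, and we even keep some slack.

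The harder part is making \emph{the same} alternative (a), (b) or (c) hold on both sides. To handle this, we first prove a one-sided classification for any subtree $T_X$ with $m\ge t$ vertices. Apply Lemma~\ref{lemma:krivelevich} to $T_X$ (with $k$, or with $2k$ when we need to halve). Then $T_X$ has either at least $m/4k$ leaves or at least $m/4k$ vertex-disjoint bare paths of length $k$.
\begin{itemize}
\item Leaves of $T_X$ other than the endpoint of $e$ are leaves of $T$, so we lose at most one.
\item Bare paths of $T_X$ avoiding the endpoint of $e$ are bare paths of $T$.
\end{itemize}
Among the bare paths, either half are directed, which gives (b) with $t/8k$, or half contain a switch. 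In the latter case, pigeonhole on the switch type gives $t/16k$ switches of the same type.

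For the length-$4$ paths in (c), we chop each non-directed length-$k$ bare path into consecutive length-$4$ subpaths, discarding those touching the chosen switches or their neighbours. Only a constant number are discarded per switch. Since there are only $2^3$ orientation patterns up to reversal (constantly many), pigeonhole gives a constant fraction $\ge 3t/512$ with the same orientation. I expect the bookkeeping of exactly which constant survives here to be fiddly, but routine.

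The main obstacle is that $T_A$ might satisfy, say, (a) while $T_B$ only satisfies (b). To fix this, we do not cut at a fixed arc. Instead, we first apply the classification to the whole tree $T$, obtaining $\ge n/4k$ (or $n/8k$, $n/16k$) structures of a single type (a), (b) or (c) in $T$. Then we choose $e$ so that both sides contain a $1/(\Delta+1)^2$-fraction of those structures, rather than a fraction of the vertices.

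Concretely, we repeat the walk-down argument with weights: put weight $1$ on each selected leaf, path or switch, assigning each structure to one vertex, for instance its top vertex. We then cut where the subtree weight first drops below $W - W/(\Delta+1)^2$, where $W$ is the total weight. The same computation shows both sides receive weight at least $W/(\Delta+1)^2$, minus at most one structure destroyed by the cut.

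Separately, we must still ensure both sides have $\ge t$ vertices. To get both conditions at once, we combine the weight with the vertex count, using weight $=$ (vertex count)$/n$ $+$ (structure count)$/W$. The walk-down on this combined measure gives each side at least a $1/(\Delta+1)$-ish fraction of the total measure $2$. However, that alone does not guarantee both parts are individually large, so I would instead choose the type and structures first, and note that the structures themselves occupy many vertices. Alternatively, we can rely on the slack between $(\Delta+1)^{-1}$ and $(\Delta+1)^{-2}$ by cutting twice. This reconciliation is the step I expect to require the most care, and the constants $8k$, $16k$, $3t/512$ are chosen to absorb these losses.
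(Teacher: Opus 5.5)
Your walk-down producing a cut with both sides of size at least $t$ is fine, and so is your one-sided classification of a single subtree. The step you flag as the reconciliation, however, is a genuine gap. The underlying strategy cannot work: fixing one type (a), (b) or (c) for the whole of $T$ and then looking for a single arc that balances both vertices and structures fails, because the structures of the globally chosen type may be confined to a region with fewer than $t$ vertices.

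For example, let $\Delta=3$ (so $t=n/16$) and $k\geq 17$. Let $T$ be a directed path $P$ on $n-\lfloor n/k\rfloor$ vertices with one end attached to the root of a binary tree $B$ on $\lfloor n/k\rfloor$ vertices. Then $T$ has at least $n/4k$ leaves, so nothing in your plan prevents selecting type (a). But removing any arc of $P$, or the arc between $P$ and $B$, leaves a component containing at most one leaf of $T$. Removing an arc inside $B$ leaves a component with fewer than $n/k<t$ vertices. So no arc works for (a), even though (b) works by cutting $P$ in the middle. This is exactly the regime used later, where $k=\ell$ is huge compared with $(\Delta+1)^2$.

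Your suggested fixes do not help. The combined measure does not force each of its two parts to be large on each side. The observation that `the structures occupy many vertices' only yields roughly $t/4k$ (leaves) or $t/8$ (paths) vertices per side, not $t$.

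The missing idea is to decide the type \emph{after} cutting, by pigeonhole. The paper first applies Proposition~\ref{prop:two_trees} with $m=n/(\Delta+1)$; your walk-down does the same job. This splits $T$ into two subtrees of size between $n/(\Delta+1)$ and $\Delta n/(\Delta+1)$. It then splits each of these once more, obtaining four subtrees $T_1,\dots,T_4$, each with at least $t$ vertices. After relabelling, they are linked in a path by three arcs $e_1,e_2,e_3$, with $e_i$ joining $T_i$ and $T_{i+1}$.

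Applying your one-sided classification to each $T_i$ separately, every $T_i$ satisfies one of (a), (b), (c). Four pieces and three types force two pieces $T_{i_1},T_{i_2}$ with $i_1<i_2$ to share a type. Removing any $e_i$ with $i_1\leq i<i_2$ gives two components, each containing one of these pieces entirely. So both components have at least $t$ vertices and the same kind of structure. No weighted cut or simultaneous balancing is needed; your remark about `cutting twice' becomes an argument only once this pigeonhole step over the pieces' types is added.
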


For the proof of this lemma we need the following result by Alon, Krivelevich and Sudakov~\cite{treesplitting}.

\begin{prop}[{\cite[Proposition 4.2]{treesplitting}}]
  \label{prop:two_trees}
  Let $\Delta \geq 1$ be an integer and $m \in \mathbb{R}$. Let $T$ be a tree on at
  least $m+1$ vertices with $\Delta(T) \leq \Delta$. Then there
  exists $e\in E(T)$ such that at least one of the trees obtained by deleting $e$ has
  at least $m$ and at most $\Delta m$ vertices.
\end{prop}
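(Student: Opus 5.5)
We would argue by rooting the tree at a leaf and taking a deepest subtree that is still large. We assume $m \geq 1$, which is the only range needed later, where $m$ will be a linear function of $n$. For $m<1$ a component has at least one vertex $\geq m$, but the upper bound $\Delta m$ may drop below $1$, so there the statement is not meaningful as stated.

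Fix a leaf $r$ of $T$ and regard $(T,r)$ as a rooted tree. For every vertex $v \neq r$ let $s(v)$ be the number of vertices of the subtree $T_v$ consisting of $v$ and all its descendants. Removing the edge between $v$ and its parent leaves two components, and $T_v$ is one of them. So it suffices to find a non-root vertex $v$ with $m \leq s(v) \leq \Delta m$. Since $r$ is a leaf, it has a unique child $c$, and $s(c) = |V(T)| - 1 \geq m$. Hence the set $S=\{v \neq r : s(v) \geq m\}$ is nonempty.

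Choose $v \in S$ of maximum depth, that is, at maximum distance from $r$. Then every child $w$ of $v$ is not in $S$, so $s(w) < m$. Because $v \neq r$, one of its at most $\Delta$ incident edges goes to its parent, so $v$ has at most $\Delta - 1$ children. Therefore
\[
m \leq s(v) = 1 + \sum_{w \text{ child of } v} s(w) \leq 1 + (\Delta-1)m \leq \Delta m,
\]
where the last inequality uses $m \geq 1$. Taking $e$ to be the edge from $v$ to its parent, the component $T_v$ of $T-e$ has between $m$ and $\Delta m$ vertices, as required.

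There is no real obstacle here. The only delicate point is the choice of root: rooting at a leaf makes the initial large subtree exist without needing any bound on the root's degree, and it guarantees that every non-root vertex has at most $\Delta-1$ children. This is exactly what gives the factor $\Delta$ rather than something larger.
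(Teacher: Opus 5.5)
Your argument is correct and complete. Rooting at a leaf forces the deepest vertex $v$ with $s(v)\ge m$ to be a non-root vertex with at most $\Delta-1$ children, which gives $m\le s(v)\le 1+(\Delta-1)m\le\Delta m$. The paper gives no proof of its own and simply cites Alon--Krivelevich--Sudakov; your argument is the standard proof of this fact.

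Your restriction to $m\ge 1$ is justified. As quoted, the statement fails for $0<m<1/\Delta$: for example, a single edge with $\Delta=1$ and $m=1/2$ has no component of size at most $\Delta m$. This does not matter for the paper, which only applies the proposition with $m=n/(\Delta+1)$ and $m'=|T'|/(\Delta+1)$, both large.
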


\begin{proof}[Proof of Lemma~\ref{lemma:split_tree}]
  We apply~Proposition~\ref{prop:two_trees} to the underlying graph of $T$
  with ${m=n/(\Delta+1)}$
  to obtain an  edge~${e_1 \in A(T)}$ that
  partitions $T$ into two subtrees~$T'$ and~$T''$ such that $m\leq |T'| \leq \Delta m$.
  Observe that, by the choice of $m$, we also have   $m=n-\Delta m \leq |T''| \leq n-m = \Delta m$. 
  We then apply Proposition~\ref{prop:two_trees} to  the underlying graph of~$T'$ with~${m'=|T'|/(\Delta+1)}$ to get an
  edge $e_2 \in A(T')$ that partitions $T'$ into $T_1$ and $T_2$ satisfying
  \begin{equation}
    \label{eq:bound_t}
    t = \frac{n}{(\Delta+1)^2} \leq m' \leq |T_1|\leq \Delta m' \leq \frac{\Delta^2n}{(\Delta+1)^2}.
  \end{equation}
  By the choice of $m'$, the tree $T_2$ also satisfies the bounds given in~\eqref{eq:bound_t}.
  Similarly, we can find an edge $e_3$ to split $T''$ into $T_3$ and $T_4$ satisfying the same bounds as in~\eqref{eq:bound_t}.
  We may assume that, for each $i \in [3]$, the  edge $e_i$ connects $T_i$ with $T_{i+1}$.

  First, suppose that each tree~$T_i$, for $i\in [4]$, has a collection as described in
  either \textit{(a)}, \textit{(b)} or \textit{(c)}. By the pigeonhole principle,
  at least two of the four trees, say $T_{i_1}$ and $T_{i_2}$, must have a
  collection of the same type. Let $e_i$ be an   edge separating $T_{i_1}$ from $T_{i_2}$.
  Then taking $T^{*}_1 = T[\bigcup_{j\le i} V(T_j)]$ and $T^{*}_2 =T[\bigcup_{j> i} V(T_j)]$,
  and observing that each of them has at least $t$ vertices, by~\eqref{eq:bound_t}, we get the result.

  To finish the proof  we only need to show that
  each tree $T_i$, for~${i \in [4]}$, has a collection as   in
  \textit{(a), (b),} or \textit{(c)}. Fix 
  ${i \in [4]}$.
  By Lemma \ref{lemma:krivelevich}, if
  $T_i$ does not satisfies~\textit{(a)}, then it contains~$t/4k$ vertex-disjoint bare paths of length $k$, which we assume from now on. If $T_i$ does not satisfy $\textit{(b)}$, then at least 
 $t/8k$ of the bare paths contain a switch. Then there is a set of $t/16k$ of the bare paths, all of which contain a switch of the same type. 
 From the other $3t/16k$ paths, we 
  select a collection of vertex-disjoint bare paths of length~$4$, all sharing the
  same orientation. 
  This collection contains at least $\floor{\frac{k}{4}}\frac{3t}{16k}\frac{1}{2^2} \geq \frac{3t}{512}$ 
  paths. 
  So~$T_i$ satisfies \textit{(c)}.
\end{proof}

\section{The Diregularity Lemma}
\label{ssec:diregularity}

Let~$D$ be a directed graph, let~$\eps > 0$, let~${\diamond \in \{+,-\}}$ and let~$X,Y\subseteq V$ be   disjoint
and nonempty. The \defi{out-density}~$d^{+}(X,Y)$ and \defi{in-density}~$d^{-}(X,Y)$ of the pair~$(X,Y)$ are defined by
\[
  d^{\diamond}(X,Y) = \frac{e^{\diamond}(X,Y)}{|X||Y|},
\]
where $e^{\diamond}(X,Y)$ counts the edges between $X$ and $Y$ in the direction indicated by~$\diamond$.
A set ${X' \subseteq X}$
is~\defi{$\eps$\nobreakdash-significant} if ${|X'| \geq \eps|X|}$.
The pair~$(X,Y)$
is~\defi{$(\eps,\diamond)$-regular}
if ${|d^{\diamond}(X,Y) - d^{\diamond}(X',Y')| \leq \eps}$ for
all~{$\eps$\nobreakdash-significant} subsets~${X' \subseteq X}$
and~${Y' \subseteq Y}$. If furthermore, for some $d \geq 0$, we
have~${\deg^{\diamond}(x,Y) \geq (d-\eps)|Y|}$, for all $x \in X$, we say
that the
pair~$(X,Y)$ is \defi{$(\eps,\diamond,d)$-super-regular}. 

The Regularity Lemma of Szemerédi~\cite{regularity} states that
every large graph can be partitioned into a bounded number of
sets, most of which are pairwise $\eps$-regular. We will use the
following version of this lemma for digraphs, obtained by Alon
and Shapira~\cite{alon}.

\begin{lemma}[Degree form of the Diregularity Lemma~\cite{alon}]
  \label{lemma:diregularity}
  For every $0<\eps < 1$ and $m_0 \in \NN$, there are integers $M_0$ and $n_0$
  such that the following holds for all $n \geq n_0$ and $0<d \leq 1$.
  If $D$ is a digraph on $n$ vertices, then there is a partition $\{V_0,\dots,V_k\}$ of $V(D)$, and
  a spanning subgraph~$D^{*}$ of~$D$ such that:
  \begin{enumerate}[(a)]
    \item $m_0 \leq k \leq M_0$,
    \item $|V_0| \leq \eps n$ and $|V_1|=\dots=|V_k|$,
    \item $\deg_{D^{*}}^{\diamond}(v) > \deg_{D}^{\diamond}(v) - (d+\eps)n$,
      for all $v \in V$ and $\diamond \in \{+,-\}$,
    \item $V_i$ is an independent set in $D^{*}$, for all $i \in [k]$, and
    \item for all $1 \leq i,j \leq k$, the ordered pair $(V_i,V_j)$ is
      $(\eps,+)$-regular in $D^{*}$ with density either $0$ or at least $d$.
  \end{enumerate}
\end{lemma}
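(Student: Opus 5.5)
The plan is to derive this degree form from the plain digraph regularity lemma of Alon and Shapira, using the standard clean-up that turns Szemerédi's lemma into its degree form. Fix $\eps$ and $m_0$, and choose $\eps' \ll \eps$ depending only on $\eps$. This choice is what makes $M_0$ and $n_0$ independent of $d$. Apply the Alon--Shapira lemma with parameter $\eps'$ and lower bound $\max\{m_0,2/\eps'\}$ on the number of parts. This gives an equipartition $W_0,W_1,\dots,W_{k'}$ with $|W_0|\le \eps' n$ and $k'\le M_0(\eps')$. In it, all but at most $\eps' k'^2$ ordered pairs $(W_i,W_j)$ are $(\eps',+)$-regular. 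Note that $(\eps',-)$-regularity of $(W_i,W_j)$ is the same as $(\eps',+)$-regularity of $(W_j,W_i)$, so controlling ordered pairs in the $+$ direction suffices.

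Next I define $D^*$ and the exceptional set. First delete all arcs inside each $W_i$, all arcs of irregular ordered pairs, and all arcs of ordered pairs with out-density less than $d+\eps/2$. Using the threshold $d+\eps/2$ rather than $d$ is what yields, after later trimming, surviving pairs of density at least $d$.

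Now control the degree loss vertex by vertex. Arcs inside a cluster cost at most $n/k'\le \eps' n$. For irregular pairs, a counting argument shows that at most $\sqrt{\eps'}\,n$ vertices lie in clusters $W_i$ for which more than $\sqrt{\eps'}k'$ of the pairs $(W_i,W_j)$ or $(W_j,W_i)$ are irregular; move these clusters into $V_0$. For every other vertex, irregular pairs cost at most $\sqrt{\eps'}n$ in each direction. For a regular low-density pair $(W_i,W_j)$, regularity implies that at most $\eps'|W_i|$ vertices $x\in W_i$ have $\deg^+(x,W_j) > (d+\eps/2+\eps')|W_j|$; the analogous statement holds for in-degrees in $W_j$. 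Call such $(x,j)$ an atypical incidence. There are at most $2\eps' n k'$ atypical incidences in total, so at most $2\sqrt{\eps'}\,n$ vertices are atypical for more than $\sqrt{\eps'}k'$ indices $j$; move these vertices into $V_0$ too. Every remaining vertex loses at most $(d+\eps/2+\eps')n + \sqrt{\eps'}n$ through low-density pairs. Summing the losses gives less than $(d+\eps)n$, which is (c) for vertices outside $V_0$. For vertices of $V_0$, (c) comes for free because we do not delete arcs at them beyond the cleared pairs; alternatively, we simply do not delete arcs touching $V_0$, since (d) and (e) concern only $V_1,\dots,V_k$.

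The last step restores equal cluster sizes and checks (e). After the removals, trim every remaining cluster to a common size by moving at most $O(\eps'^{1/2})$ of its vertices into $V_0$. Then $|V_0|\le \eps n$ and the clusters are equal, giving (b). Since each new cluster $V_i$ keeps at least a $(1-\eps/4)$ fraction of $W_i$, any $\eps$-significant subset of $V_i$ is $\eps'$-significant in $W_i$. Hence surviving pairs remain $(\eps,+)$-regular, and their densities change by at most $\eps'$, so they stay at least $d+\eps/2-\eps'\ge d$; deleted pairs have density $0$. This gives (e), and (d) holds by construction. Setting $k$ to be the number of surviving clusters, we get $k\ge (1-\sqrt{\eps'})k'\ge m_0$, which is (a) once the lower bound in the first application is taken large enough.

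The main obstacle I expect is the bookkeeping in the low-density step. A vertex must not lose many arcs through many low-density pairs in which it is atypical, and this must hold simultaneously for in- and out-degrees. This is exactly where the double-counting of atypical incidences and the $\sqrt{\eps'}$ thresholds are needed, and where $\eps'$ must be chosen small enough relative to $\eps$.
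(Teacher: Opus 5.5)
The paper gives no proof of this lemma. It quotes the degree form from Alon and Shapira as a black box, and the degree form is customarily obtained from the plain directed regularity lemma by the same clean-up as in the undirected case. Your proposal carries out that clean-up explicitly, and it is correct:
\begin{itemize}
\item choosing $\eps'$ depending only on $\eps$ makes $M_0,n_0$ independent of $d$;
\item the threshold $d+\eps/2$ leaves room for the $\eps'$ density drift caused by trimming;
\item controlling only ordered pairs in the $+$ direction suffices;
\item the per-direction loss $\eps'/2+\sqrt{\eps'}+(d+\eps/2+\eps')+\sqrt{\eps'}<d+\eps$ (in units of $n$) is right for suitably small $\eps'$.
\end{itemize}
Compared with the paper's citation, your version is self-contained and makes visible two facts the citation hides: the independence of $M_0,n_0$ from $d$, and the fact that exceptional vertices may keep all their arcs. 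The paper's route buys only brevity.

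Two points need tightening.

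First, your primary justification of (c) for vertices of $V_0$ is false. A vertex moved to $V_0$ because its cluster has many irregular pairs, or because it is atypical for many low-density pairs, may already have lost almost all of its arcs in the first deletion round. Only your alternative is valid: define $D^*$ to contain every arc of $D$ incident to $V_0$. This is harmless because (d) and (e) only concern $V_1,\dots,V_k$, and restoring these arcs only increases degrees of the other vertices.

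Second, trimming to a common size needs a per-cluster bound on removed atypical vertices. You only computed the global count $2\sqrt{\eps'}n$, and that alone would allow the atypical vertices to concentrate in a few clusters and make the common size tiny. The per-cluster bound does hold, by localising your own count. Fix $W_i$. Each of the at most $2k'$ low-density regular pairs $(W_i,W_j)$ or $(W_j,W_i)$ produces fewer than $\eps'|W_i|$ atypical vertices inside $W_i$. Hence fewer than $2\sqrt{\eps'}|W_i|$ vertices of $W_i$ are atypical for more than $\sqrt{\eps'}k'$ indices. With this stated, every surviving cluster keeps at least $(1-2\sqrt{\eps'})|W_i|$ vertices. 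That justifies your claim that $\eps$-significant subsets of $V_i$ are $\eps'$-significant in $W_i$, and the rest of your argument goes through.
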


We refer to the sets $V_1,\dots,V_k$ as \defi{clusters}, and call~$V_0$ and its vertices as \defi{exceptional}. The
spanning subgraph~${D^{*} \subseteq D}$ is called   a \defi{regularised digraph} of~$D$.
The \defi{$(\eps,d)$-reduced digraph} of $D$ with respect to $D^{*}$
and~$\{V_1,\dots,V_k\}$
is the digraph on the vertex set~$\{V_1,\dots,V_k\}$ that has an arc from~$V_i$ to~$V_j$ if and only
if~$(V_i,V_j)$ is~$(\eps,+)$-regular  with density at least~$d$.

It is known that the reduced digraph inherits the minimum semidegree
of~$D^{*}$ (scaled down to its order). Moreover, Kühn, Osthus, and Treglown~\cite{hamiltonian-degree}
showed that the reduced digraph also inherits the robust out-expansion property.

\begin{lemma}[{\cite[Lemmas 14 and 15]{hamiltonian-degree}}]
  \label{lemma:reduced}
  Let $1/n \ll \eps \ll d \ll \nu, \tau, \gamma < 1.$
  Let $D$ be a robust $(\nu,\tau)$-out-expander on $n$ vertices with $\semidegree(D) \geq \gamma n$.
  Let $D^{*}$ and~${\{V_0,\dots,V_k\}}$ be a regularised digraph
  of $D$ and a partition of~$V(D)$, respectively, obtained by applying
  Lemma~\ref{lemma:diregularity} with parameters~${\eps, d}$ and~$m_0$.
  Let~$R^{*}$ be the $(\eps,d)$-reduced digraph of $D$ with respect to $D^{*}$ and
  $\{V_1,\dots,V_k\}$. Then there exists a spanning oriented subgraph $R \subseteq R^{*}$ such that
  $\semidegree(R) \geq \gamma |R|/8$ and~$R$ is a robust
  $(\nu/24,2\tau)$-out-expander.
\end{lemma}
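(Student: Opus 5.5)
The statement to prove is Lemma~\ref{lemma:reduced}. The plan is to take $R$ to be $R^{*}$ itself, after deleting one arc from each pair of opposite arcs so that it becomes oriented, and to check the semidegree and expansion properties by transferring degree counts from $D$ to $R^{*}$ through the regularised digraph $D^{*}$.

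\textbf{Step 1: minimum semidegree of $R^{*}$.} Fix a cluster $V_i$ and write $m=|V_i|$.
\begin{itemize}
\item By Lemma~\ref{lemma:diregularity}(c), every $v\in V_i$ has $\deg^{+}_{D^{*}}(v)\ge (\gamma-d-\eps)n$.
\item At most $\eps n$ of these out-neighbours lie in $V_0$.
\item By (e), every arc of $D^{*}$ between clusters goes from $V_i$ to some $V_j$ with density at least $d$, so $V_j$ is an out-neighbour of $V_i$ in $R^{*}$.
\end{itemize}
Summing over $v\in V_i$ gives $\deg^{+}_{R^{*}}(V_i)\, m^2 \ge m(\gamma-d-2\eps)n$. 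Since $km\le n$, this yields $\deg^{+}_{R^{*}}(V_i)\ge(\gamma-2d)k$, and the in-degree bound follows in the same way.

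\textbf{Step 2: making $R^{*}$ oriented.} I would choose, for each pair of opposite arcs independently, one orientation uniformly at random. For every cluster, Chernoff's bound (Lemma~\ref{lemma:chernoff}) shows that with positive probability it keeps at least roughly half of its double arcs in each direction. This holds because $k\ge m_0$ is large, provided we also have a lower bound on how many arcs there are; if few double arcs touch a cluster, nothing is lost there anyway.

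If this randomisation is awkward, a cruder deterministic option is available: keep the arc between $V_i$ and $V_j$ in the direction of the larger density. Either way, each semidegree drops by at most a factor of about two, plus an error term, which comfortably gives $\gamma k/8$.

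\textbf{Step 3: robust expansion.} Let $\mathcal S$ be a set of clusters with $2\tau k\le|\mathcal S|\le(1-2\tau)k$, and let $S=\bigcup\mathcal S$.
\begin{itemize}
\item Then $\tau n\le|S|\le(1-\tau)n$, so the robust out-neighbourhood $N$ of $S$ in $D$ has size at least $|S|+\nu n$.
\item Passing to $D^{*}$ costs each vertex at most $(d+\eps)n$ in-degree. So vertices of $N$ still receive at least $(\nu-d-\eps)n$ arcs from $S$ in $D^{*}$.
\item Call a cluster $V_j$ \emph{good} if it contains at least $\nu m/2$ vertices of $N$. Counting arcs from $S$ into the good clusters, every such arc comes from a regular pair of density at least $d$, i.e.\ an $R^{*}$-arc $V_iV_j$ with $V_i\in\mathcal S$.
\end{itemize}
Double counting then gives two things. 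First, the number of good clusters is at least $|\mathcal S|+\nu k/2$, because $|N\setminus V_0|\ge |S|+(\nu-\eps)n$ and non-good clusters contribute at most $\nu n/2$. Second, each good cluster has at least $\nu^2 k/4$ in-neighbours in $\mathcal S$, since it receives at least $(\nu m/2)(\nu-2d)n$ arcs from $S$ and each in-neighbour contributes at most $m^2$.

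\textbf{Step 4: surviving the orientation.} The hardest step is making sure robust expansion survives Step~2. Removing one arc from each double arc could destroy the in-neighbours of a good cluster coming from $\mathcal S$.

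I would handle this by making the random orientation in Step~2 independent over pairs and applying Chernoff's bound per cluster and per set. That alone is not enough, because there are exponentially many sets $\mathcal S$ and the union bound fails.

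The remedy I would try first is to argue in the style of the original paper. The expansion constant drops from $\nu$ to $\nu/24$ with margin, so one only needs that for every cluster $V_j$ and every \emph{vertex} set, the kept in-arcs form a large fraction. This is guaranteed if, for every cluster, the random orientation retains at least a third of its in-arcs inside every linear-sized subset. I would obtain this through a deterministic balanced-orientation argument, for example by pairing double arcs and orienting them along an Euler-tour type decomposition so that every cluster keeps about half of its double arcs in each direction.

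Combined with the counting in Step~3, this gives the $(\nu/24,2\tau)$ expansion.
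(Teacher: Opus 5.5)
The paper does not prove this lemma; it quotes it from K\"uhn, Osthus and Treglown. Your Step~1 and the random part of Step~2 are fine, but Step~4 has a genuine gap. The property you aim for there is that every cluster keeps a constant fraction of its in-arcs inside every linear-sized set of clusters. This fails for \emph{every} orientation. If $V_j$ has many double arcs, let $\mathcal S$ be the set of clusters whose double arc with $V_j$ was oriented away from $V_j$; this set is linear-sized and $V_j$ keeps no in-arc from it. A balanced (Euler-tour) orientation controls only total in- and out-degrees, not degrees into arbitrary subsets, so it cannot supply this either.

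The missing idea is that only all but a few clusters of the robust out-neighbourhood need to survive. Orient the double arcs independently and uniformly at random. Fix $\mathcal S$ and a set $X$ of $\lceil \nu k/4\rceil$ clusters of $\RN^{+}_{\nu/2,R^{*}}(\mathcal S)$. Each $V_j\in X$ still has at least $\nu k/4$ in-neighbours in $\mathcal S\setminus X$. The numbers of these arcs that survive are independent over $V_j\in X$, because every relevant pair $\{V_i,V_j\}$ has exactly one end in $X$. Hence the probability that every cluster of $X$ keeps fewer than $\nu k/24$ in-arcs from $\mathcal S$ is $\exp(-\Omega(\nu^2k^2))$. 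This beats the union bound over the at most $4^k$ choices of $(\mathcal S,X)$ once $k\ge m_0\gg\nu^{-2}$. That condition is implicit in the statement and holds in the application, where $m_0=1/\eps$. So $\RN^{+}_{\nu/24,R}(\mathcal S)$ contains all but at most $\nu k/4$ clusters of $\RN^{+}_{\nu/2,R^{*}}(\mathcal S)$.

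Step~3 as written also cannot reach $\nu/24$. Counting all arcs into a good cluster and dividing by $m^2$ gives only about $\nu^2k/4$ in-neighbours in $\mathcal S$, which is below $\nu k/24$ once $\nu<1/6$, and the orientation only lowers this further. Argue per vertex instead. If $x\in V_j$ lies in $N\setminus V_0$, its at least $(\nu-d-\eps)n$ in-neighbours in $S$ under $D^{*}$ lie in clusters $V_i\in\mathcal S$ with $V_iV_j\in A(R^{*})$. Each such cluster contributes at most $m$ of them, so $V_j$ has at least $(\nu-2d)k\ge \nu k/2$ in-neighbours in $\mathcal S$. Thus every cluster meeting $N\setminus V_0$ lies in $\RN^{+}_{\nu/2,R^{*}}(\mathcal S)$, and there are at least $|\mathcal S|+(\nu-\eps)k$ such clusters. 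Combined with the previous paragraph, this leaves at least $|\mathcal S|+\nu k/24$ clusters in $\RN^{+}_{\nu/24,R}(\mathcal S)$.

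Finally, drop the deterministic ``larger density'' option in Step~2. $D$ is not assumed to be oriented, so both densities of a pair can be large, for instance all equal. Then that rule, with ties broken by index, yields a transitive tournament on the clusters, which has a cluster of in-degree $0$.
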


In the proof of Theorem~\ref{thm:main}, we will use the Blow-up Lemma.
We will use the following version, due to Csaba~\cite{csaba}.

\begin{lemma}[Blow-up Lemma]
  \label{lemma:blow_up}
  For all integers $\Delta, K_1, K_2, K_3$ and every positive constant~$c$
  there exists an integer $N$ such that whenever $\eps, \eps', \delta, d$
  are positive constants with
  \[
    0 < \eps \ll \eps' \ll \delta \ll d \ll 1/\Delta, 1/K_1, 1/K_2, 1/K_3, c
  \]
  the following holds. Suppose that $G$ is a graph of order $n \geq N$
  and $\{V_0,\dots,V_k\}$ is a partition of~$V(G)$ such that the pair
  $(V_i,V_j)$ is $\eps$-regular with density either $0$ or at least $d$
  for all~${1 \leq i < j \leq k}$. Let $H$ be a graph on $n$ vertices with
  $\Delta(H) \leq \Delta$ and let $\{L_0, L_1,\dots, L_k\}$ be
  a partition of~$V(H)$ with~${|L_i| = |V_i| = m}$, for every $i \in [k]$.
  Furthermore, suppose that there exists a
  bijection~${\psi:L_0 \rightarrow V_0}$ and a set $I \subseteq V(H)$
  of vertices at distance at least $4$ from each other such that the following 
  conditions hold:
  \begin{enumerate}
    \item[(C1)] $|L_0| = |V_0| \leq K_1 dn$.
    \item[(C2)] $L_0 \subseteq I$.
    \item[(C3)] $L_i$ is independent for every $i=1,\dots,k$.
    \item[(C4)] $|N_H(L_0) \cap L_i| \leq K_2 dm$, for every $i \in [k]$.
    \item[(C5)] For each $i \in [k]$, there exists $D_i \subseteq I \cap L_i$
                with $|D_i|=\delta m$ and such that
                for $D = \bigcup_{i=1}^{k}D_i$ and all $1 \leq i < j \leq k$,
                $\big| |N_H(D) \cap L_i| - |N_H(D) \cap L_j| \big| < \eps m$.
    \item[(C6)] If $xy \in E(H)$ and $x \in L_i, y \in L_j$ where $i,j \neq 0$
                then $(V_i,V_j)$ is $\eps$-regular with density $d \pm \eps$.
    \item[(C7)] If $xy \in E(H)$ and $x \in L_0, y \in L_j$, then
                $|N_{G}(\psi(x))\cap V_j| \geq cm$.
    \item[(C8)] For each $i \in [k]$, given any $E_i \subseteq V_i$ with
                $|E_i| \leq \eps'm$ there exists a
                set $F_i \subseteq (L_i \cap (I\setminus D))$ and a
                bijection $\psi_i:E_i \rightarrow F_i$ such that
                $|N_{G}(v) \cap V_j| \geq (d-\eps)m$ whenever
                $N_H(\psi_i(v))\cap L_j \neq \emptyset$, for all $v \in E_i$
                and all $j \in [k]$.
    \item[(C9)] Writing $F = \bigcup_{i=1}^{k}F_i$ we have that 
                $|N_H(F) \cap L_i| \leq K_3\eps'm$.
  \end{enumerate}
  Then $G$ contains a copy of $H$ such that the image of $L_i$ is $V_i$
  for all $i \in [k]$, and the image of each $x \in L_0$
  is $\psi(x) \in V_0$.
\end{lemma}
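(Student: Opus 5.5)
The statement above is the Blow-up Lemma in the version of Csaba~\cite{csaba}. The paper only applies it, so the plan is to quote it rather than reprove it. Still, here is how I would organise a proof, following the randomised greedy algorithm of Koml\'os, S\'ark\"ozy and Szemer\'edi, with two adaptations: the exceptional part $L_0$ is fixed by $\psi$, and the pairs are only regular rather than super-regular.

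\emph{Step 1: preprocessing.} First I would restore super-regularity. In each $V_i$, let $E_i$ be the at most $\eps' m$ vertices whose degree into some neighbouring cluster $V_j$ is below $(d-\eps)m$; regularity, using (C6), guarantees there are few of them. By (C8), each $v\in E_i$ can be matched to a vertex $\psi_i(v)\in F_i\subseteq L_i\cap(I\setminus D)$ whose $H$-neighbourhood only meets clusters into which $v$ has large degree. I would fix these images in advance and treat $F\cup L_0$ as pre-embedded. Since $I$ is $4$-independent, the pre-embedded vertices have disjoint neighbourhoods. Conditions (C4), (C9) and (C1) ensure that in each cluster only $O(K_1+K_2+K_3)\,dm$ vertices of $H$ get a restricted candidate set. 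By (C7), together with the lower degree bound for $\psi_i(v)$, each of these candidate sets has linear size, at least $cm$ or $(d-\eps)m$.

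\emph{Step 2: random greedy embedding.} I would embed $V(H)\setminus(L_0\cup F\cup D)$ vertex by vertex, in an order that puts first the vertices with restricted candidate sets. Each vertex is mapped uniformly at random into its current candidate set in $V_i$, meaning the common neighbourhood of the images of its already embedded neighbours, minus the used vertices. Whenever a candidate set shrinks below a threshold, its vertex is moved to the front of a queue. The standard regularity estimates, with $\Delta(H)\le\Delta$ and $\eps\ll\eps'\ll\delta\ll d$, show that with high probability candidate sets stay of size about $d^{\Delta}$ times the number of free vertices, and the queue stays small.

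\emph{Step 3: finishing the buffer $D$.} For each $i$, the vertices of $D_i$ and the remaining free vertices of $V_i$ form a bipartite candidate graph. I would embed $D$ by finding a perfect matching via Hall's theorem. This is where (C5) is needed: the vertices of $D$ lie in $I$, so their neighbourhoods are disjoint, and the balance condition on $N_H(D)\cap L_i$ keeps the leftover free sets sized consistently across clusters. I expect this final Hall-type verification to be the main obstacle. One must show that the random choices leave the free vertices of each $V_i$ well spread, so that no small set of free vertices is adjacent to too few buffer candidates and vice versa. I would handle it as Koml\'os--S\'ark\"ozy--Szemer\'edi do, with a martingale/Chernoff argument such as Lemma~\ref{lemma:chernoff} and Lemma~\ref{lemma:Azuma}, showing that every large subset of $V_i$ is hit by the candidate sets in proportion to its size.
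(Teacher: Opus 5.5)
Citing Csaba is exactly what the paper does. However, your claim that the statement \emph{is} Csaba's lemma is not accurate, and correcting it is the only thing the paper actually argues here. The version above deviates from the published one in two ways, and the paper justifies both:
\begin{itemize}
\item The conclusion that each $L_i$ is mapped into $V_i$ and each $x\in L_0$ to $\psi(x)$ is not in Csaba's statement; it is read off from his proof.
\item Condition (C6) allows density $d\pm\eps$ instead of exactly $d$, which is covered by Csaba's Remark~3.
\end{itemize}
Both deviations matter in the application. The lemma is applied to underlying graphs, and it is the prescribed correspondence $L_i\to V_i$, $x\mapsto\psi(x)$ that turns the resulting copy back into an oriented copy of $T_B$ (and of $T_A$, attached at $r_B$). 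Also, Fact~\ref{fact:d_close} only delivers densities $d\pm\eps$.

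Your optional sketch, taken as a proof, would fail at Step~1. The lemma puts no bound on $k$ in terms of $\eps$, so the $H$-neighbours of $L_i$ may lie in more than $\eps'/\eps$ clusters; in the paper's application the random walk spreads them over linearly many clusters of $R_B$. Regularity allows up to $\eps m$ low-degree vertices of $V_i$ \emph{per} cluster. Hence the set of vertices with low degree into \emph{some} neighbouring cluster can exceed $\eps' m$, or even be all of $V_i$, and then (C8) cannot be invoked. Moreover, with densities only $d\pm\eps$, the cut-off $(d-\eps)m$ fails even for a single pair; it must be something like $(d-2\eps)m$.

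The set $E_i$ has to be defined by double counting instead. Call $v\in V_i$ atypical for $x\in L_i$ if $v$ has low degree into one of the at most $\Delta$ clusters met by $N_H(x)$. Put $v$ into $E_i$ only if it is atypical for more than $\sqrt{\eps}\,m$ vertices $x$; then $|E_i|\le\Delta\sqrt{\eps}\,m\le\eps' m$. The greedy phase must then restrict each vertex to candidates typical for it, and the matching phase must tolerate free vertices that are atypical for a few buffer vertices.

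Your account of (C5) in Step~3 is also off. The free part of $V_i$ at the end automatically has exactly $|D_i|$ vertices because $|L_i|=|V_i|$, so (C5) is not what balances the leftover sets. In the Koml\'os--S\'ark\"ozy--Szemer\'edi scheme, what makes Hall's condition provable is that the neighbours $N_H(D)$ of the buffer are embedded first, while candidate sets still have linear size. This makes the buffer vertices' candidate sets behave randomly with respect to the final free sets. Your ordering does not do this, and the step you yourself flag as the main obstacle is left unargued.
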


We remark that the way we state the Blow-up lemma is slightly different
from the version in~\cite{csaba}. First,
the additional properties of the copy of $H$ in $G$, namely that the image of~$L_i$
is~$V_i$, for all $i \in [k]$, and the
image of each $x \in L_0$ is $\psi(x) \in V_0$, 
do not appear in the statement but are explicitly included in the proof.
Moreover, our condition~(C6) allows the regular
pairs to have density close to~$d$ in terms of~$\eps$, while in the original statement
this density had to be exactly~$d$. This change does not affect the validity of the lemma,
as remarked by Csaba (see Remark~3 in~\cite{csaba}).
We also observe that, although the
Diregularity Lemma yields a regularised graph in which the regular pairs
have density at least~$d$, we can discard arcs
from pairs whose density exceeds~$d$ with appropriate probabilities. By Chernoff's bound,
the resulting density is close to $d$ in terms of $\eps$, allowing an application of the Blow-up lemma.
This gives the following well-known fact.

\begin{fact}
    \label{fact:d_close}
  Let $D$ be a digraph and let $(X,Y)$ be an $(\eps,+)$-regular pair
  in~$D$ with density at least $d$. Then there exists a spanning
  subgraph $D' \subset D$ such that $(X,Y)$ is an $(2\eps,+)$-regular
  pair in $D'$ with density $d \pm \eps$. If, moreover, the pair~$(X,Y)$
  is $(\eps,d)$-super-regular in $D$, then it is
  $(2\eps,d^2)$-super-regular in $D'$.
\end{fact}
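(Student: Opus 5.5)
The plan is to sparsify the pair at random. Let $m_X=|X|$, $m_Y=|Y|$ and $d_0=d^{+}(X,Y)\geq d$. Set $p=d/d_0\in(0,1]$. Build $D'$ from $D$ by keeping every arc from $X$ to $Y$ independently with probability $p$, and keeping all other arcs of $D$. We then show that, with positive probability (indeed with high probability, since $m_X,m_Y$ are large), $D'$ has every required property. Throughout we assume that $|X|,|Y|$ are sufficiently large in terms of $\eps$, which is the setting in which the fact is used.

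\emph{Step 1 (density).} The number of kept arcs $e'(X,Y)$ is binomial with mean $p\,e^{+}(X,Y)=d\,m_Xm_Y$. Lemma~\ref{lemma:chernoff} with relative error $\eps/2$ shows that $|d'(X,Y)-d|\leq\eps/2$ with probability $1-\exp(-\Omega(\eps^2 d\,m_Xm_Y))$. So the new density is $d\pm\eps$.

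\emph{Step 2 (regularity).} Fix $\eps$-significant sets $X'\subseteq X$ and $Y'\subseteq Y$. Then $e'(X',Y')$ is binomial with mean
\[
p\,d^{+}(X',Y')\,|X'||Y'| \geq p(d_0-\eps)\eps^2 m_Xm_Y = \Omega(\eps^2 d\, m_Xm_Y),
\]
using $p(d_0-\eps)\geq d-\eps$. Chernoff with relative error $\eps/2$ gives $d'(X',Y')=p\,d^{+}(X',Y')\pm\eps/2$ except with probability $\exp(-\Omega(\eps^4 d\,m_Xm_Y))$. There are at most $2^{m_X+m_Y}$ choices of $(X',Y')$. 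The failure exponent is quadratic in the sizes while the count is only exponential in $m_X+m_Y$, so a union bound succeeds for large clusters; this is the only point needing care. On the good event, the triangle inequality gives
\[
|d'(X',Y')-d'(X,Y)| \leq p\,|d^{+}(X',Y')-d_0| + \eps/2 + \eps/2 \leq p\eps+\eps \leq 2\eps,
\]
because $(X,Y)$ is $(\eps,+)$-regular and $p\leq 1$. Hence $(X,Y)$ is $(2\eps,+)$-regular in $D'$.

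\emph{Step 3 (super-regularity).} Suppose additionally that $\deg^{+}(x,Y)\geq(d-\eps)m_Y$ for all $x\in X$. Since $d_0\leq 1$, we have $p\geq d$. The new degree of each $x$ is binomial with mean at least
\[
p(d-\eps)m_Y \geq d(d-\eps)m_Y .
\]
By Chernoff with relative error $\eps$ and a union bound over the $m_X$ vertices, every $x$ has
\[
\deg^{+}_{D'}(x,Y) \geq (1-\eps)\,d(d-\eps)\,m_Y \geq (d^2-2\eps)\,m_Y
\]
with high probability. Together with Steps 1 and 2, this gives $(2\eps,d^2)$-super-regularity (the deficit $2\eps$ matches the regularity parameter $2\eps$).

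Intersecting the high-probability events of Steps 1--3 yields a choice of $D'$ with all the stated properties. The main (mild) obstacle is the union bound over all significant subset pairs in Step 2, which works because the Chernoff exponent grows like $m_Xm_Y$.
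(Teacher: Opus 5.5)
Your proposal is correct and takes the same route as the paper's sketch: you randomly delete $X\to Y$ arcs with a retention probability chosen so the expected density drops to $d$, then apply Chernoff bounds with a union bound over significant subset pairs and over vertices. The extra assumptions you rely on are $\eps\ll d$ (needed in Steps 2 and 3) and clusters large in terms of $\eps$ and $d$. Both are available wherever the paper applies the fact.
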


Finally, 
to satisfy condition (C8) in the application of this lemma within our proof
of Theorem~\ref{thm:main}, 
it suffices to ensure that the arcs of a Hamilton cycle in the
reduced graph correspond to $(\eps,d)$-super-regular pairs
of cluster. It is well known that this can be done by removing a small
proportion of vertices from each cluster (see Proposition 10 in~\cite{kelly}).

\begin{fact}
  \label{fact:super_regular}
  Let $\Delta \in \NN$, and let $\eps \leq 1/(4\Delta)$. Also,
  let $D$ be a digraph, and let~$D^{*}$ and~${\{V_0,\dots,V_k\}}$ be the regularised digraph
  of $D$ and a partition of~$V(D)$, respectively, obtained by applying
  Lemma~\ref{lemma:diregularity} with parameters $\eps, d$ and $m_0$, for some $m_0 \in \NN$
  and~$d \leq 1$.
  Let $R$ be the $(\eps,d)$-reduced digraph of $D$ with respect to $D^{*}$ and $\{V_0,\dots,V_k\}$.
  If~$S$ is a subgraph of~$R$ with
  $\Delta(S) \leq \Delta$, then one can move exactly $\ceil{2\eps\Delta|V_i|}$ vertices 
  from each~${V_i \in V(S)}$ to~$V_0$ to obtain a new partition
  $\{V_0',\dots,V_k'\}$ of $V(D)$ such that
  \begin{itemize}
      \item $(V_i',V_j')$ is a $(2\eps,d - 3\eps\Delta)$-super-regular pair, for every $(V_i,V_j) \in A(S)$, and
      \item $(V_i',V_j')$ is $(2\eps, +)$-regular with density at least $d - 3\eps\Delta$, for every $(V_i,V_j) \in A(R)$.
  \end{itemize}
\end{fact}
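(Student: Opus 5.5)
The plan is to follow the standard argument behind Proposition~10 of~\cite{kelly}, treating each arc of $S$ and each in/out direction separately. Fix $V_i \in V(S)$. For each arc $(V_i,V_j)\in A(S)$, the pair $(V_i,V_j)$ is $(\eps,+)$-regular with density at least $d$ in $D^*$, since it is an arc of $R$. Define the set of \emph{bad} vertices $B_i^{j} \subseteq V_i$ as those $x$ with $\deg^+_{D^*}(x,V_j) < (d-\eps)|V_j|$. We claim $|B_i^{j}| < \eps|V_i|$. Otherwise $B_i^j$ is $\eps$-significant, and the pair $(B_i^j, V_j)$ would have density below $d-\eps \le d^+(V_i,V_j)-\eps$, contradicting regularity. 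Similarly, for arcs $(V_j,V_i)\in A(S)$ we let $B_i^{j,-}$ be the vertices of $V_i$ with few in-neighbours in $V_j$, and the same argument gives fewer than $\eps|V_i|$ of them. Since $V_i$ has at most $\Delta$ incident arcs in $S$, the union $B_i$ of all bad sets has $|B_i| \le \eps\Delta|V_i| \le \ceil{2\eps\Delta|V_i|}$. We move $B_i$ to $V_0$ and then pad it with arbitrary further vertices of $V_i$ so that exactly $\ceil{2\eps\Delta|V_i|}$ vertices are removed from each cluster in $V(S)$. Clusters not in $V(S)$ are left untouched.

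Next we verify the properties. Write $m=|V_i|$ and $m'=|V_i'| \ge (1-2\eps\Delta)m - 1 \ge m/2$, using $\eps\le 1/(4\Delta)$. For regularity of $(V_i',V_j')$, take $X'\subseteq V_i'$ and $Y'\subseteq V_j'$ with $|X'|\ge 2\eps m'$ and $|Y'|\ge 2\eps |V_j'|$. Then $|X'|\ge \eps m$, so both sets are $\eps$-significant in the original pair. Hence $d^+(X',Y')$ and $d^+(V_i',V_j')$ are each within $\eps$ of $d^+(V_i,V_j)$, and therefore within $2\eps$ of each other. This gives $(2\eps,+)$-regularity for every arc of $R$, and the density of $(V_i',V_j')$ is at least $d-\eps \ge d-3\eps\Delta$.

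For super-regularity along an arc $(V_i,V_j)\in A(S)$, every $x\in V_i'$ is good, so it has at least $(d-\eps)m$ out-neighbours in $V_j$. Of these, at most $\ceil{2\eps\Delta m}$ were removed, so $x$ still has at least $(d-\eps-2\eps\Delta)m - 1 \ge (d-3\eps\Delta - 2\eps)|V_j'|$ out-neighbours in $V_j'$, using $|V_j'|\le m$ and $\eps\Delta\ge\eps$ for the bookkeeping. The in-direction condition, for $y\in V_j'$ with respect to $V_i'$, holds by the symmetric argument, since $y$ avoided the in-bad set of $V_j$ for the arc $(V_i,V_j)$.

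The only delicate point is matching constants to the precise meaning of ``$(2\eps, d-3\eps\Delta)$-super-regular''. Super-regularity of the pair requires degree at least $(d' - 2\eps)$ times the cluster size with $d'=d-3\eps\Delta$, and the slack $2\eps$ absorbs the $-1$ rounding and the removed vertices. I expect this constant-chasing to be the main, though minor, obstacle. If needed, I will relax to the statement's constants by noting that $\eps \le 1/(4\Delta)$ keeps $m'\ge m/2$, so all rounding errors are $O(\eps\Delta m)$.
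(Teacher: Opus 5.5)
Your proposal is correct and is essentially the standard argument behind Proposition~10 of~\cite{kelly}, which the paper cites instead of giving a proof. You remove the fewer than $\eps|V_i|$ low-degree vertices for each of the at most $\Delta$ arcs of $S$ at $V_i$, pad to exactly $\ceil{2\eps\Delta|V_i|}$, and transfer regularity using $|V_i'|\ge |V_i|/2$.

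There is one small caveat. The inequality $(1-2\eps\Delta)m-1\ge m/2$ is false at the endpoint $\eps=1/(4\Delta)$, and absorbing the $-1$ in the degree bound also needs room. So you need $m$ large and $\eps$ slightly below $1/(4\Delta)$. This is harmless, since in every application $\eps\ll 1/\Delta$.
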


We will also need the well-known Slicing Lemma (see, for instance, Fact 1.5 of~\cite{ks}).

\begin{fact}[Slicing Lemma]
  \label{fact:slicing}
  Let $D$ be a digraph, and let $(X,Y)$ be an $\eps$-regular pair
  in~$D$ with density at least $d$. For every $\alpha > \eps$, and
  $\alpha$-significant sets $X' \subseteq X$ and $Y' \subseteq Y$ the
  pair~${(X',Y')}$ is~$\eps'$-regular with density
  at least $d - \eps$, where $\eps' = \max\{\eps/\alpha, 2\eps\}$.
\end{fact}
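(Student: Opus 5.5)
The plan is to work directly from the definition of $(\eps,\diamond)$-regularity, using the triangle inequality to compare every relevant density with the density $d^{\diamond}(X,Y)$ of the original pair. The statement is read for a fixed direction $\diamond\in\{+,-\}$ (in the paper it is $+$), and the argument does not depend on which one. Write $\delta_0 = d^{\diamond}(X,Y) \geq d$.

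\emph{Step 1 (density).} Since $\alpha > \eps$, we have $|X'| \geq \alpha|X| \geq \eps|X|$ and $|Y'| \geq \eps|Y|$. So $X'$ and $Y'$ are $\eps$-significant subsets of $X$ and $Y$. The $\eps$-regularity of $(X,Y)$ then gives $|d^{\diamond}(X',Y') - \delta_0| \leq \eps$. In particular $d^{\diamond}(X',Y') \geq \delta_0 - \eps \geq d - \eps$, which is the claimed density bound.

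\emph{Step 2 (regularity).} Let $X'' \subseteq X'$ and $Y'' \subseteq Y'$ be $\eps'$-significant, that is, $|X''| \geq \eps'|X'|$ and $|Y''| \geq \eps'|Y'|$. Since $\eps' \geq \eps/\alpha$, we get
\[
|X''| \geq \frac{\eps}{\alpha}\,|X'| \geq \frac{\eps}{\alpha}\,\alpha|X| = \eps|X|,
\]
and likewise $|Y''| \geq \eps|Y|$. So $X''$ and $Y''$ are also $\eps$-significant in the original pair, and regularity of $(X,Y)$ gives $|d^{\diamond}(X'',Y'') - \delta_0| \leq \eps$. Combining this with Step 1 by the triangle inequality,
\[
|d^{\diamond}(X'',Y'') - d^{\diamond}(X',Y')| \leq |d^{\diamond}(X'',Y'') - \delta_0| + |\delta_0 - d^{\diamond}(X',Y')| \leq 2\eps \leq \eps'.
\]
This is exactly $\eps'$-regularity of $(X',Y')$.

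The argument is a direct verification and has no real obstacle. The only point needing care is the choice $\eps' = \max\{\eps/\alpha, 2\eps\}$: each term is used once. The term $\eps/\alpha$ makes $\eps'$-significant subsets of $X'$ and $Y'$ $\eps$-significant in $X$ and $Y$. The term $2\eps$ absorbs the error from passing through $\delta_0$ twice in the triangle inequality.
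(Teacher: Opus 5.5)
Your proof is correct: every $\eps'$-significant pair $(X'',Y'')$ inside $(X',Y')$ is $\eps$-significant in $(X,Y)$ because $\eps'\ge\eps/\alpha$, and the triangle inequality through $d^{\diamond}(X,Y)$ then gives the bound $2\eps\le\eps'$. The paper gives no proof of its own and simply cites Fact~1.5 of Koml\'os--Simonovits; your direct verification from the definition is the standard argument behind that fact.
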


\section{Robust expansion and skewed-traverses}
\label{sec:expansion}

Given a digraph~$D$ on $n$ vertices, a set~${X \subseteq V(D)}$, and a number $\nu > 0$,
the \defi{$\nu$-robust out-neighbourhood} of~$X$,
denoted by $\RN^{+}_{\nu,D}(X)$, is the set consisting of
all vertices in $D$ that receive at least $\nu n$ arcs from
vertices in $X$, i.e.
\[
    \RN^{+}_{\nu,D}(X) = \{ x \in V(D) : |N^{-}(x)\cap X| \geq \nu n\}.
\]
Recall that $D$ is a \defi{robust $(\nu,\tau)$-out-expander}, for some $\tau > 0$,
if $|\RN^{+}_{\nu,D}(X)| \geq |X| + \nu n$,
for all $X \subseteq V(D)$ with $\tau n < |X| < (1-\tau) n $. We define the
notions of \defi{robust in-neighbourhood} and \defi{robust in-expander} analogously.
Finally, we say that $D$ is a robust \defi{$(\nu,\tau)$-expander} if it is a robust $(\nu,\tau)$-out-expander
and also a robust $(\nu,\tau)$-in-expander. It is known
that robust out-expansion implies robust in-expansion (see Proposition 48 in~\cite{taylor}).
Moreover, between any pair of vertices in a robust out-expander, there exists a directed path of constant length.

\begin{lemma}[DeBiasio; Taylor (see Lemma 58 in~\cite{taylor})]
    \label{lemma:in_expansion_paths}
    Let $0 < \nu \leq \tau \ll \gamma < 1$. 
    Let $D$ be a robust~$(\nu,\tau)$-out-expander on~$n$ vertices with $\semidegree(D) \geq \gamma n$. Then 
    \begin{enumerate}
        \item $D$ is  a robust~$(\nu^3,2\tau)$-in-expander, and
        \item for every pair of distinct vertices $u,v \in V(D)$, there is a directed path
              of length~$\ceil{1/\nu}$ from~$u$ to~$v$.
    \end{enumerate}
\end{lemma}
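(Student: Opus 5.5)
For part~(1), I would argue by contradiction, playing robust out-expansion of the ``bad'' complement against the assumed failure of in-expansion. Fix $S$ with $2\tau n<|S|<(1-2\tau)n$. Let $X=\RN^{-}_{\nu^3,D}(S)$ be the set of vertices sending at least $\nu^3 n$ arcs into $S$, and suppose $|X|<|S|+\nu^3 n$. Put $Y=V(D)\setminus X$. Every vertex of $Y$ sends fewer than $\nu^3 n$ arcs into $S$, so $Y$ sends fewer than $\nu^3 n^2$ arcs into $S$ in total. Hence at most $\nu^2 n$ vertices of $S$ receive $\nu n$ or more arcs from $Y$, which gives
\[
|\RN^{+}_{\nu,D}(Y)|\le n-|S|+\nu^2 n .
\]

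Next I would check that $Y$ lies in the range where out-expansion applies, i.e.\ $\tau n<|Y|<(1-\tau)n$.
\begin{itemize}
\item If $|Y|\le \tau n$, then $|X|\ge(1-\tau)n$. This contradicts $|X|<|S|+\nu^3 n<(1-2\tau+\nu^3)n$.
\item If $|Y|\ge(1-\tau)n$, then $|X|\le\tau n$. Count the arcs entering $S$: there are at least $\gamma n|S|$ of them. On the other hand there are at most $|X||S|+\nu^3n^2\le \tau n|S|+\nu^3 n^2$. So $(\gamma-\tau)|S|n\le\nu^3n^2$, which is impossible since $|S|>2\tau n$ and $\nu\le\tau\ll\gamma$.
\end{itemize}
So out-expansion gives
\[
|\RN^{+}_{\nu,D}(Y)|\ge|Y|+\nu n> n-|S|-\nu^3 n+\nu n> n-|S|+\nu^2 n,
\]
contradicting the upper bound above. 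This proves~(1).

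For part~(2), set $L=\ceil{1/\nu}$ and grow directed paths from $u$ one step at a time. Define $A_0=\{u\}$, $A_1=N^+(u)$, and for $i\ge1$ let $A_{i+1}=\RN^{+}_{\nu,D}(A_i)$. The invariant I want is: every vertex of $A_i$ is the endpoint of a directed path of length exactly $i$ from $u$, with $|A_i|\ge \min\{\gamma n+(i-1)\nu n,\,(1-\tau)n\}$.
\begin{itemize}
\item The size bound follows from robust out-expansion while $\tau n<|A_i|<(1-\tau)n$. We start with $|A_1|\ge\gamma n>\tau n$.
\item Once $|A_i|\ge(1-\tau)n$, every vertex $x$ has at least $(\gamma-\tau)n\ge\nu n$ in-neighbours in $A_i$, so $A_{i+1}=V(D)$ and the bound persists.
\item Since $(1-\tau-\gamma)/\nu\le L-2$, we get $|A_{L-1}|\ge(1-\tau)n$.
\end{itemize}
To close the path at $v$, note that $|N^-(v)\cap A_{L-1}|\ge(\gamma-\tau)n$, so we can pick a last internal vertex $w\ne u$ there and then trace back. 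Tracing back, a vertex of $A_{i+1}$ has at least $\nu n$ in-neighbours in $A_i$. At each backward step only $O(1/\nu)$ vertices are forbidden: those already on the path, together with $u$ and $v$. So a fresh predecessor always exists. Reading the chosen vertices forwards gives a directed $u$--$v$ path of length exactly $L$ with distinct vertices.

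The main obstacle I expect is exactly this bookkeeping in~(2): guaranteeing that we get a path rather than a walk, and that the length is exactly $\ceil{1/\nu}$. The plan is to handle both through the robustness (at least $\nu n$ choices of predecessor at each step) and the stabilisation $A_{i+1}=V(D)$ once the sets exceed $(1-\tau)n$. One must also take care that the first step from $u$ avoids $v$ and that $A_1$ is large enough to start the expansion; both are fine because $\semidegree(D)\ge\gamma n$.
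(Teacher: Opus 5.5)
The paper gives no proof of this lemma; it imports it from DeBiasio and Taylor, so your argument is a self-contained substitute rather than a reconstruction. It is correct and follows the standard lines. For (1), you compare the robust out-neighbourhood of $Y=V(D)\setminus\RN^{-}_{\nu^3,D}(S)$ with the fact that $Y$ sends fewer than $\nu^3n^2$ arcs into $S$, and the semidegree is needed only to exclude $|Y|\ge(1-\tau)n$. For (2), you iterate robust out-neighbourhoods from $N^+(u)$ until all but $\tau n$ vertices are covered, then pick predecessors greedily backwards from $v$. The numerical checks you need all hold under $\nu\le\tau\ll\gamma$: $\nu^3\le\tau$, $2\tau(\gamma-\tau)>\nu^3$, $\nu-\nu^3\ge\nu^2$, and $\gamma+(L-2)\nu\ge 1-\tau$.

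You should state one hypothesis explicitly. The backward greedy step needs $\nu n$ to exceed the number of forbidden vertices, roughly $\lceil 1/\nu\rceil+2$, i.e.\ $n=\Omega(\nu^{-2})$. This is not among the stated hypotheses, but it is genuinely needed. Take $\max\{3,(1-\gamma)^{-1}\}\le n<1/\nu$ and the complete digraph on $n$ vertices. Its minimum semidegree is at least $\gamma n$, and it is a robust $(\nu,\tau)$-out-expander, since for $\nu n<1$ robust out-neighbourhoods are just ordinary out-neighbourhoods. Yet it has no path of length $\lceil 1/\nu\rceil$. So the lemma tacitly assumes $1/n\ll\nu$, which is how the paper applies it.

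Two smaller points. First, your forward invariant only produces walks ending in $A_i$; the path property comes from the backward selection. State the invariant instead as the size bound on $A_i$ together with $A_{i+1}=\RN^{+}_{\nu,D}(A_i)$. Second, the paper measures path length by the number of vertices. If you want $\lceil 1/\nu\rceil$ vertices rather than arcs, close at $v$ from $A_{L-2}$. This works because $|A_{L-2}|\ge(1-\tau)n$ still holds, since $3\nu\le\tau+\gamma$.
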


By combining an application of the Diregularity Lemma with a straightforward use of Chernoff's bound (Lemma~\ref{lemma:chernoff}), Taylor showed (Lemma 60 in~\cite{taylor}) that every dense robust expander can be split
into two dense robust expanders of approximately equal size. The next lemma generalises this result
to partitions of arbitrary proportions. The proof is an immediate adaptation of Taylor's and introduces no new ideas,
so we omit the details.

\begin{lemma}
    \label{lemma:partition}
    Let $0 < 1/n \ll \nu' \ll \nu \leq \tau \ll \tau' \ll  \mu, \gamma < 1$.
    Let $D$ be a digraph on $n$ vertices with $\semidegree(D) \geq  \gamma n$
    and suppose that $D$ is a robust $(\nu,\tau)$-expander. Then 
    there is a partition~${\{A,B\}}$ of $V(D)$ such that
    \begin{enumerate}[(a)]
        \item $|A| = \ceil{\mu n}$,
        \item $|N^{\diamond}(v,A)| \geq \frac{\gamma}{2}|A|$ for all $v \in V(D)$ and  $\diamond \in \{+,-\}$, 
        \item $|N^{\diamond}(v,B)| \geq \frac{\gamma}{2}|B|$ for all $v \in V(D)$ and  $\diamond \in \{+,-\}$, and
        \item $D[A]$ and $D[B]$ are  robust $(\nu',\tau')$-expanders.
    \end{enumerate}
\end{lemma}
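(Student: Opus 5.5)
The plan is to adapt Taylor's argument (Lemma 60 in~\cite{taylor}): choose the set $A$ at random, check the degree conditions (b) and (c) by Chernoff, and transfer robust expansion to $D[A]$ and $D[B]$ through the reduced digraph. Each vertex goes into $A$ independently with probability $p=\mu$ (then we move $O(\sqrt{n\log n})$ vertices to make $|A|=\ceil{\mu n}$ exactly). For each $v$ and $\diamond$, $|N^\diamond(v,A)|$ is binomial with mean at least $\gamma\mu n$. Lemma~\ref{lemma:chernoff} and a union bound over the $2n$ choices give, with high probability, $|N^\diamond(v,A)|\ge (\gamma-\gamma/10)|A|$ and similarly for $B$. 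The final size correction changes these counts by $o(n)$, which gives (a)--(c).

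For (d), expansion cannot be handled by a union bound over all sets $S$, since there are exponentially many. We therefore first apply Lemma~\ref{lemma:diregularity} to $D$ with $1/n\ll\eps\ll d\ll\nu'$, obtaining clusters $V_1,\dots,V_k$ with $k\le M_0$ and a reduced digraph $R$. By Lemma~\ref{lemma:reduced}, $R$ is a robust $(\nu/24,2\tau)$-out-expander; by Lemma~\ref{lemma:in_expansion_paths}, the same holds for in-expansion with $\nu^3$. The random choice of $A$ is made after fixing this partition. Chernoff and a union bound over only $O(k)$ events show that each cluster satisfies $|V_i\cap A|=(\mu\pm\eps)|V_i|$ and $|V_i\cap B|=(1-\mu\pm\eps)|V_i|$. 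Since $\mu\gg\tau'\gg\eps$, these pieces are $\eps$-significant in $V_i$. By Fact~\ref{fact:slicing}, every arc of $R$ then gives an $(\eps/\mu,+)$-regular pair of density at least $d-\eps$ between $V_i\cap A$ and $V_j\cap A$, and likewise for $B$.

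Now take $S\subseteq A$ with $\tau'|A|\le|S|\le(1-\tau')|A|$. Let $\cal$ be the set of clusters $V_i$ with $|S\cap V_i|\ge \eps'|V_i\cap A|$, where $\eps'$ is a small constant. Ignoring the other clusters and $V_0$ loses at most $(\eps'+\eps)n$ vertices of $S$. Hence $|\cal|$ (scaled by $\mu m$) lies between roughly $\tau' k$ and $(1-\tau'/2)k$, so $\cal$ falls in the expansion range $2\tau\le|\cal|/k\le 1-2\tau$ of $R$, because $\tau\ll\tau'$. Robust expansion of $R$ gives at least $|\cal|+\nu k/24$ clusters $V_j$, each receiving at least $(\nu/24)k$ arcs from $\cal$. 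By regularity of the sliced pairs, all but an $\eps$-fraction of vertices in $V_j\cap A$ have at least $(d-2\eps)|S\cap V_i|$ in-neighbours in $S\cap V_i$ for each such $V_i$. Summing over the $\nu k/24$ in-neighbouring clusters gives at least $(d-2\eps)\eps'\mu\cdot(\nu/24)n\ge\nu'|A|$ in-neighbours from $S$, using $\nu'\ll d,\eps',\nu$. So $\RN^+_{\nu',D[A]}(S)$ contains about $(|\cal|+\nu k/24)(1-\eps)\mu m\ge |S|+\nu'|A|$ vertices. In-expansion is identical with arcs reversed, and $B$ is symmetric.

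The main obstacle is the loss in the cluster-level comparison. Expressing $|S|$ in terms of $|\cal|$ overcounts by the partially-filled clusters, since each cluster in $\cal$ contributes at most $\mu m$ to $S$. We must still show that $|\cal|\mu m+(\nu/24)k\mu m$ exceeds $|S|+\nu'|A|$. This requires $\eps'\ll\nu$ and careful ordering of the constants $\eps\ll\eps'\ll d\ll\nu'\ll\nu$. Because $d\ll\nu'$ conflicts with the use of $d$ in the robust neighbourhood bound above, we instead choose $d$ with $\nu'\ll d\ll\nu$. This is still compatible with Lemma~\ref{lemma:reduced}, as long as $\nu'$ is chosen last.
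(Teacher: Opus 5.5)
Your overall route is the one the paper relies on. The paper omits the proof and refers to an adaptation of Taylor's Lemma~60, which combines the Diregularity Lemma with Chernoff's bound as you do. Your treatment of (a)--(c) is fine. In (d), however, the claim that $|\mathcal{C}|$ lies between roughly $\tau'k$ and $(1-\tau'/2)k$ is false. Only the lower bound follows from $|S|\ge\tau'|A|$, because $\mathcal{C}$ contains every cluster that $S$ meets in an $\eps'$-fraction. For instance, if $S$ consists of half of every $V_i\cap A$, then $|S|\approx|A|/2$ but $\mathcal{C}=V(R)$, and there robust expansion of $R$ tells you nothing. Your comparison is correct when $2\tau k<|\mathcal{C}|<(1-2\tau)k$, but you need a second case. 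If $|\mathcal{C}|\ge(1-2\tau)k$, apply expansion of $R$ to a subset $\mathcal{C}'\subseteq\mathcal{C}$ with $|\mathcal{C}'|=\lceil(1-3\tau)k\rceil$. Since robust out-neighbourhoods are monotone, at least $(1-3\tau)k$ clusters receive at least $\nu k/24$ arcs from $\mathcal{C}$. By the same per-cluster argument, $\RN^+_{\nu',D[A]}(S)$ then has about $(1-3\tau)|A|\ge(1-\tau'+\nu')|A|\ge|S|+\nu'|A|$ vertices, using $\tau,\nu'\ll\tau'$.

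The second gap is your claim that ``all but an $\eps$-fraction of vertices in $V_j\cap A$'' have many in-neighbours in $S\cap V_i$ ``for each such $V_i$''. This holds only for each fixed $V_i$ separately. The exceptional set $B_i\subseteq V_j\cap A$, of size at most about $(\eps/\mu)|V_j\cap A|$, depends on $i$. There are $\Theta(k)$ relevant indices, so the union of the $B_i$ may be all of $V_j\cap A$, and you cannot sum over $i$ for a single vertex. Fix this by double counting. At most $(2\eps/\mu)|V_j\cap A|$ vertices lie in more than half of the $B_i$. Every other vertex of $V_j\cap A$ receives at least $(\nu k/48)(d/2)\eps'(\mu-\eps)m\ge\nu'|A|$ arcs from $S$. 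With these two repairs the argument goes through, using your final ordering of constants: $\eps\ll\eps'\ll d\ll\nu$ are all fixed before $\nu'$, and $\eps'\ll\nu$, so that the loss $\eps'|A|$ in $|S|\le|\mathcal{C}|(\mu+\eps)m+\eps'|A|+\eps n$ is absorbed. Note also that the $B$ side needs $1-\mu\gg\eps$. The statement leaves this implicit, but it holds in the application, where $\mu\lesssim 1/2$.
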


We close this section with the definition of skewed-traverses in digraphs.
Let~${C=(v_1,v_2,...,v_k)}$ induce a directed Hamilton cycle in a digraph~$D$. 
Given two vertices~$v_i$ and~$v_j$ in $D$, a \defi{$(v_i,v_j)$-skewed-traverse} is a collection of arcs of the
form~$\big( (v_iv_{i_1}),(v_{i_1-1}v_{i_2}),(v_{i_2-1},v_{i_3})\dots (v_{i_t-1}v_j)\big)$. See Figure~\ref{fig:skewed} for an example.
The \defi{length} of a $(v_i,v_j)$-skewed traverse 
is the number of arcs in the collection \footnote{We remark that Kelly defined the length of a skewed traverse as the number of arcs minus one, but for our purposes, it is more convenient to define it as the number of arcs.}. Taylor~\cite{taylor}
showed that in robust expanders, every pair of distinct vertices is connected by a skewed-traverse.
  
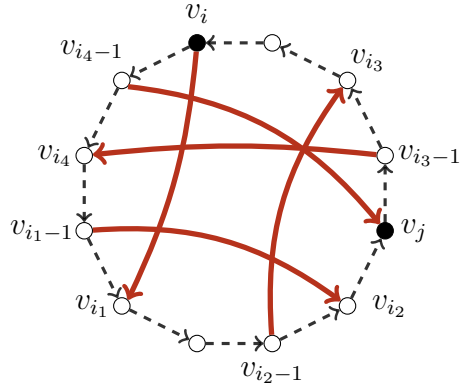
\begin{figure}[ht!]
    \centering
    \begin{tikzpicture}[scale=0.5]
		\node [style=vertex, fill=black] (0) at (4, 16) {};
        \node [above of=0, node distance = 11pt] {$v_i$};
		\node [style=vertex] (1) at (2, 15) {};
        \node [above left of=1, node distance = 16pt] {$v_{i_4-1}$};
		\node [style=vertex] (2) at (1, 13) {};
        \node [left of = 2, node distance = 11pt] {$v_{i_4}$};
		\node [style=vertex] (3) at (1, 11) {};
        \node [left of=3, node distance = 16pt] {$v_{i_1-1}$};
        \node [style=vertex] (4) at (2, 9) {};
        \node [left of=4, node distance = 11pt] {$v_{i_1}$};
        \node [style=vertex] (5) at (4, 8) {};
        \node [style=vertex] (6) at (6, 8) {};
        \node [below of = 6, node distance = 11pt] {$v_{i_2-1}$};
        \node [style=vertex] (7) at (8, 9) {};
        \node [right of = 7, node distance = 16pt] {$v_{i_2}$};
        \node [style=vertex, fill=black] (8) at (9, 11) {};
        \node [right of=8, node distance = 11pt] {$v_j$};
		\node [style=vertex] (9) at (9, 13) {};
        \node [right of = 9, node distance = 17pt] {$v_{i_3-1}$};
		\node [style=vertex] (10) at (8, 15) {};     
        \node [above right of = 10, node distance = 11pt] {$v_{i_3}$};
		\node [style=vertex] (11) at (6, 16) {};

		\draw [style=edge,dashed] (0) to (1);
		\draw [style=edge,dashed] (1) to (2);
		\draw [style=edge,dashed] (2) to (3);
		\draw [style=edge,dashed] (3) to (4);
		\draw [style=edge,dashed] (4) to (5);
		\draw [style=edge,dashed] (5) to (6);
		\draw [style=edge,dashed] (6) to (7);
        \draw [style=edge, dashed] (7) to (8);
        \draw [style=edge, dashed] (8) to (9);
        \draw [style=edge, dashed] (9) to (10);
        \draw [style=edge, dashed] (10) to (11);
        \draw [style=edge, dashed] (11) to (0);

        \draw [style=cobra, bend left=10] (0) to (4.north east);
        \draw [style=cobra, bend left=20] (3) to (7);
        \draw [style=cobra, bend left=20] (6) to (10);
        \draw [style=cobra, bend right=6] (9) to (2);
        \draw [style=cobra, bend left=22] (1.south east) to (8);

\end{tikzpicture}
    \caption{An example of a $(v_i,v_j)$-skewed-traverse of length 5.}
    \label{fig:skewed}
\end{figure}

  \begin{lemma}[\cite{taylor}]
    \label{lemma:skewed}
    Let $0 < \nu \leq \tau \ll \gamma < 1$.
    Let $D$ be a robust $(\nu,\tau)$-expander digraph on~$n$ vertices
    with $\semidegree(D) \geq \gamma n$.
    Let $C=(v_1,\dots,v_k)$ be a directed Hamilton cycle of $D$.
    For any distinct $u,v \in V(D)$,
    there exists a $(u,v)$-skewed-traverse of length at most~${\ceil{1/\nu}+ 1}$.
  \end{lemma}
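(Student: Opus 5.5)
The plan is to reduce skewed-traverses to ordinary directed paths in an auxiliary ``shifted'' digraph, and then apply part~(2) of Lemma~\ref{lemma:in_expansion_paths}. For a vertex $x=v_i$ of $C$ write $x^-=v_{i-1}$ and $x^+=v_{i+1}$ (indices modulo $k$). Define a digraph $D'$ on $V(D)$ by declaring $(x,y)\in A(D')$ if and only if $(x^-,y)\in A(D)$. In other words, $N^+_{D'}(x)=N^+_D(x^-)$ and $N^-_{D'}(y)=\{x : x^-\in N^-_D(y)\}$, which is a bijective image of $N^-_D(y)$.

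The key observation is that a $(u,v)$-skewed-traverse
\[
\big((u,v_{i_1}),(v_{i_1-1},v_{i_2}),\dots,(v_{i_t-1},v)\big)
\]
is exactly a walk $u^+\to v_{i_1}\to v_{i_2}\to\dots\to v$ in $D'$. Indeed, the first arc $(u,v_{i_1})=((u^+)^-,v_{i_1})$ is an arc of $D'$ out of $u^+$, and each subsequent arc $(v_{i_s-1},v_{i_{s+1}})$ is the $D'$-arc $(v_{i_s},v_{i_{s+1}})$. Its length equals the number of $D'$-arcs. So it suffices to find, in $D'$, a directed path from $u^+$ to $v$ of length at most $\ceil{1/\nu}+1$.

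Next I will check that $D'$ satisfies the hypotheses of Lemma~\ref{lemma:in_expansion_paths}. Semidegrees are preserved exactly, since $|N^\pm_{D'}(x)|=|N^\pm_D(\cdot)|$ via the bijection $x\mapsto x^-$. For robust out-expansion, the in-neighbours in $D'$ of a vertex $y$ lying in $X$ correspond bijectively to the in-neighbours in $D$ of $y$ lying in $X^-=\{x^-:x\in X\}$. Hence $\RN^+_{\nu,D'}(X)=\RN^+_{\nu,D}(X^-)$, and since $|X^-|=|X|$, $D'$ is a robust $(\nu,\tau)$-out-expander.

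The only nuisance is that $D'$ may contain loops $(x,x)$, arising from the cycle arcs $(x^-,x)$, and may fail to be oriented. Neither matters for the path-finding argument. Deleting loops changes each semidegree by at most $1$ and each count $|N^-(y)\cap X|$ by at most $1$, which is absorbed by $1/n\ll\nu$. If one insists on applying the lemma verbatim, this costs passing to parameters such as $(\nu/2,\tau)$ and $\gamma/2$, and I would track this constant carefully. The main technical point I expect is making sure this loss does not change the length bound $\ceil{1/\nu}$ from Lemma~\ref{lemma:in_expansion_paths}; failing that, I would re-run its short proof directly for $D'$, which iterates robust out-neighbourhoods from $\{x\}$ and robust in-neighbourhoods from $\{v\}$ until they meet.

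Finally, there are two cases.
\begin{itemize}
\item If $u^+\neq v$, Lemma~\ref{lemma:in_expansion_paths}(2) applied to $D'$ gives a directed path from $u^+$ to $v$ of length $\ceil{1/\nu}$, hence a skewed-traverse of that length.
\item If $u^+=v$, pick any $w\in N^+_{D'}(u^+)\setminus\{v\}$, which exists since $\semidegree(D')\ge\gamma n-1>1$. Then apply the lemma to get a $D'$-path from $w$ to $v$ of length $\ceil{1/\nu}$. Prepending the arc $(u^+,w)$, i.e. the $D$-arc $(u,w)$, yields a $(u,v)$-skewed-traverse of length $\ceil{1/\nu}+1$.
\end{itemize}
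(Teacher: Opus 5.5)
The paper gives no proof of this lemma; it is quoted from Taylor. So there is no in-paper argument to compare with, and judged on its own your reduction is correct. The shift turns a $(u,v)$-skewed-traverse with $t+1$ arcs into a $D'$-walk from $u^+$ to $v$ with $t+1$ arcs. The identity $\RN^+_{\nu,D'}(X)=\RN^+_{\nu,D}(X^-)$, together with preservation of semidegrees, transfers the hypotheses with no loss. In effect, $D'$ packages the natural iteration $S\mapsto \RN^+_{\nu,D}(S^-)$ along $C$ as breadth-first search.

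Two points should be tightened.

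First, do not pass to $(\nu/2,\tau)$: that gives paths of length $\ceil{2/\nu}$ and breaks the bound. You have two ways out.
\begin{enumerate}[(i)]
\item Keep the loops and rerun the path argument in $D'$. Take $S_1=N^+_{D'}(u^+)$ and $S_{j+1}=\RN^+_{\nu,D'}(S_j)$. The sizes grow by $\nu n$ per step until $|S_j|\ge (1-\tau)n$, which happens for some $j\le \ceil{(1-\tau-\gamma)/\nu}+1$. Then $S_j$ meets $N^-_{D'}(v)$. Backtrack through the at least $\nu n$ in-neighbours available at each level, avoiding the $O(1/\nu)$ vertices already used (this needs $n$ large in terms of $\nu$, which is implicit). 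This gives a path with at most $\ceil{(1-\tau-\gamma)/\nu}+2\le\ceil{1/\nu}$ arcs, and it never uses a loop.
\item Alternatively, using $1/n\ll\nu$, delete the loops. This only replaces $\nu$ by $\nu-1/n$, which raises $\ceil{1/\nu}$ by at most one. That is enough when $u^+\neq v$.
\end{enumerate}

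Second, the case $u^+=v$ needs no detour. The cycle arc $(u,u^+)$ is itself a $(u,v)$-skewed-traverse of length $1$ (the case $t=0$ of the definition; it is the loop at $v$ in $D'$). Your detour stays within $\ceil{1/\nu}+1$ only if the path from $w$ has at most $\ceil{1/\nu}$ arcs. So it would overshoot by one under the loop-deletion variant, where that path may have $\ceil{1/\nu}+1$ arcs.
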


\section{Random oriented walks}
  \label{sec:random_walks}
 
  Let $P=(p_i)_{i\in [m]}$ be an oriented path and let~$\diamond_i\in \{+,-\}$
  be such that~$p_{i+1} \in N^{\diamond_i}(p_i)$ for all $i\in[m-1]$. Given a
  digraph~$H$ on $k$ vertices, the~$P$-random walk starting at~${u\in V(H)}$ is a random sequence
  of vertices~$(X_i)_{i\in [m]}$ in~$V(H)$ such that~${X_1=u}$ and given
  an outcome~${X_1, \dots, X_\ell}$ with~${\ell < m}$, we choose~${X_{\ell+1}}$
  uniformly at random among the vertices of~$N^{\diamond_i}(X_\ell)$. We denote
  the probability distribution induced by this process by~${\mathbb{P}_{P,u}}$
  or~$\mathbb{P}_u$ when $P$ is clear from context.

We will show that our host graphs, namely robust expanders of a certain minimum semidegree, have the property that the distribution  of the last vertex~$X_m$ of a $P$-random walk tends to a uniform distribution. For this, the following property will be useful.

  \begin{defn}
    \label{def:cherry}
    Let~${\alpha > 0}$ and let~$D$ be a digraph on $k$ vertices. For any disjoint sets~${X,Y\subset V(D)}$
    and $\diamond \in \{+,-\}$, we define
    \[
      \mathcal{C}^\diamond(X,Y) = \big\{ (x,y,z): x \in X, y \in Y, z \in N^\diamond(x)\cap N^\diamond(y)  \big\}.
    \]
    \noindent We say~$D$
    has the~\defi{$\alpha$-cherry property} if for every partition $\{X,Y\}$ of the
    vertices~$V(D)$ and each 
    $\diamond \in \{+,-\}$,
    we have $|{\mathcal{C}^\diamond(X,Y)}|\ge \alpha |X||Y|k$.
    Moreover, we say that~$D$ has the \defi{cherry property} 
    if ${\mathcal{C}^\diamond(X,Y)}\neq \emptyset$ for all $\diamond \in \{+,-\}$.
  \end{defn}
  
  Let us explain  why the cherry property is a natural concept 
  when considering mixing of random walks. In the
  undirected world, it is well known that a random walk in a graph mixes well if the host
  graph is regular, connected and non-bipartite, and it is easy to check that a graph
  satisfies the (analogous undirected) cherry property if and only if it is
  connected and non-bipartite. In direct analogy, we show in Proposition \ref{proposition:mixing} that the mixing of the random walk in the directed setting is likely to happen in regular digraph with the cherry property. In Proposition \ref{proposition:regular-cherry-subgraph} we show that a linear minimum semidegree and robust expansion guarantees the existence of a regular subgraph with the cherry property. We now state these two results.

  \begin{prop}
    \label{proposition:regular-cherry-subgraph}
    Let $0 < 1/k \ll \nu \leq \tau \ll \gamma < 1$.
    Let $D$ be a digraph on~$k$ vertices   
    with~${\delta^0(D)\geq \gamma k}$ and suppose that~$D$ is a robust $(\nu,\tau)$-expander.
    Then there exists~$H\subset D$ that
    is~regular and has the cherry property.
  \end{prop}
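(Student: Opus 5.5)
We will look for $H$ as a spanning subdigraph of $D$ that is $d$-regular for some $d\ge 2$. First we reformulate the cherry property. Fix $\diamond=+$; the case $\diamond=-$ is symmetric, and as shown below a regular $H$ satisfies both simultaneously. Define an auxiliary graph $\Gamma^+(H)$ on $V(D)$ in which $x\sim y$ whenever $x$ and $y$ have a common out-neighbour in $H$. Then $\mathcal C^+(X,Y)\neq\emptyset$ for every partition $\{X,Y\}$ if and only if $\Gamma^+(H)$ is connected.

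For a regular $H$ there is also a clean description of the obstruction. Suppose $N^+_H(X)\cap N^+_H(Y)=\emptyset$. Since $H$ is $d$-regular, counting arcs gives $|N^+_H(X)|\ge |X|$ and $|N^+_H(Y)|\ge |Y|$. These two disjoint sets lie in $V(D)$, so both inequalities are equalities, and $\{N^+_H(X),N^+_H(Y)\}$ is a partition of $V(D)$. Equality $|N^+_H(X)|=|X|$ means every arc of $H$ entering $X':=N^+_H(X)$ comes from $X$. Hence $H$ splits into two "blocks" $X\to X'$ and $Y\to Y'$ with no arcs between them. This description is symmetric in $+$ and $-$: $X'$ and $Y'$ have disjoint in-neighbourhoods $X$ and $Y$. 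So it suffices to build a regular $H$ that admits no such block partition.

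\textbf{Step 1: a regular spanning subgraph of linear degree.} Pass to the bipartite double cover $B$ of $D$, with parts $V^+,V^-$ and an edge $u^+v^-$ for each arc $(u,v)$. A $d$-regular spanning subdigraph of $D$ is exactly a $d$-factor of $B$. By the max-flow/min-cut (Gale--Ryser type) criterion, such a factor exists if and only if
\[
  e_B(S,T)\ \ge\ d\,(|S|+|T|-k)\quad\text{for all } S\subseteq V^+,\ T\subseteq V^-.
\]
Take $d=\nu^2k$ (any small linear degree will do), and verify this inequality in three regimes.
\begin{itemize}
\item If $|S|\le\tau k$ or $|T|\le \tau k$, the minimum semidegree $\gamma k$ gives it.
\item In the middle range, robust out-expansion applied to $S$ gives it. Here $T$ must miss most of $\RN^+_{\nu}(S)$ unless $|S|+|T|-k$ is small compared with $e(S,T)/d$.
\item Near-complete sets are handled by the in-expansion from Lemma~\ref{lemma:in_expansion_paths}(1).
\end{itemize}
This is routine but requires care with the constants.

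\textbf{Step 2: destroying block structures by switchings.} Among all $d$-regular spanning subdigraphs of $D$, choose $H$ so that $\Gamma^+(H)$ has the minimum number of components. Suppose $\Gamma^+(H)$ is disconnected. Then there is a block partition $X\to X'$, $Y\to Y'$ as above. Since every vertex has $H$-degree $d$, we have $|X|\ge d$ and $|Y|\ge d$. Similarly, the union of the blocks in any component has size at least $d$, so all block sizes lie in $[d,k-d]$.

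Robust expansion of $D$ now provides crossing arcs. Apply it to $X$ if $|X|\ge\tau k$, and use minimum semidegree otherwise. The robust out-neighbourhood of $X$ in $D$ exceeds $|X'|=|X|$, so there is an arc $(x,y')\in A(D)\setminus A(H)$ with $x\in X$ and $y'\in Y'$. Symmetrically there is an arc $(y,x')\in A(D)\setminus A(H)$ with $y\in Y$ and $x'\in X'$. In fact there are $\Omega(\nu k)$ choices for each, so we can pick them so that the arcs $(x,x'')$ and $(y,y'')$ of $H$ used below are not already present in reverse.

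Now choose $x''\in N^+_H(x)\subseteq X'$ and $y''\in N^+_H(y)\subseteq Y'$, and perform the switch
\[
  H' = H-(x,x'')-(y,y'')+(x,y')+(y,x'').
\]
This changes only out-neighbourhoods of $x,y$ and in-neighbourhoods of $x'',y',y''$. So we need $y'$ chosen as a vertex that loses an in-arc, i.e.\ take $y'=y''$; its in-degree is then preserved. We therefore pick the arc pair via a common target, or instead use a two-switch along an alternating $4$-cycle in $B$. We choose the version that preserves all degrees. $H'$ is again $d$-regular, and now $x$ and some vertex of $Y$ share an out-neighbour. One checks that no two previously connected components of $\Gamma^+$ become disconnected, because each block retains at least $d-1\ge1$ other arcs into each of its vertices. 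This contradicts the minimality of the number of components of $\Gamma^+(H)$.

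\textbf{Main obstacle.} I expect the hardest part to be this switching step. A degree-preserving switch needs four vertices $x\in X$, $y\in Y$, $x'\in X'$, $y'\in Y'$ such that $(x,x'),(y,y')\in A(H)$ and $(x,y'),(y,x')\in A(D)\setminus A(H)$. Finding two crossing $D$-arcs whose endpoints pair up in this way is not automatic. I would first try to show, by robust expansion, that the set of crossing $D$-arcs from $X$ to $Y'$ has $\Omega(\nu k^2)$ elements, and similarly from $Y$ to $X'$. Then counting, using $d$-regularity of $H$ inside each block, shows that some pair closes into an alternating $4$-cycle in $B$. If this fails, the fallback is to reserve, already in Step~1, a spanning set of crossing arcs. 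We would do this by running the flow argument on $D$ minus a sparse random subgraph $F$ that still robustly expands, and then re-adding $F$ through a second regular factor chosen inside $F\cup H$.
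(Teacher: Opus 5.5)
Your Step~1 is fine; minimum semidegree and robust out-expansion already verify the flow condition for any $d\le\nu k$. Your observation that, for regular $H$, a failure of the cherry property forces a splitting into blocks $X\to X'$, $Y\to Y'$ is also correct. The gap is in Step~2. A degree-preserving switch along an alternating $4$-cycle between two blocks need not exist, and no count of crossing arcs can force one.

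Here is a counterexample. Split $V(D)$ into $X_1,X_2,Y_1,Y_2$ of sizes $k/6,k/3,k/6,k/3$. Let $D$ consist of all arcs inside $X=X_1\cup X_2$, all arcs inside $Y=Y_1\cup Y_2$, and all arcs in both directions between $X_1$ and $Y_1$. Then $\delta^0(D)\ge k/2-1$, a short case check shows $D$ is a robust $(\nu,\tau)$-expander for $\nu\ll\tau$, and there are $k^2/36$ crossing arcs in each direction. Place $X$ on a cycle with every third vertex in $X_1$. Let $H_X$ join each vertex to the vertices $j$ steps ahead, for the first $d$ values $j\not\equiv 0 \pmod 3$. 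This $H_X$ is $d$-regular and has no arc inside $X_1$; $\Gamma^+(H_X)$ is connected because $j=1,2$ both occur. Define $H_Y$ likewise and set $H=H_X\cup H_Y$.

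Your switch needs $(x,x'),(y,y')\in A(H)$ and $(x,y'),(y,x')\in A(D)$ with $x,x'\in X$ and $y,y'\in Y$. All crossing arcs of $D$ join $X_1$ and $Y_1$, so this forces $x,x'\in X_1$, but $H$ has no arc inside $X_1$. Thus $\Gamma^+(H)$ has two components and no component-reducing $4$-switch, even though crossing arcs are plentiful. Your extremal $H$ could a priori be such a subgraph, so the contradiction is not established. You would need longer alternating cycles, and then you must also show the switch does not disconnect a block. Even for $4$-switches, ``$d-1\ge1$ remaining arcs'' is not the right reason; what is true is that connected $d$-regular bipartite graphs with $d\ge 2$ are bridgeless. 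Your fallback does not say why the reserved arcs would create cherries or how regularity survives re-adding them, so it does not close the gap.

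The missing idea, which is how the paper proceeds, is to build the cherry property in \emph{before} regularising, and then use that adding arcs cannot destroy cherries. By Lemma~\ref{lemma:G-has-alpha-cherry}, $D$ has the $2\nu^3$-cherry property. By Janson's inequality (Lemma~\ref{lemma:cherry_prop}), $F=D_p$ with $p=\xi/2$ still has the cherry property while $\Delta(F)\le\xi k$. Lemma~\ref{lemma:regular} then extends any such sparse $F$ to a spanning regular $H$ with $F\subseteq H\subseteq D$. It does this via a Vizing decomposition into small matchings, each closed into a $1$-factor by short connecting paths and a Hamilton cycle of the rest.

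Your flow machinery could replace that last step. Ask in the double cover for a subgraph of $B-F$ with degrees $d-\deg^{\pm}_F$. The same min-cut computation gives the condition
\[
  e_B(S,T)\ \ge\ d\,(|S|+|T|-k)+e_F(\bar S,\bar T)\qquad\text{for all } S\subseteq V^+,\ T\subseteq V^-,
\]
where $\bar S=V^+\setminus S$ and $\bar T=V^-\setminus T$. Since $e_F(\bar S,\bar T)\le \xi k\min(|\bar S|,|\bar T|)$, this holds for $d=\lfloor \nu^2k\rfloor$ and $\xi\ll\nu$ by essentially your three-regime check.
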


  \begin{prop}
    \label{proposition:mixing}
    Let~$H$ be a regular digraph on $k$ vertices that
    satisfies the cherry property. Moreover, let~${P=(v_i)_{i\in [m]}}$ be an oriented
    path of length $m$. For every~${u\in V(H)}$, the $P$-random walk~${(X_i)_{i \in [m]}}$ starting at $u$ satisfies
    \[
      \max_{v\in V(H)} \left|\mathbb{P}_u(X_m=v) - \frac{1}{k} \right| \leq \exp\left(-\frac{m}{4k^5} \right).
    \]
  \end{prop}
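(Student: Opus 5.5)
Throughout, write the $P$-random walk in matrix form and show that each step contracts the distance to the uniform distribution in $\ell_2$. Let $H$ be $d$-regular, let $A$ be its adjacency matrix, and set $M^{+}=A/d$ and $M^{-}=A^{\top}/d$. Since $H$ is regular, both matrices are doubly stochastic. If $p_i$ denotes the distribution of $X_i$ (as a row vector), then $p_1=e_u$ and $p_{i+1}=p_iM^{\diamond_i}$. Let $\mathbf{u}=\frac1k\mathbf 1$ be the uniform distribution and $x_i=p_i-\mathbf{u}$. Because the matrices are doubly stochastic, $\mathbf u$ is fixed and $x_{i+1}=x_iM^{\diamond_i}$ with $\sum_y (x_{i})_y=0$ for all $i$. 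The goal is a one-step contraction
\[
\|xM^{\diamond}\|_2^2\le \Bigl(1-\frac{1}{k^5}\Bigr)\|x\|_2^2 \quad\text{for every } x\perp\mathbf 1 \text{ and } \diamond\in\{+,-\}.
\]
Iterating gives $\|x_m\|_2^2\le(1-k^{-5})^{m-1}\|x_1\|_2^2\le \exp(-(m-1)/k^5)$. Hence $\max_v|\mathbb P_u(X_m=v)-1/k|\le\|x_m\|_2\le\exp(-(m-1)/(2k^5))$, which is at most $\exp(-m/(4k^5))$ for $m\ge2$. The case $m=1$ is trivial, since $1-1/k\le 1-1/(4k^5)\le\exp(-1/(4k^5))$.

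To get the contraction for $\diamond=+$, note that $(xM^{+})_z=\frac1d\sum_{y\in N^-(z)}x_y$. Apply Proposition~\ref{prop:cauchy} over $N^-(z)$ with $a_y=x_y$ and $b_y=1$:
\[
\Bigl(\sum_{y\in N^-(z)}x_y\Bigr)^2=d\sum_{y\in N^-(z)}x_y^2-\frac12\sum_{y,y'\in N^-(z)}(x_y-x_{y'})^2 .
\]
Summing over $z$ and using that every $y$ has exactly $d$ out-neighbours, we obtain $\|xM^+\|_2^2=\|x\|_2^2-L$, where
\[
L=\frac{1}{2d^2}\sum_z\sum_{y,y'\in N^-(z)}(x_y-x_{y'})^2 .
\]
For $\diamond=-$ the same computation works with out-neighbourhoods in place of in-neighbourhoods.

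The main step is to lower-bound $L$ using the cherry property. Sort the entries of $x$. Since $\sum_y x_y=0$, we have $\|x\|_2^2\le k(\max x-\min x)^2$. Among the $k-1$ consecutive gaps between sorted values, some gap is at least $(\max x-\min x)/k$; cut there. This gives a partition $\{X,Y\}$ of $V(H)$ with $x_y-x_{y'}\ge(\max x-\min x)/k$ for all $y\in X$ and $y'\in Y$. The cherry property gives some $(y,y',z)\in\mathcal C^{+}(X,Y)$, i.e.\ $y,y'\in N^-(z)$ (respectively $\mathcal C^-(X,Y)$ for the $M^-$ step). The ordered pairs $(y,y')$ and $(y',y)$ then each contribute to $L$, so
\[
L\ge \frac{1}{d^2}\cdot\frac{(\max x-\min x)^2}{k^2}\ge\frac{\|x\|_2^2}{d^2k^3}\ge\frac{\|x\|_2^2}{k^5},
\]
using $d\le k$. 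This is exactly the required contraction.

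The only delicate point is matching the direction of the cherry to the direction of the step: a $+$-step needs a common out-neighbour, so it uses $\mathcal C^+$, and a $-$-step uses $\mathcal C^-$. The cherry property is assumed for both directions, so every step contracts regardless of the orientation pattern of $P$.
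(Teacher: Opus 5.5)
Your proof is correct and follows essentially the same route as the paper: it tracks the squared $\ell_2$-distance to the uniform distribution, applies the identity of Proposition~\ref{prop:cauchy} over each in- or out-neighbourhood, lower-bounds the loss via a largest-gap cut combined with the cherry property in the matching direction, and iterates. Your bookkeeping is slightly tighter than the paper's. Counting both ordered pairs $(y,y')$ and $(y',y)$ gives a per-step factor $1-k^{-5}$ instead of $1-\frac{1}{2k^5}$, and this is what lets the square root in $\max_v|(x_m)_v|\le\|x_m\|_2$ still yield the stated exponent, whereas the paper bounds the maximum deviation by $m(X_m)$ rather than by $\sqrt{m(X_m)}$.
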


  We will prove Proposition \ref{proposition:regular-cherry-subgraph} in Subsection \ref{ssec:subgraph} and Proposition \ref{proposition:mixing} in Subsection \ref{ssec:uniform}.

\subsection{Finding a regular subgraph with the cherry property}
\label{ssec:subgraph}

  We show the existence of the subgraph $H$ of $D$ from Proposition \ref{proposition:regular-cherry-subgraph} in three steps: first we show $D$ has the cherry property, then we show   a random subgraph of~$D$ has the cherry property with high probability, and finally, we carefully
  add some arcs  to make that subgraph regular. The obtained subgraph of $D$ has the cherry property since this property is monotone.

  \begin{lemma}
  \label{lemma:G-has-alpha-cherry}
    Let $0 < \nu \leq \tau \ll \gamma < 1$. Let~$D$
    be a robust $(\nu,\tau)$-expander
    with~${\semidegree(D) \geq \gamma k}$.
    Then $D$ has the~$2\nu^3$-cherry property.
  \end{lemma}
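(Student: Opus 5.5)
We must show that for every partition $\{X,Y\}$ of $V(D)$ and each $\diamond\in\{+,-\}$ we have $|\mathcal{C}^\diamond(X,Y)|\ge 2\nu^3|X||Y|k$. Write $\mathcal{C}^\diamond(X,Y)=\sum_{z\in V(D)} |N^{\bar\diamond}(z)\cap X|\,|N^{\bar\diamond}(z)\cap Y|$, where $\bar\diamond$ is the opposite sign. In this form the count is over pairs of arcs into (or out of) a common vertex $z$, one arc coming from $X$ and one from $Y$. We treat $\diamond=+$, so that we count common out-neighbours $z$. The case $\diamond=-$ is symmetric: $D$ is a robust expander, so it is both a robust in- and out-expander. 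By symmetry in $X$ and $Y$ we may assume $|X|\le |Y|$, so $|Y|\ge k/2$.

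\emph{Case 1: $|X|\ge \tau k$.} Since $|X|\le k/2\le(1-\tau)k$, robust out-expansion gives $|\RN^+_{\nu}(X)|\ge |X|+\nu k$. Similarly, if $|Y|<(1-\tau)k$ then $|\RN^+_\nu(Y)|\ge |Y|+\nu k$. Otherwise $|Y|\ge(1-\tau)k$, and every $z$ has at least $\gamma k-\tau k\ge\nu k$ in-neighbours in $Y$, so $\RN^+_\nu(Y)=V(D)$. In either case $|\RN^+_\nu(X)|+|\RN^+_\nu(Y)|\ge k+\nu k$, and hence $|\RN^+_\nu(X)\cap \RN^+_\nu(Y)|\ge \nu k$. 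Each $z$ in this intersection contributes at least $(\nu k)^2$ to the sum, giving $|\mathcal{C}^+|\ge \nu^3k^3\ge 2\nu^3 |X||Y|k$, because $|X||Y|\le k^2/4$.

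\emph{Case 2: $|X|<\tau k$.} Now $|Y|>(1-\tau)k$. Each $x\in X$ has at least $\gamma k$ out-neighbours $z$, and each such $z$ has at least $\gamma k-|X|\ge \gamma k/2$ in-neighbours in $Y$. Hence $|\mathcal{C}^+|\ge |X|\cdot\gamma k\cdot \gamma k/2\ge 2\nu^3|X||Y|k$, using $|Y|\le k$ and $\nu\ll\gamma$.

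For $\diamond=-$ we repeat the argument with robust in-neighbourhoods. If $D$ is only assumed to be a $(\nu,\tau)$-in-expander in the weaker sense, we invoke Lemma~\ref{lemma:in_expansion_paths}, which yields in-expansion with parameter $\nu^3$. The constant $2\nu^3$ presumably reflects that weaker parameter; even so, $\nu^{9}$-type losses could arise, so we should check carefully which expansion parameter is available. The main obstacle is precisely this bookkeeping of constants in Case 1, where the size-$\nu k$ overlap must be combined with the robust-degree threshold. The case split at $\tau k$ itself is routine.
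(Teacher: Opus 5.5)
Your proof is correct and is essentially the paper's argument. When the smaller side has more than $\tau k$ vertices you intersect the two robust $\diamond$-neighbourhoods and collect $(\nu k)^2$ cherries at each common vertex; otherwise you count directly via the minimum semidegree. (Splitting strictly, i.e.\ putting $|X|=\tau k$ into your Case~2, which works verbatim for $|X|\le\tau k$, removes your boundary subcase $|Y|\ge(1-\tau)k$, just as the paper's split at $|Y|>\tau k$ with $|X|\ge|Y|$ does.)

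Your closing worry is unfounded. A robust $(\nu,\tau)$-expander is by definition also a robust $(\nu,\tau)$-in-expander, so the case $\diamond=-$ is exactly symmetric. The cube in $2\nu^3$ comes from the count itself: at least $\nu k$ (in the paper, $2\nu k$) vertices of $\RN^{\diamond}_{\nu}(X)\cap\RN^{\diamond}_{\nu}(Y)$, each with $\nu k$ neighbours on each side. It does not come from any loss in the in-expansion parameter, so Lemma~\ref{lemma:in_expansion_paths} should not be invoked here; doing so would genuinely degrade the constant.
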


\begin{proof}
  Let $\{X, Y\}$ be a
  partition of $V(D)$, and let~${\diamond \in \{+,-\}}$. We can assume that~${|X|\ge |Y|}$.
  First assume that~$|Y| > \tau k$. Since $D$ is a robust $(\nu,\tau)$-expander, we have
  \[
    |\RN_\nu^{\diamond}(X)|+|\RN_\nu^{\diamond}(Y)| \geq |X| + |Y| + 2\nu k = (1+2\nu)k.
  \]
  \noindent and hence $|\RN_\nu^{\diamond}(X)\cap \RN_\nu^{\diamond}(Y)| \geq 2\nu k.$
  By considering each vertex in~${\RN_\nu^{\diamond}(X)\cap \RN_\nu^{\diamond}(Y)}$ and a pair of
  its in or out-neighbours (according to $\diamond$) in~$X$ and~$Y$, we conclude that
  $    |\mathcal{C}^{\diamond}(X,Y)|\geq 2\nu^{3}|X||Y|k$, which is as desired. 
  
  Now, assume that~${|Y| \leq \tau n \leq \gamma k/2}$. Let $\circ \in \{+,-\}\setminus\{ \diamond\}$, and choose an arbitrary~${y \in Y}$. For each $z \in N^{\circ}(y)$,
  we have
  \[
    |N^{\circ}(z,X)| \geq |N^{\circ}(z)| - |Y| \geq  \gamma k - |Y| \geq \frac{\gamma k}{2},
  \]
 and thus $|\mathcal{C}^{\circ}(X,Y)|\geq \frac{\gamma^2}{2}|Y| k^2 \geq 2\nu^3|X||Y|k$, as desired.
  \end{proof}

  Given a digraph $D$ and $0 \leq p \leq 1$, we denote by $D_p$ the random subgraph of $D$
  obtained by selecting each arc independently with probability $p$.
  We next show that if $D$ is a digraph satisfying the cherry property robustly,
  then $D_p$ also satisfies the cherry property for a wide range of values of $p$,
  via a straightforward application of Janson's inequality.
  \begin{lemma}
    \label{lemma:cherry_prop}
    For every $\alpha = \alpha(k)\in (0,1]$ and $p=p(k)\in (0,1)$
    satisfying~${\alpha^2 pk/\log k \rightarrow \infty}$ as $k \rightarrow \infty$,
    the following holds.
    If~$D$ is digraph on~$k$ vertices with the~$\alpha$-cherry property then with
    high probability~$D_p$ has the~$\alpha p^2/2$-cherry property.
  \end{lemma}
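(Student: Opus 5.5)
For a fixed partition $\{X,Y\}$ and $\diamond\in\{+,-\}$, I would apply Janson's inequality (Lemma~\ref{lemma:janson}) to the number of cherries of $D$ that survive in $D_p$. Then I would take a union bound over all partitions, grouped by the size of the smaller part. We may assume $|Y|\le |X|$, so that $|X|\ge k/2$. Also, $pk\to\infty$ follows from the hypothesis, since $\alpha\le 1$.

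\emph{Setting up Janson.} For each cherry $c=(x,y,z)\in\mathcal{C}^\diamond(X,Y)$ (cherries taken in $D$), let $A_c$ be the event that both arcs of $c$ lie in $D_p$. Say $\diamond=+$, so these arcs are $(x,z)$ and $(y,z)$; the case $\diamond=-$ is symmetric. These arcs are distinct since $x\neq y$, so $\P(A_c)=p^2$, and each $A_c$ is increasing. Let $Z=\sum_c \mathds{1}_{A_c}$ count the cherries of $D_p$ in $\mathcal{C}^\diamond(X,Y)$. Then
\[
\Exp[Z]=p^2|\mathcal{C}^\diamond(X,Y)|\ge \alpha p^2|X||Y|k .
\]
Two events $A_c$ and $A_{c'}$ are dependent only if $c$ and $c'$ share an arc. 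The arc $(x,z)$ with $x\in X$ can occur only as the ``$X$-arc'' of a cherry, so every $c'$ sharing an arc with $c=(x,y,z)$ has the form $(x,y',z)$ or $(x',y,z)$. Hence each $c$ has at most $|X|+|Y|=k$ dependent partners, and each such pair has probability $p^3$. Therefore
\[
\Delta\le p^3 k|\mathcal{C}^\diamond(X,Y)|\le p^3|X||Y|k^2 .
\]

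\emph{The tail bound.} Apply Lemma~\ref{lemma:janson} with $t=\Exp[Z]/2$. The exponent is
\[
\frac{\Exp[Z]^2}{8(\Exp[Z]+\Delta)} .
\]
This expression is increasing in $\Exp[Z]$. Substituting the lower bound on $\Exp[Z]$, the upper bound on $\Delta$, and $pk\ge 1$ gives an exponent of at least
\[
\frac{\alpha^2p^4|X|^2|Y|^2k^2}{16\,p^3|X||Y|k^2}=\frac{\alpha^2p|X||Y|}{16}\ge \frac{\alpha^2pk|Y|}{32}.
\]
Outside this failure event, $Z\ge \Exp[Z]/2\ge (\alpha p^2/2)|X||Y|k$, which is exactly the $\alpha p^2/2$-cherry bound for this partition.

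\emph{Union bound.} There are at most $\binom{k}{s}\le k^s$ partitions whose smaller part has size $s$, and two choices of $\diamond$. Also $|Y|=s\ge 1$, since $\{X,Y\}$ is a partition into nonempty parts. So the total failure probability is at most
\[
\sum_{s\ge1} 2k^s\exp\!\left(-\frac{\alpha^2pk s}{32}\right)=\sum_{s\ge 1}2\exp\!\left(-s\Big(\frac{\alpha^2pk}{32}-\log k\Big)\right).
\]
This tends to $0$ because $\alpha^2pk/\log k\to\infty$.

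The main point requiring care is the estimate on $\Delta$. It must be linear in $|X||Y|$ (not quadratic) and carry an extra factor of $p$, so that the exponent scales like $\alpha^2pk|Y|$. That scaling is what beats the $k^{|Y|}$ partitions whose small side is $Y$. The observation that makes this work is that an arc can be shared between cherries only on the same side ($X$ or $Y$). As a result, each cherry has at most $k$ dependent partners.
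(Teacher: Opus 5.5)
Your proposal is correct and follows essentially the same route as the paper: Janson's inequality with $t=\Exp[Z]/2$ for each fixed partition and $\diamond$, the bound $\Delta\le p^3|X||Y|k^2$ (shared arcs force cherries of the form $(x,y',z)$ or $(x',y,z)$), the resulting exponent $\alpha^2p|X||Y|/16$, and a union bound over the smaller side of the partition. Your per-cherry partner count and geometric-series summation are only cosmetic variants of the paper's counting of sequences $(x,y_1,y_2,z)$, $(x_1,x_2,y,z)$ and of its final bound $k\exp(-\alpha^2pk/64)$.
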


  \begin{proof}
    Throughout this proof, for a partition~$\{X, Y\}$ of $V(D)$ and $\diamond \in \{+,-\}$,
    we write~$\mathcal{C}^{\diamond}(X,Y)$ and~$\mathcal{C}_p^{\diamond}(X,Y)$ with the index signaling whether we are in~$D$
    or~$D_p$.
   First, fix a partition~$\{X, Y\}$ of $V(D)$ with $|X| \leq |Y|$, and
    let $\diamond \in \{+,-\}$. The variable $Z^{\diamond}_{X,Y} = |\mathcal{C}_p^{\diamond}(X,Y)|$ is then a sum of
    indicator functions for each cherry in~$\mathcal{C}^{\diamond}(X,Y)$.
    As~$D$ has the~$\alpha$-cherry property, we have
   $  \alpha |X||Y| k \leq |\mathcal{C}^{\diamond}(X,Y)| \leq |X||Y|k$,
    and consequently,
    \begin{equation} \label{eq:exp_Z}
      \alpha p^2|X||Y|k \leq \Exp[Z^{\diamond}_{X,Y}] \leq p^2|X||Y|k.
    \end{equation}

    We aim to apply Janson's inequality (Lemma~\ref{lemma:janson}) for estimate of the lower tail of $Z^\diamond_{X,Y}$.
    For this, define $\Delta$ as the expectation of pairs of distinct cherries    in~$\mathcal{C}_p^{\diamond}(X,Y)$ that have one arc in common. 
    The terms which contribute to~$\Delta$ come from
    sequences~$(x,y_1,y_2, z)$ with~$(x,y_i,z) \in \mathcal{C}_p^{\diamond}(X,Y)$
    for~$i \in [2]$, or $(x_1,x_2,y,z)$ with~${(x_i,y,z) \in \mathcal{C}_p^{\diamond}(X,Y)}$,
    for~$i \in [2]$.
    Therefore we have that
    \[
      \Delta \leq p^3|X|^2|Y|k + p^3|X||Y|^2k \leq p^3|X||Y|k^2,
    \]
    \noindent since $|X|+|Y|=k$. Note that the $\alpha$-cherry property can only hold for $\alpha \leq 1$ and therefore ${pk\geq\alpha^2 pk \gg \log k}$ for sufficiently large $k$, which entails that
    \[ \Exp[Z^{\diamond}_{X,Y}] \overset{\eqref{eq:exp_Z}}{\leq} p^2|X||Y|k \leq p^3|X||Y|k^2.
    \]
    So Janson's inequality with $t = \Exp[Z^{\diamond}_{X,Y}]/2$ implies that
    \[
      \mathbb{P}\left( Z^{\diamond}_{X,Y}\leq \frac{\Exp[Z^{\diamond}_{X,Y}]}{2}\right) \leq \exp \left(- \frac{(\alpha p^2|X||Y|k)^2}{8(\Exp[Z^{\diamond}_{X,Y}]+\Delta)} \right)\leq  \exp\left( - \frac{\alpha^2p|X||Y|}{16} \right).
    \]
    Taking the union bound over all~$X$ with at most~$k/2$ vertices, and $\diamond \in \{+,-\}$,
    we obtain
    \begin{align*}
      \mathbb{P}(D_p\text{ does not have the } \alpha p^2/2\text{-cherry property}) &\leq \sum_{\diamond \in \{+,-\}}\sum_{\substack{X \cup Y = V(D)\\|X|\leq\floor{k/2}}}\mathbb{P}\Big(Z^{\diamond}_{X,Y} \leq \frac{\Exp[Z^{\diamond}_{X,Y}]}{2}\Big)\\
      & \leq 2\sum_{i=1}^{\lfloor k/2 \rfloor}\binom{k}{i}\exp\left( - \frac{\alpha^2pi(k-i)}{16} \right)\\
      & \leq 2\sum_{i=1}^{\lfloor k/2 \rfloor} \exp\left(i\log k - \frac{\alpha^2pik}{32} \right)\\
      & \leq k\cdot \exp\left( - \frac{\alpha^2pk}{64} \right)
  \end{align*}
  \noindent which tends to $0$ since $\alpha^2 pk/ \log k \rightarrow \infty$ as $k \rightarrow \infty$.  
  \end{proof}

Now we prove that every subgraph $F$ of a robust $(\nu,\tau)$-out-expander digraph $D$
with minimum semidegree at least $\gamma k$ is contained in a spanning regular subgraph
$H\subset D$, as long as the maximum degree of~$F$ is not too large.
Our proof strategy follows an approach previously used by Mycroft and Naia in~\cite{tassio}, which consists of 
decomposing $F$ into matchings and iteratively complete the matchings into disjoint $1$-regular digraphs.

  \begin{lemma}
    \label{lemma:regular}
    Let $1/k \ll \xi \ll \nu \leq \tau \ll \gamma < 1$.
    Let  $D$ be a robust $(\nu, \tau)$-out-expander on~$k$ vertices
    with~${\semidegree(D) \geq \gamma k}$.
    For every ~$F \subseteq D$ with~$\Delta(F) \leq \xi k$, there
    exists a spanning regular digraph~${H \subseteq D}$ that contains $F$.
  \end{lemma}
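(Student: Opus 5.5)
We follow the strategy of Mycroft and Naia indicated above: decompose $F$ into few matchings and extend each matching, one at a time, to a $1$-factor of $D$. These $1$-factors are pairwise arc-disjoint; their union $H$ contains $F$ and is regular, with in- and out-degree equal to the number of matchings used.

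\textbf{Decomposition.} Consider the bipartite multigraph $B_F$ with parts $V^{+}$ and $V^{-}$, two copies of $V(D)$, where each arc $(u,v)\in A(F)$ becomes the edge $u^{+}v^{-}$. Each vertex $u^{+}$ has degree $\deg^{+}_F(u)$ and each $v^{-}$ has degree $\deg^{-}_F(v)$, so $\Delta(B_F)\le \Delta(F)\le \xi k$. By Vizing's Theorem (Theorem~\ref{thm:vizing}), or by K\H{o}nig's theorem since $B_F$ is bipartite, $A(F)$ splits into $s\le 2\xi k$ matchings $M_1,\dots,M_s$. Each $M_j$ is a set of arcs in which every vertex has in-degree and out-degree at most $1$, so $M_j$ is a disjoint union of directed paths and cycles.

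\textbf{Extension.} We process $j=1,\dots,s$ in turn. Let $D_j$ be $D$ with all arcs of $H_1,\dots,H_{j-1}$ removed, and with all arcs of $F$ not in $M_j$ removed. The second deletion keeps the final $H_j$'s arc-disjoint from later matchings, so each arc of $F$ ends up in exactly one $H_j$. We have deleted at most $2s+\Delta(F)\le 5\xi k$ arcs at each vertex. Since $\xi\ll\nu$, the digraph $D_j$ still satisfies $\semidegree(D_j)\ge \gamma k/2$ and is a robust $(\nu/2,\tau)$-out-expander: deleting $5\xi k$ arcs per vertex lowers each robust-neighbourhood count by at most $5\xi k|S|/(\nu k)$ relative to the threshold, which is negligible when checked carefully. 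We need a $1$-factor $H_j\subseteq D_j$ (a spanning collection of vertex-disjoint directed cycles) containing $M_j$. To get it, contract each directed path $P$ of $M_j$ to a single vertex $w_P$ whose in-arcs are those of the first vertex of $P$ and whose out-arcs are those of the last vertex. Cycles of $M_j$ are already complete, so we delete them. A $1$-factor of the contracted digraph $D_j'$ then lifts to the required $H_j$.

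\textbf{Main obstacle.} The key step is finding a $1$-factor in $D_j'$. This is equivalent to a perfect matching in the bipartite graph $(V^{+},V^{-})$ of $D_j'$. By Hall's theorem it suffices that $|N^{+}(S)|\ge |S|$ for every $S\subseteq V(D_j')$. For $|S|\le \gamma k/4$ this follows from the minimum semidegree. For $\tau k\le |S|\le(1-\tau)k$ it follows from robust expansion. For $|S|\ge (1-\tau)k$, any vertex outside $N^{+}(S)$ would have all of its at least $\gamma k/4$ in-neighbours outside $S$, which is impossible; so $N^{+}(S)$ is everything. The difficulty is that contraction can destroy these properties. A path of $M_j$ can be long, so a contracted vertex can stand for many vertices, and $D_j'$ has fewer vertices than $D_j$.

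Our first attempt would be to avoid long paths: since $F\subseteq D$ is arbitrary, we could split $F$ further into pieces whose matchings consist only of short paths. A simpler fix is to refine the decomposition so that each $M_j$ has at most $\xi' k$ arcs, using $O(\Delta(F)/\xi'\cdot\ldots)$ matchings; equivalently, split each $M_j$ into sub-matchings with few arcs, keeping the total number of matchings at $O(\sqrt{\xi}k)$. Then $D_j'$ is $D_j$ with at most $\xi' k$ vertices contracted away. Contraction changes each vertex's neighbourhood by at most $\xi' k$, so the semidegree and robust expansion bounds survive with slightly worse constants, and the Hall argument above goes through. Choosing $\xi\ll\xi'\ll\nu$ makes both the count of removed arcs per vertex and the contraction losses negligible. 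Finally, $H=\bigcup_j H_j$ is regular, spanning, and contains $F$.
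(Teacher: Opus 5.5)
Your proof is correct, and it takes a genuinely different route from the paper's.

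\textbf{The paper's route.} The paper colours the underlying multigraph of $F$, so each matching $M_i$ consists of vertex-disjoint arcs $a_1b_1,\dots,a_mb_m$. It cuts each $M_i$ down to at most $\nu k/(8\ell)$ arcs, where $\ell=\lceil 1/\nu\rceil$. It then links these arcs into a single directed cycle $C_i$ using vertex-disjoint paths of length $\ell$ from $b_j$ to $a_{j+1}$ (Lemma~\ref{lemma:in_expansion_paths}). Finally it covers $D_i-V(C_i)$ by one more cycle via Theorem~\ref{thm:hamilton}, so $H_i$ is the union of two cycles.

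\textbf{Your route.} You colour the bipartite double cover, so your $M_j$ are disjoint unions of directed paths and cycles. You contract the paths and obtain the $1$-factor directly from Hall's theorem. You check Hall's condition in the three ranges of $|S|$ using the semidegree and robust out-expansion.

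\textbf{Comparison.} Both arguments must keep each matching small. The paper needs this so the connecting paths occupy few vertices; you need it so contraction costs each vertex at most $2\xi'k$ neighbours. Both absorb the resulting larger number of matchings because $\xi\ll\nu$. Your route is more elementary and self-contained: it needs neither the K\"uhn--Osthus--Treglown Hamiltonicity theorem nor the path-connection lemma. The paper's route simply reuses tools it has imported anyway. Your bookkeeping is also more careful in one respect: by deleting $A(F)\setminus M_j$ from $D_j$, you ensure $H_j$ cannot use arcs of later matchings, a point the paper's proof passes over.

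\textbf{Two small remarks.} First, robust expansion of $D_j$ follows more simply than you indicate. After your refinement each vertex loses at most $s+\Delta(F)=O(\sqrt{\xi}k)$ in-arcs (not $5\xi k$), so $\RN^{+}_{\nu,D}(S)\subseteq\RN^{+}_{\nu/2,D_j}(S)$ directly. Second, an arc from the last to the first vertex of a path $P$ becomes a loop at $w_P$. It should be kept, as the edge $w_P^{+}w_P^{-}$ in your bipartite graph, since it corresponds to closing $P$ into a cycle.
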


  \begin{proof}
    Let~${\ell = \ceil{1/\nu}}$.
    Vizing's Theorem (Theorem~\ref{thm:vizing}), applied to the underlying multigraph
    of $F$ (which has multiplicity at most 2), yields a partition of~$A(F)$
    into~$d'$ disjoint matchings with~${d' \leq \Delta(F) + 2 \leq 2\Delta(F)}$.
    We further partition each of these matchings into at most~${\lceil 4\ell/\nu\rceil}$
    matchings to obtain a family of matchings~${M_1,\dots, M_d}$ satisfying
    \begin{enumerate}[(i)]
    		\item $|M_i|\leq \frac{\nu k}{8\ell}$, and
    		\item $d\leq 2\Delta(F)\ceil{\frac{4\ell}{\nu}} \leq \frac{10 \ell \xi k}{\nu} \leq\frac{\nu k}{8}$.
    \end{enumerate}

    We will construct a family~${H_1,\dots,H_d}$ of arc-disjoint $1$-regular 
    spanning subgraphs of~$D$ with~$M_i\subseteq H_i$, for each $i \in [d]$. The union of the $H_i$ will be the subgraph $H$ of the lemma.
    Suppose we already defined $H_1,\dots, H_{i-1}$ for some~${i \in [d]}$,
    and set $D_i = D \setminus \bigcup_{j < i}A(H_j)$.
    Let $a_1b_1,\dots,a_m b_m$ be an enumeration of $M_i$ such
    that~$a_j b_j \in A(D)$, for all $j \in [m]$.
    
    We claim $D_i$ contains a set of  pairwise vertex-disjoint directed paths~${P_1,\dots,P_m}$
    of length~$\ell$ such that~$P_j$ starts in~$b_j$ and ends
    in~$a_{j+1}$. 
    Indeed, we can construct these paths inductively by repeatedly applications of Lemma~\ref{lemma:in_expansion_paths}.
    To see this, note that for each $j \in [m]$, the digraph $D_i -\bigcup_{j' < j}V(P_{j'})$ has
    minimum semidegree at
    least~$\gamma k  - d - \ell m \geq \gamma k/4$, and is a robust $(\nu/2,2\tau)$-expander. This holds
    because we remove at most $d+\ell m \leq \nu k/8$ vertices from $D_i$ to obtain $D_i - \bigcup_{j' < j}V(P_{j'})$,
    and such deletion reduces the robust
    neighbourhood of any set by at most $\nu k/8$ vertices (see Proposition 35 in~\cite{taylor}).
    Then~${C_i = M_i \cup \bigcup_{j \in [\ell]}P_j}$ is  a directed cycle in $D_i$.
    Moreover, since~${k - \ell m \geq (1-\nu/8)k}$, Theorem~\ref{thm:hamilton} yields a
    directed cycle in $D_i$ that is Hamiltonian in~$D_i- C_i$. So ${H_i = C_i \cup C'_i}$ is as desired for the lemma.
  \end{proof}
We are now ready to prove the main result of this subsection.

\begin{proof}[Proof of Proposition~\ref{proposition:regular-cherry-subgraph}]
  Let $\xi > 0$ such that
  $\frac{1}{k} \ll \xi \ll \nu \leq \tau \ll \gamma < 1.$
  Consider $D_p$ with $p=\xi/2$. Note that $\nu^6 p k / \log k \rightarrow \infty$ as $k \rightarrow \infty$.
  Thus, by a straightforward application of Chernoff's bound (Lemma~\ref{lemma:chernoff}),
  it follows that w.h.p.~${\Delta(D_p)\leq 2pk = \xi k}$.
  Moreover, the combination of
  Lemmas \ref{lemma:G-has-alpha-cherry} and \ref{lemma:cherry_prop} implies that
  with high probability $D_p$ has the cherry property. Therefore there exists an outcome~$F$ of
  $D_p$ that satisfies both properties. Finally, Lemma~\ref{lemma:regular}
  applied to~$F$ yields the desired regular subgraph $H\subset D$ that
  contains~$F$. 
\end{proof}

\subsection{From the random directed walk to the uniform distribution}
\label{ssec:uniform}
In this subsection we prove Proposition \ref{proposition:mixing}, which measures
how close a random directed walk gets to the uniform distribution. For the sake of explanation, consider an oriented path
$P=(p_i)_{i\in [m]}$, a vertex $u$, and the $P$-random walk starting at $u$. Moreover let $f(v)=\mathbb{P}_u(X_i)$. In the next result we show that, in
a digraph with the cherry property, the maximum difference of $f$ among a pair of
vertices is to some extent achieved by a pair of vertices with a common
out(in)-neighbor. So if $X_i$ gets to that common neighbor, then the probabilities of those said vertices being reached at step $i+1$ will get closer to each other.

An interesting aspect of regular digraphs is that if you choose a
vertex $u$ uniformly at random and a vertex $v$ uniformly at random in $N^+(u)$,
then the distribution of $v$ itself is uniform. This simple fact greatly simplifies the analysis of the proof of Proposition \ref{proposition:mixing} because it gives us a clean way of describing how close $X_i$ is to the uniform distribution in terms of how close $X_{i-1}$ is. We couple that intuition with an analysis of the $L_2$-distance of~$X_i$ to the uniform distribution and a convenient Cauchy-Schwarz type  argument, where the cherry property appears.
We remark that a similar proof strategy was used by Mycroft and Naia~\cite{tassio}.

\begin{proof}[Proof of Proposition \ref{proposition:mixing}]
  Let $d \in \NN$ such that $H$ is $d$-regular.
  Let $P$ be given as in the proposition, and 
  let $\diamond_{i}, \circ_i \in \{+,-\}$ be such that $p_{i+1} \in N^{\diamond_i}(p_i)$ and $p_{i-1} \in N^{\circ_i}(p_i)$ for all suitable $i$.
  Let $(X_i)_{i \in [m]}$ be the given $P$-random walk starting at $u$.
  We wish to bound the distance from the distribution of $X_i$ to the
  uniform distribution, and will employ the squared euclidean
  distance to this aim. So for each $i \in [m]$, set
  \[
    m(X_i) = \sum_{w \in V(H)}\left(\mathbb{P}(X_i = w) - \frac{1}{k}\right)^2.
  \]
  
  Let $2 \leq i \leq m$. 
  For each~${v \in V(H)}$ let $f(v) = \mathbb{P}(X_{i-1} = v) - \frac{1}{k}$.
  As $H$ is $d$-regular, we have
  \begin{align*}
  \label{eq:function-m}
      m(X_i) &=\sum_{w\in V(H)}\left(\left(\sum_{v\in N^{\circ_i}(w)}\frac{\mathbb{P}(X_{i-1}=v)}{d}\right)-\frac{1}{k}\right)^2 \\
             &=\sum_{w\in V(H)}\left(\sum_{v\in N^{\circ_i}(w)}\frac{1}{d}f(v)\right)^2\\
             &\hspace*{-0.4cm}\stackrel{\text{Prop \ref{prop:cauchy}}}{=}
             \sum_{w\in V(H)}\left(\sum_{v\in N^{\circ_i}(w)}\frac{1}{d^2}\sum_{v \in N^{\circ_i}(w)}f(v)^2 - \frac{1}{2d^2}\sum_{u,v \in N^{\circ_i}(w)}(f(u) - f(v))^2\right)\\
             &=\frac{1}{d}\sum_{\substack{w\in V(H)\\v \in N^{\circ_i}(w)}}f(v)^2 - \frac{1}{2d^2}\sum_{\substack{w \in V(H)\\u,v \in N^{\circ_i}(w)}}(f(u) - f(v))^2.
  \end{align*}
  Observe that in the first term of the last line of the above equation, we are summing $f(v)^2$
  exactly $d$ times for each $v \in V(H)$, and divide the result by $d$, so this term equals $\sum_{v\in V(H)}f(v)^2=m(X_{i-1})$. Thus
  \begin{equation}
    \label{eq:mi}
    m(X_i) = m(X_{i-1}) - \frac{1}{2d^2}\sum_{\substack{w \in V(H)\\u,v \in N^{\circ_i}(w)}}(f(u)-f(v))^2.
  \end{equation}
  
  Now, let $M=\max_{u,v\in V(D)}|f(u)-f(v)|$. We will use the fact
  that $H$ has the cherry property to prove that the sum on the right hand side
  of~\eqref{eq:mi} is at least~${\left(\frac{M}{k}\right)^2}$.
  So, let $(v_i)_{i\in [k]}$ be a non-decreasing ordering of $V(H)$ 
  according to the values of $f$. Note
  that~${M = f(v_k)-f(v_1) = \sum_{i\in [k-1]} (f(v_{i+1})-f(v_i))}$.
  Hence, there exists $j\in [k-1]$ such that $f(v_{j+1})-f(v_j)\geq M/(k-1)$
  and consequently $f(v_\ell)-f(v_i)\geq M/(k-1)$ for every~${i,\ell \in [k-1]}$
  with $i\leq j < \ell$. Now let $X=\{v_1,..., v_j\}$ and $Y=V(H)\setminus X$.
  Since $H$ has the cherry property, there exist $u\in X$, $v\in Y$ and
  $w \in V(H)$ such that $u,v\in N^{\circ_i}(w)$. So, 
  \begin{equation}
    \label{eq:dif_fs}
    \sum_{\substack{w \in V(H)\\u,v \in N^{\circ_i}(w)}}(f(u) - f(v))^2 \geq \left(\frac{M}{k-1}\right)^2 \geq \left(\frac{M}{k}\right)^2.
    \end{equation}
 We claim that $M \geq \max_{v\in V(H)} |f(v)|$. Indeed, if $f$ is
  identical to~$0$ this holds trivially. Otherwise,
  let~${u \in V(H)}$ such that $|f(u)| = \max_{v \in V(H)}|f(v)|$.
  Given that~${\sum_{v \in V(H)}f(v) = 0}$, there exists~${u' \in V(H)}$
  with~$f(u')$ having opposite sign of~$f(u)$. 
  Hence,~${M \geq |f(u) - f(u')| \geq |f(u)| = \max_{v \in V(H)}|f(v)|}$.
  Furthermore, by an averaging argument, there
  exists $u \in V(H)$ such that $f(u)^2 \geq m(X_{i-1})/k$.
  Thus, $M \geq \max_{v \in V(H)} |f(v)| \geq \sqrt{m(X_{i-1})/k}$.
  Combining all of this with~\eqref{eq:mi} and~\eqref{eq:dif_fs}, we get
  \[
    m(X_i) \leq m(X_{i-1}) - \frac{M^2}{2d^2k^2} \leq m(X_{i-1}) - \frac{m(X_{i-1})}{2d^2k^3} \leq m(X_{i-1})\left(1-\frac{1}{2k^5}\right)\]
  \noindent and consequently
  \[
    m(X_i) \leq m(X_1)\left(1-\frac{1}{2k^5}\right)^{m-1}\leq m(X_1)\exp\left(-\frac{m}{4k^ 5}\right),
  \]
  where we used that $1-x \leq e^{-x}$, for all $x \in \mathbb{R}$,
  in the last inequality.
  Notice that $X_1$ is deterministically equal to $u$. Therefore,
  \[
    m(X_1) = \left(1-\frac{1}{k}\right)^2 + \frac{k-1}{k^2}<1.
  \]
  \noindent We finish the proof by deducing that
  \[\max_{v\in V(H)} \left|\mathbb{P}(X_m=v) - \frac{1}{k} \right| \leq m(X_m) \leq \exp\left(-\frac{m}{4k^ 5}\right).\]
\end{proof}

\section{Semi-random assignment}
\label{sec:semi-random}
Let $T$, $H$ be oriented graphs. An \defi{assignment} of~$T$ to~$H$
is a mapping~${\phi:V(T)\rightarrow V(H)}$ that preserves arcs,
i.e.~${xy \in A(T)}$ implies ~${\phi(x)\phi(y) \in A(H)}$. An assignment
is also called a homomorphism in other contexts.
Given a vertex~${v \in V(H)}$, a set~${S\subseteq V(T)}$ and an assignment~$\phi$
of~$T$ to~$H$, we are interested in the quantity $|\phi^{-1}(v)\cap S|$ when $T$ is a tree  and $H$ is a regular  digraph with
the cherry property. In the proof of our main result, $H$ is the reduced digraph and we use
the assignment to guide the actual embedding. 

A natural first approach would be to perform a randomised greedy embedding and use the mixing property of the random walks to show that each cluster of the reduced digraph is assigned roughly the same amount of vertices. This would indeed be sufficient for embedding almost spanning trees. However, since we are working with spanning trees,  we need to assign an exact amount of vertices to each cluster, and we also need to deal with the exceptional vertices of the regular partition. For this reason we will work with a semi-random assignment. This will have enough randomness to obtain a roughly balanced assignment and enough structure to correct the imbalance. 

We call  any ordering of the vertices of a rooted tree where each vertex is preceded by its parent a \defi{top-down ordering}. 

\begin{defn}[Semi-random assignment] \label{def:semi_random} 
 Let~$H$ be a digraph with a Hamilton cycle~$C$.
 Let $(T,r)$ be a rooted $n$-vertex tree with a top-down ordering $(u_1,u_2,\dots, u_n)$ of $V(T)$ starting at $u_1=r$. 
 Let $\cP$ be a collection of vertex-disjoint subtrees of $T$ that do not contain $r$.
 A \defi{$(H,C,\cP)$-semi-random assignment} of $T$ is an assignment~${\phi:V(T)\rightarrow V(H)}$ defined as follows.

	\begin{enumerate}[(1)]
		\item Choose $\phi(u_1)$ uniformly at random in $V(H)$.
		\item Suppose that $\phi(u_i)$ is defined for every $1\leq i< j\leq n$. Let $w$ be the parent of $u_j$ in $T$ and let $\diamond \in \{+,-\}$ be such that $u_j\in N_T^\diamond(w)$.
			\begin{enumerate}[(i)]
				 \item If $wu_j$ is in some $P\in \cP$, let $\phi(u_j)$ be the single vertex in $N^\diamond_C(\phi(w))$,
                 and
				 \item otherwise, choose $\phi(u_j)$ uniformly at random from $N_H^\diamond(\phi(w))$.
			\end{enumerate}
	\end{enumerate}
\end{defn}

In the next proposition we exhibit some properties of the semi-random assignment if~$H$ is regular and
has the cherry-property. Given a rooted oriented tree~$(T,r)$, and a collection~$\cP$ of vertex-disjoint
subtrees of $T$, we let \defi{$\Root(\cP)$} consist of one vertex from each~$P\in\cP$, namely the one that is 
closest to $r$. Moreover, we set $V(\cP)=\bigcup_{P\in\cP }V(P)$.

\begin{prop}
  \label{prop:sr_assignment}
 For every $\Delta, k,M \in \NN$,
 there are $\varepsilon>0$ and $n_0$ 
 such that the following holds
 for all~$n \geq n_0$. Let $H$ be a regular $k$-vertex 
 digraph on $k$ vertices with the cherry property and  a Hamiltonian cycle $C$. Let~$(T,r)$ be an $n$-vertex oriented rooted tree with $\Delta(T) \leq \Delta$ and let $\cP$ be a set of vertex-disjoint subtrees of~$T$, each on at most $M$ vertices. Then there is a subset $\cP'\subset \cP$ with $|\cP'|\geq |\cP| - n^{1/2}$ such that the following holds for any fixed  $S\subset (V(T)\setminus V(\cP'))\cup \Root(\cP')$ and $v\in V(H)$.
 Any $(H,C,\cP')$-semi-random assignment $\phi$ of $T$ satisfies
 \begin{enumerate}[(a)]
 	\item $|\phi^{-1}(v)\cap S| = \frac{|S|}{k} \pm n^{1-\varepsilon},$ and
  \item $|\phi^{-1}(v)\cap V(\cP')| = \frac{|V(\cP')|}{k}\pm n^{1-\varepsilon}, $ 
  \end{enumerate}
 with probability at least $1-\exp(-n^{\varepsilon})$.
\end{prop}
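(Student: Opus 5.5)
The plan is to show two things. First, the image of every vertex far from the root is nearly uniform once we condition on the image of its ancestor $L$ levels up. Second, the counts in (a) and (b) concentrate, by a martingale along the top-down exposure order. Throughout, $\sigma^{+}$ and $\sigma^{-}$ denote the successor and predecessor maps along $C$; both are permutations of $V(H)$.

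\textbf{Choice of $L$ and of $\cP'$.} I fix $\delta>0$ small in terms of $\Delta,k,M$, and set $L=\lceil 4\delta k^5(M+1)\log n\rceil$. With $\delta$ small enough, $\Delta^{L}\le n^{1/4}$. I let $\cP'$ consist of the subtrees in $\cP$ that contain no vertex at distance less than $2L$ from $r$. The ball of radius $2L$ around $r$ has at most $\Delta^{2L}+1\le n^{1/2}$ vertices, and the subtrees are vertex-disjoint, so $|\cP'|\ge |\cP|-n^{1/2}$. The vertices at depth less than $L$ number at most $n^{1/4}$, and I absorb them into the error term.

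\textbf{Mixing step.} Take $u\in S$ at depth at least $L$. Let $a(u)$ be its ancestor at distance $L$, and $Q$ the path from $a(u)$ to $u$. Since $u\in (V(T)\setminus V(\cP'))\cup\Root(\cP')$, the path $Q$ cannot end strictly inside a member of $\cP'$. Each visit of $Q$ to a member of $\cP'$ is a run of at most $M-1$ deterministic edges, after which $Q$ leaves through an edge that is not in any member of $\cP'$ and is therefore random. Consecutive deterministic runs are thus separated by random steps, so $Q$ contains at least $L/(M+1)$ random steps.

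A deterministic step applies $\sigma^{\pm}$, a permutation of $V(H)$. It maps the uniform distribution to itself and preserves $m(\cdot)=\sum_w(\P(X=w)-1/k)^2$. Hence the computation in the proof of Proposition~\ref{proposition:mixing} goes through unchanged: each random step multiplies $m$ by at most $(1-1/(2k^5))$, and each deterministic step leaves $m$ unchanged. This gives
\[
\Bigl|\P\bigl(\phi(u)=v \bigm| \phi(a(u))\bigr)-\tfrac1k\Bigr|\le \exp\Bigl(-\tfrac{L}{4k^5(M+1)}\Bigr)\le n^{-\delta}.
\]
The same holds for every $x\in V(\cP')$. Indeed, $\phi(x)=\pi_x(\phi(\rho_x))$, where $\rho_x$ is the root of $x$'s subtree, $\pi_x$ is a fixed composition of $\sigma^{\pm}$, and $\rho_x$ has depth at least $2L$. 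The path from the ancestor of $\rho_x$ at distance $L$ down to $\rho_x$ is handled exactly as above, and $\pi_x$ preserves near-uniformity.

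\textbf{Concentration.} I expose the choices $\phi(u_1),\phi(u_2),\dots$ in the top-down order, with filtration $\mathcal F_j$; a deterministic choice contributes a zero increment. For each target vertex $u$ (in $S$ for (a), in $V(\cP')$ for (b)), telescope along the ancestors $a(u)=w_0,w_1,\dots,w_L=u$:
\[
\mathbf 1[\phi(u)=v]-\P\bigl(\phi(u)=v\mid \mathcal F_{a(u)}\bigr)=\sum_{i<L}\Bigl(\P(\phi(u)=v\mid\mathcal F_{w_{i+1}})-\P(\phi(u)=v\mid\mathcal F_{w_i})\Bigr).
\]
This uses that, given $\mathcal F_{w_i}$, the image $\phi(u)$ depends only on $\phi(w_i)$ and the later choices on $Q$. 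I then group all these increments by the exposed vertex $w=w_{i+1}$. Exposing $w$ can affect only targets within distance $L$ below $w$, of which there are at most $\Delta^{L}\le n^{1/4}$. So summing the grouped increments gives a martingale whose differences are bounded by $c=n^{1/4}$, over at most $n$ steps. By Lemma~\ref{lemma:Azuma}, the deviation exceeds $n^{1-2\eps}$ with probability at most $2\exp(-n^{2-4\eps}/(2n\cdot n^{1/2}))\le \exp(-n^{\eps})$, provided $\eps$ is small.

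Adding back $\sum_u \P(\phi(u)=v\mid\mathcal F_{a(u)})=|S|/k\pm n^{1-\delta}$ and the $n^{1/4}$ shallow vertices gives (a) with $\eps<\delta/2$. Statement (b) follows identically, using the permuted-root description of the vertices of $V(\cP')$.

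\textbf{Main obstacle.} The delicate point is the bookkeeping in the concentration step. I have to check that the grouped differences really form a martingale for the top-down exposure, and that each difference is bounded by the number of affected targets. In particular, the mixing estimate must hold with all interleaved deterministic $C$-steps on $Q$. This is exactly where the permutation invariance of $m(\cdot)$, the choice of $\cP'$ away from the root, and the restriction of $S$ to roots of $\cP'$ are used.
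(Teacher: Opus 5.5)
Your proof is correct, and it is organised differently from the paper's.

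\textbf{The paper's route.} The paper first applies Lemma~\ref{lemma:cutting} to cut $T$ into $s\le n^{5\delta}$ pieces of size about $n^{1-5\delta}$. It then runs Azuma over the pieces, with Lipschitz constant about $n^{1-4\delta}$. Mixing is used only to show that revealing one piece barely changes the expected counts in later pieces. Because Proposition~\ref{proposition:mixing} is invoked verbatim for pure $P$-random walks, the paper must delete from $\cP$ every subtree within distance $m\approx\log n$ of \emph{every} piece root $r_i$. Part (b) is then obtained by applying (a) to $\Root(\cP')$ separately for each isomorphism class of rooted subtrees.

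\textbf{Your route.} You expose single vertices and keep only the last $L$ terms of each target's Doob telescoping. The head term $\P(\phi(u)=v\mid\mathcal F_{a(u)})$ is deterministically $1/k\pm n^{-\delta}$. Your key extra observation is that $C$-steps are permutations and therefore preserve $m(\cdot)$. Hence the contraction in Proposition~\ref{proposition:mixing} survives interleaved deterministic steps, provided $Q$ has $\Omega(L/M)$ random steps. That is exactly what the hypothesis $S\subset (V(T)\setminus V(\cP'))\cup\Root(\cP')$ guarantees, since $Q$ cannot end strictly inside a member of $\cP'$. Your martingale check is sound: given $\mathcal F_{j-1}$ with $u_j=w_{i+1}$, the image $\phi(u)$ depends only on $\phi(w_i)$ and fresh randomness at $w_{i+1},\dots,u$. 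So the grouped increments $D_j$ are martingale differences with $|D_j|\le\Delta^L$, or $M\Delta^L$ for the targets in (b).

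\textbf{What this buys.}
\begin{itemize}
\item No tree-cutting lemma is needed.
\item $\cP'$ only has to avoid a ball around $r$. Even that could be dropped, since shallow subtrees contribute only $O(Mn^{1/4})$ vertices.
\item The Lipschitz constant is $n^{1/4}$, giving failure probability of order $\exp(-n^{1/2-4\varepsilon})$ rather than $\exp(-n^{\Theta(\delta)})$.
\item The martingale is taken with respect to the natural filtration. The paper filters on the counts $X_i$ and replaces $\P(\phi(p_h)=u\mid\mathcal E_j)$ by $\P(\phi(p_h)=u)$. That is harmless only because its mixing bound is uniform in $u$.
\end{itemize}

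\textbf{What it costs.} You need this small extension of Proposition~\ref{proposition:mixing} to mixed deterministic/random paths, whereas the paper uses that proposition as a black box. There is also a trivial off-by-one in $\Delta^{2L}+1\le n^{1/2}$, which is fixed by taking $\Delta^{L}\le n^{1/4}/2$.
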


We will prove Proposition \ref{prop:sr_assignment} at the end of this subsection. 
In order to shed some light on its proof,  consider the case where $\cP=\emptyset$ and $S=V(T)$.
The random assignment of Proposition \ref{prop:sr_assignment} is then a randomised
breadth first search algorithm with repetitions allowed and the proof follows by
a martingale analysis. We know by Proposition~\ref{proposition:mixing} that a
random oriented walk mixes in a regular digraph with the cherry property.
Therefore, we expect the vertices of $V(T)$ to be well distributed as long
as~$T$ is much larger than the host graph~$H$. However, we have to split the
tree into smaller subtrees and embed them one at a time to guarantee that that
expectation actually happens with high probability. 

We define random variables that count how many vertices of each subtree are
assigned to a particular vertex of~$H$, and consider the  Martingale arising naturally from this.
Each subtree can change the balance of the assignment by at most its size, which is a
just strong enough Lipschitz condition for Azuma's
inequality (Lemma~\ref{lemma:Azuma}), provided that the subtrees have
sublinear size. The argument remains the same if we count how many vertices of a
fixed set~${S\subset V(T)}$ is assigned to a vertex of $H$. The case when $\cP\neq\emptyset$  is similar as we can adjust the argument to calculate the expectations in the deterministic steps. With this motivation, let us move to admissible tree partitions.

Given a rooted tree $(T,r)$, a  family of disjoint rooted
subtrees $\big\{(T_i,r_i)\big\}_{i=0}^s$ is said to be \defi{admissible} 
if 
\begin{enumerate}[a)]
    \item $r = r_0$,
    \item $V(T) = \bigcup_{i=0}^{s}V(T_i)$, and
    \item if an edge $vw$ meets both $V(T_i)$ and $V(T_j)$ for 
          distinct $i,j\in \{0\}\cup [s]$, then  $\{v,w\}\cap\{r_i,r_j\}\neq\emptyset$. Moreover, $i<j$ if and only if $r_i$ is closer to $r$ than $r_j$ is.
\end{enumerate}

\begin{lemma}
  \label{lemma:cutting}
  For all $K,\Delta\in\mathbb N$ and for every rooted tree $(T,r)$
  with~${\Delta(T) \leq \Delta}$, there exists an admissible
  family $\big\{(T_i,r_i)\big\}_{i=0}^s$
  of disjoint rooted subtrees of $T$ such that $v(T_0)\leq K$
  and~$K \leq v(T_i) \leq \Delta K$, for every $i \in [s]$.
\end{lemma}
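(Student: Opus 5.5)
The plan is a bottom-up greedy cutting procedure. Admissibility then holds automatically, and most of the work is bookkeeping of sizes. I expect the root piece to be the main obstacle, and as explained at the end, I believe it cannot be overcome for every tree.

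\textbf{Setup and greedy step.} Fix a top-down ordering of $(T,r)$ and process the non-root vertices in reverse order, so that every vertex is handled after all of its children. During the process we maintain, for each processed vertex $v$, a \emph{residual part} $R(v)$. This is a connected subtree containing $v$, consisting of $v$ together with the residual parts $R(c)$ of those children $c$ of $v$ that were not used as roots of pieces. The invariant is $|R(v)|\le K-1$ whenever $v$ is not a piece root. Since a non-root vertex has at most $\Delta-1$ children, when we reach $v$ we have
\[
|\{v\}\cup\textstyle\bigcup_c R(c)| \le 1+(\Delta-1)(K-1).
\]
If this set has at least $K$ vertices, we declare it a piece with root $v$ and set $R(v)=\emptyset$ for the parent. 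Otherwise we keep it as $R(v)$. Every piece produced this way is connected, has its top vertex as its root, and satisfies
\[
K\le |T_i|\le 1+(\Delta-1)(K-1)\le \Delta K-K+1 .
\]

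\textbf{Admissibility and ordering.} Every edge between two different pieces joins the root $r_j$ of the lower piece to its parent. Hence condition~c) holds once the pieces are indexed by non-decreasing distance of $r_j$ from $r$, with ties broken arbitrarily. The remaining set $R(r)=\{r\}\cup\bigcup_c R(c)$ is connected, contains $r$, and becomes $T_0$ with $r_0=r$. Conditions a) and b) are immediate.

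\textbf{The root step, and the obstacle.} The only remaining requirement is $v(T_0)=|R(r)|\le K$. If this fails, I would try to shrink $R(r)$ child by child. Consider a child $c$ of $r$ whose full subtree contains a piece $T_j$ with $r_j$ adjacent to $R(c)$. Merge $R(c)$ into $T_j$, rerooting the merged piece at $c$. The new piece is connected, has size at most $(\Delta K-K+1)+(K-1)\le\Delta K$, and every other piece hanging off $R(c)$ still meets it only through its own root, so admissibility is preserved. The step that defeats this plan is a child $c$ whose entire subtree has fewer than $K$ vertices. Such a subtree can neither form a piece by itself nor attach to any piece other than $T_0$. This is a genuine obstruction, not a weakness of the argument. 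Take $r$ with $\Delta$ children, each heading a subtree of exactly $K-1$ vertices, so $n=1+\Delta(K-1)>K$ when $\Delta,K\ge2$. Any admissible family puts $r$ in $T_0$, and every other piece lies inside a single child subtree, hence has fewer than $K$ vertices. So either $s=0$ and $v(T_0)=n>K$, or some piece violates $v(T_i)\ge K$.

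\textbf{Conclusion.} I can therefore carry the proof through only under the additional condition that every child of $r$ whose subtree has fewer than $K$ vertices can be absorbed into $T_0$ within the budget $K$. Failing that, I would conclude that the statement as worded is false for such trees and cannot be proved.
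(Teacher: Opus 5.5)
Your counterexample is correct, and so is your conclusion. Whenever $\Delta,K\ge 2$, your tree admits no family as in the statement, so the lemma as worded cannot be proved. The failure is not limited to small $n$. For $\Delta\ge 3$, let $r$ have three children. Two of them head subtrees that each have fewer than $K$ vertices but together have at least $K$, and the third heads an arbitrarily large subtree. Both small subtrees are forced into $T_0$, so $v(T_0)>K$. This covers the regime $K=n^{1-5\delta}$ in which the lemma is applied.

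The paper's proof is essentially your greedy procedure. It repeatedly cuts off $S_{i-1}(r_i)$, where $|S_{i-1}(r_i)|\ge K$ and $|S_{i-1}(v)|\le K$ for every child $v$ of $r_i$. It then reverses the order and asserts that the final remainder, rooted at $r$, has fewer than $K$ vertices. This overlooks the case in which $r$ itself is selected as some $r_i$. In your example $r$ is the only eligible vertex at the first step. So $T_1=T$, and the remainder $T_0$ is empty and does not contain $r_0=r$.

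What is true, and what your construction already proves, is the lemma with $v(T_0)\le K$ replaced by $v(T_0)\le 1+\Delta(K-1)\le \Delta K$. Since you never cut at $r$, the piece $T_0=R(r)$ contains $r$. It consists of $r$ together with at most $\Delta$ residual parts of at most $K-1$ vertices each. This weaker bound is all that Proposition~\ref{prop:sr_assignment} uses:
\begin{itemize}
\item the Lipschitz estimate~\eqref{eq:xi-term1} needs $v(T_i)\le\Delta K$ for every $i$, including $i=0$;
\item the bounds $s\le n^{5\delta}$ and $s+1\le 2n/K$ use only $v(T_i)\ge K$ for $i\in[s]$.
\end{itemize}
So instead of ending with ``cannot be proved'', you should state and prove this corrected version. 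Your root-step merging is then unnecessary.

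Your explicit ordering of the pieces by the depth of their roots is also more careful than the paper's reversal of the removal order. The literal ``if and only if'' in condition c) cannot hold when two roots are equidistant from $r$. What is actually used is that the parent of each $r_j$ lies in a piece of smaller index, so that concatenating top-down orderings of the pieces gives a top-down ordering of $T$. This follows directly from your depth ordering.
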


\begin{proof}
 Let~$T(u)$ be the subtree of $T$ induced by all vertices whose path to $r$ contains $u$.
  We construct the family~$(T_i,r_i)_{i\in [s]}$ in at most~$v(T)/K$ steps.
  Let~$S_0 = T$.
  For every~${i \geq 1}$, choose a vertex~${r_i \in V(S_{i-1})}$ (if it exists)
  such that $|S_{i-1}(r_i)| \geq K$ and that $|S_{i-1}(v)| \leq K$ for every
  child $v$ of  $r_i$ in $S_{i-1}$, further,   set~$T_i = S_{i-1}(r_i)$, and $S_{i} = S_{i-1} \setminus S_{i-1}(r_i)$ and root~$T_i$ at $r_i$. If no such vertex exists, then set $r_i=r$ and $T_i=S_{i-1}$ and terminate the process. Note that each $T_i$ but the last has between $K$ and $\Delta K$ vertices, and the last tree has less than $K$ vertices.
  By construction, we have $V(T) = \bigcup_{i=0}^{s}V(T_i)$. We invert the ordering of the trees so that $r_0=r$ and $v(T_0)\leq K$. Then  properties $a)-c)$ hold.
\end{proof}

\begin{proof}[Proof of Proposition \ref{prop:sr_assignment}]
  Set
  \[
    \gamma = \frac{1}{1+9k^5\log\Delta}, \quad
    \delta=\frac{\gamma}{10}, \qand
    \varepsilon = \min\left\{\frac{1}{4+12k^5\log\Delta},\frac{\delta}{4}\right\}.
  \]  
  Also, let $L$ be the number of non-isomorphic rooted oriented trees on $M$ vertices.
  We choose $n_0$ such that~${1/n_0 \ll \gamma,\eps,k,\Delta,L}$. Now, let $n \geq n_0$.
  Given $H$, $C$, $T$ and $\mathcal P$, 
  we apply Lemma~\ref{lemma:cutting} to the underlying graph of the rooted tree given by $(T,r)$,
  with~${K=n^{1-5\delta}}$ and~$\Delta$, to obtain an admissible
  family $\big\{(T_i,r_i)\big\}_{i=0}^s$ of
  disjointed rooted trees with $|T_0|\leq K$ and~${K\leq |T_i|\leq \Delta K}$
  for $i\in [s]$.
  
  Order each $V(T_i)$ in a top-down fashion, and concatenate these orderings to a top-down ordering $(v_1, v_2,\dots, v_n)$ of $V(T)$ so that   $V(T_i)$ comes before $V(T_j)$
  if $i < j$ . Obtain $\cP'$  from $\cP$ by deleting each $P$ within distance at most
  \begin{equation} \label{eq:m}
    m=\Big\lceil \frac{\log n^{1-\gamma}}{2\log\Delta}\Big\rceil-2
  \end{equation}

  \noindent from any of the~$r_i$. Note that we lose at most as many trees from $\cP$ as vertices within
  that distance from the roots $r_i$. Therefore, 
  \[
    |\cP'|\geq |\cP| - (s+1)\sum_{i=0}^{m}\Delta^i \geq |\cP| - \dfrac{2n}{K}\cdot\Delta^{m+1}\geq |\cP|- n^{1/2}.
  \]
 
  \noindent Fix $v \in V(H)$ and $S \subset (V(T)\setminus V(\cP'))\cup \Root(\cP')$.
  Let~$\varphi$ be any $(H,C,\cP')$-semi-random
  assignment of $T$.
  We start by proving (a).
  For every $0 \leq i \leq s$,
  we denote by~$X_i$ the number of vertices of $S$ in $T_i$ that are assigned to $v$ by $\varphi$, that is~${X_i=|\phi^{-1}(v)\cap V(T_i)\cap S|}$.
  We also set $Y=\sum_{i=0}^s X_i$.
  For $i=0,1,\dots, s$ we define the $\sigma$-algebras $\mathcal{F}_i=\sigma(X_0,X_1,\dots, X_i)$ and the martingale sequence $(Y_i)_{i=-1}^s$ by
  setting $Y_{-1}= \Exp[Y]$ and $Y_i= \Exp[Y | \mathcal{F}_i]$
  for~${i=0,1,\dots, s}$. Let us determine the Lipschitz property of this
  martingale sequence. For a fixed $i$ and a vector $x=(x_0,x_1,\dots, x_i)$, with $0\leq x_i \leq |T_i|$, let
  $\mathcal{E}_j = \cap_{\ell=0}^j \{X_\ell=x_\ell\}$ for each~${j=0,1,\dots, i}$
  and let $\mathcal{E}_{-1}$  be the whole probability space. The random
  variable $Y_{i}-Y_{i-1}$ is $\mathcal{F}_i$-measurable and takes a
  vector~$x$ to
 \begin{equation}
		\label{eq:Liptchitz-condition1} 
    Y_i-Y_{i-1} = \big(x_i-\Exp[X_i|\mathcal{E}_{i-1}] \big) + \sum_{h=i+1}^s\big(\Exp[X_h|\mathcal{E}_i]-\Exp[X_h|\mathcal{E}_{i-1}] \big).
 \end{equation}
 
 Note that the first term must satisfy
 \begin{equation}
 	\label{eq:xi-term1}
 	\big|x_i-\Exp[X_i|\mathcal{E}_{i-1}]\big| \leq |T_i|\leq \Delta K,
 \end{equation}
 
  \noindent where the first inequality is true since $0 \leq x_i, X_i \leq |T_i|$. Also note that
   for $h=i+1,\dots,s$ the value of $X_h$ only depends
  on $\phi(p_h)$, where $p_h$ is the parent  of $r_h$.
  Therefore, for every~${j\in \{i-1,i\}}$ and~${h>j}$ we have that

  \begin{equation*}
	 \begin{split}
 	\Exp[X_h | \mathcal{E}_j] &=  \sum_{w\in V(T_h)\cap S}\sum_{u\in V(H)}\P\big(\phi(w)=v  |\mathcal{E}_j \cap \{\phi(p_h)=u\}\big) \cdot	\P(\phi(p_h)=u)\\
 	&= \sum_{w\in V(T_h)\cap S}\sum_{u\in V(H)}\P\big(\phi(w)=v | \{\phi(p_h)=u\}\big) \cdot \P(\phi(p_h)=u).
 	\end{split}
 \end{equation*}
 
  We split the above summation by considering vertices from $V(T_h)\cap S$ at
  distance at most $m$ from $p_h$, which amount to at
  most $\Delta^{m+1}= O(n^{1/2 + 5\delta})$ vertices, separately from the vertices at a larger
  distance. We will now show by induction that for each vertex~${w\in V(T_h)\cap S}$ at distance at least $m$ from $p_h$, the
  $(H,C,\cP')$-semi-random assignment~$\phi$, conditioned on $\phi(p_h)=u$, satisfies
  \begin{equation}
  	\label{eq:prob-success}
    \P\big(\phi(w)=v | \{\phi(p_h)=u\}\big) = \frac{1\pm n^{-\gamma}}{k}.
  \end{equation}
 \noindent The base case is  $w$ being at distance $m+1$ from $p_h$. By definition of $\cP'$, the path $P$ between~$p_h$ and $w$ has no vertices from $V(\cP')$, and therefore the semi-random assignment of this path induces just a $
 P$-random walk. Then \eqref{eq:prob-success} follows by~\eqref{eq:m} and Proposition \ref{proposition:mixing}. For vertices of greater distance we consider two cases. If the parent of $w$ is $w'$ and $w'w$ belongs to a tree in $\cP'$, then the next step is deterministic. Let $\diamond \in \{+,-\}$ be such that $w\in N^{\diamond}(w')$ and let $v'\in V(H)$ be the vertex such that $v\in N^{\diamond}_C(v')$. If we assign $w'$ to $v'$, then we are forced to assign $w$ to $v$. By induction, we have that 
  \[ \P\big(\phi(w)=v | \{\phi(p_h)=u\}\big) = \P\big(\phi(w')=v' | \{\phi(p_h)=u\}\big) =\frac{1\pm n^{-\gamma}}{k}. \]
 
 If $w'w$ does not belong to a tree in $\cP'$, the argument is similar. We apply induction to calculate the probability of assigning $w'$ to each of the neighbours of $v$, with the right orientation, and multiply by the chance of reaching $v$ in the next step. Since $H$ is regular, we obtain \eqref{eq:prob-success}. We conclude that 
 
  \begin{equation}
	\label{eq:expXk1}  
		 (1- n^{-\gamma})\frac{|V(T_h)\cap S|}{k}\leq\Exp[X_h | \mathcal{E}_j] \leq \Delta^{m+1} + (1+ n^{-\gamma})\frac{|V(T_h)\cap S|}{k}.
 \end{equation}
 Combining \eqref{eq:Liptchitz-condition1} with 
   \eqref{eq:xi-term1}
  and \eqref{eq:expXk1},  we arrive at 
   \begin{equation}\label{xxxxx}
    |Y_i-Y_{i-1}|\leq \Delta K + 2(s+1)\Delta^{m+1}+\sum_{h=i+1}^s \left(2n^{-\gamma} \cdot   \frac{|V(T_h)\cap S|}{k}\right).
  \end{equation} 
 
  For the first term of~\eqref{xxxxx}, recall that $\Delta K = \Delta n^{1-5\delta}$.
  For the second term of~\eqref{xxxxx}, note that $s\leq n^{5\delta}$ for
  every $\delta \leq 1/10$, since $|T_h|\geq n^{1-5\delta}$,
  for every $h\in [s]$. Therefore, we have
  \[
    2(s+1)\Delta^{m+1} \leq 4n^{5\delta}\Delta^{m+1} =O\big(n^{1/2} \big),
  \]

  \noindent 
  The last term of~\eqref{xxxxx} is at
  most $2n^{1-\gamma}=2n^{1-10\delta}$, since the subtrees $T_h$ form a partition of
  an $n$-vertex tree. Combining all these bounds we have that
  \[
    |Y_i-Y_{i-1}| \leq n^{1-4\delta},
  \]

  \noindent which means that the martingale sequence $(Y_i)_{i=-1}^s$
  is $n^{1-4\delta}$-Lipschitz. To finish the argument, we only have to find the
  value of $Y_{-1}=\Exp[Y]$. 
  Similarly as in \eqref{eq:expXk1},
  but without considering each subtree separately, we see that $v_1$ is
  assigned to $v$ with probability $1/k$ and all other vertices, except the
  at most $\Delta^{m'+1}$ at distance at most $m'=\ceil{\frac{\log n^{1-3\eps}}{\log\Delta}} - 1$
  from $v_1$, are assigned to $v$ with probability $(1\pm n^{-3\varepsilon})/k$.
  Therefore, 
  \begin{equation*}
		Y_{-1} = O(\Delta^{m'+1}) + (1\pm n^{-3\varepsilon}) \frac{|S|}{k}
           = O(n^{1-3\eps})+(1\pm n^{-3\varepsilon}) \frac{|S|}{k}
    	     = \frac{|S|}{k} \pm n^{1-3\varepsilon}.
  \end{equation*}

  So, Azuma's inequality (Lemma \ref{lemma:Azuma}) implies that
  \[
    \P(|Y_s-Y_{-1}|\geq n^{1-\delta})\leq 2\exp\left(-\frac{n^{2-2\delta}}{2sn^{2-8\delta}}\right)\leq \exp\left(-\frac{n^\delta}{2}\right)\leq \exp(-n^{3\varepsilon}),
  \]
		
 \noindent Thus, with probability at least $1-\exp(-n^{3\varepsilon})$, we have 
  \[
    Y_s= \frac{|S|}{k} \pm (2n^{1-3\varepsilon}+n^{1-\delta}) = \frac{|S|}{k} \pm n^{1-2\varepsilon},
  \]
  Now we recall that $Y_s=Y_s(v)$ is a function of $v$. 
  By the union bound, the probability of $Y_s(v)=\frac{|S|}{k} \pm n^{1-2\varepsilon}$ is at least 
  $1-n\exp(-n^{3\varepsilon})\geq 1-\exp(-n^{2\varepsilon})$. This proves item (a),
  since $1-\exp(-n^{2\eps}) \geq 1-\exp(-n^\eps)$ and $n^{1-2\eps} \leq n^{1-\eps}$.

  We now prove item (b) of the proposition. We say that two rooted oriented trees $(P,r)$ and $(P',r')$ are isomorphic if there is a bijection $f:V(P)\rightarrow V(P')$ that preserves oriented edges and such that $f(r)=r'$. First suppose that $\cP'$ has only one isomorphism class and let $S = \Root(\cP')$. By the proof of item (a), with probability at least $1-\exp(-n^{2\varepsilon})$ we have
  \[|\phi^{-1}(v)\cap S| = \dfrac{|S|}{k} \pm n^{1-2\varepsilon}\]
  for every $v\in V(H)$. Since the assignment of each subtree to $V(C)$ is deterministic once the first vertex is assigned, 
  and since each subtree has at most $M$ vertices, we have
  \[
    |\phi^{-1}(v)\cap V(\cP')| = \dfrac{|V(\cP')|}{k} \pm Mn^{1-2\varepsilon}.
   \]
  \noindent Now, if $\cP'$ has more than one isomorphism class, we just sum the above equation over these classes.
  Since the number of classes is at most $L$, we obtain that
  \[
    |\phi^{-1}(v)\cap V(\cP')| = \dfrac{|V(\cP')|}{k} \pm Ln^{1-2\varepsilon},
  \]
  with probability at least $1-L\exp(-n^{2\varepsilon})\geq 1-\exp(-n^{\varepsilon})$. This finishes the proof of (b) since $Ln^{1-2\varepsilon}\leq n^{1-\varepsilon}$. 
\end{proof}

\section{Proof of Theorem~\ref{thm:main}}
\label{sec:proof_main}

From the premises of the theorem we have $0<1/n \ll \nu \leq \tau \ll \gamma, \Delta^{-1}$, which also satisfies
the hierarchies of Theorem~\ref{thm:hamilton}, Lemma~\ref{lemma:in_expansion_paths}, and Lemma~\ref{lemma:skewed}.
We insert constants~$\nu'$ and $\tau'$ as in Lemma~\ref{lemma:partition} such that
\[
    1/n \ll\nu' \ll \nu \leq \tau \ll \tau'\ll \gamma, \Delta^{-1}.
\]
We also insert certain $\eps_i$ and $M_i$, for $i\in[4]$. Namely, let $n_i, M_i$ be given by the
Diregularity Lemma (Lemma~\ref{lemma:diregularity}) for input $\eps_i$, $m_0=1/\eps_i$, for $i\in[4]$.
For ease of notation, we set
\begin{equation}
    \ell = 6\Big\lceil\frac{8}{\gamma}\Big\rceil M_4.
\end{equation}
Let $\beta$ and $n_5$ be given by Proposition~\ref{prop:sr_assignment} for input $\Delta$, $k=\eps_1^{-1}$, $M=\ell$.
We require $n\ge\max_{i\in[5]}n_i$. This gives the following  hierarchy of constants, where we also
insert~$\beta$ (from Proposition~\ref{prop:sr_assignment}), and further parameters $\delta_1, \delta_2, d, \eps^*_1,\eps^*_2$.
{\footnotesize
\begin{equation}
  \label{eq:parameters}
  \hskip-.2cm 0 < \frac{1}{n} \ll\beta \ll \frac{1}{M_1} \ll \eps_1 \ll \frac{1}{M_2} \ll \eps_2 \ll \eps^{*}_1 \ll \delta_1 \ll \frac{1}{M_3} \ll \eps_3 \ll \frac{1}{M_4} \ll \eps_4 \ll \eps^{*}_2 \ll \delta_2 \ll d 
  \ll \nu'. 
\end{equation}
}

\noindent Let $D$ be a robust $(\nu,\tau)$-out-expander on $n$ vertices with $\semidegree(D) \geq \gamma n$, and
let $T$ be an oriented tree on~$n$ vertices with $\Delta(T) \leq \Delta$.

\subsection{Preparing $T$}
\label{ssec:cases}
We set the following auxiliary constant:
\[
    \lambda = \frac{1}{4(\Delta+1)^2}. 
\]
Apply Lemma~\ref{lemma:split_tree} with parameters $\Delta, n$,
and $\ell$, to obtain an edge $e = (r_A,r_B)$ of $T$ such that removing
$e$ from $T$ yields two
trees~$T_A$ and~$T_B$, each of which contains
at least~${n/(\Delta+1)^2}$ vertices. 
We may assume that $|T_B| \geq |T_A|$.
Furthermore, both trees $T_A$ and $T_B$ have
either
\begin{enumerate}
  \item[(T1)] a set of at least $\lambda n/\ell$ leaves, or
  \item[(T2)] a set of at least $\lambda n/2\ell$ vertex-disjoint
        directed bare paths on $\ell$ vertices, or
  \item[(T3)] a set of at least $\lambda n/4\ell$
        switches and a set of at
        least~$3\lambda n/128$ 
        paths on $4$ vertices all with the same orientation.
        Moreover, all the paths in these two collections are vertex disjoint.  
\end{enumerate}

We say that the tree $T$ is \defi{leafy},~\defi{bare}, or \defi{switchy}
depending on whether $T_A$ and $T_B$ satisfy condition~(T1), (T2), or (T3),
respectively. Furthermore, we select a specific subset of the parameters defined
in~\eqref{eq:parameters} based on the type of $T$.
If $T$ is leafy or switchy, we set
\begin{equation*}
  \eps_A = \eps_1,\quad
  \eps_B = \eps_2,\quad
  \eps^{*} = \eps^{*}_1, \quad
  \delta = \delta_1,\quad 
  M_A = M_1,\qand
  M_B = M_2;
\end{equation*}
otherwise we set
\begin{equation*}
  \eps_A = \eps_3,\quad
  \eps_B = \eps_4,\quad
  \eps^{*} = \eps^{*}_2,\quad
  \delta = \delta_2,\quad
  M_A = M_3,\qand
  M_B = M_4.
\end{equation*}
Note that in both cases we have the following hierarchy of constants
\begin{equation}\label{hiera2}
  0 < \beta \ll \frac{1}{M_A} \ll \eps_A \ll \frac{1}{M_B} \ll \eps_B \ll \eps^{*} \ll \delta \ll d \ll \nu'\ll \gamma < 1,
\end{equation}
which also satisfies the hierarchies of Propositions~\ref{proposition:regular-cherry-subgraph} and~\ref{proposition:mixing} with $k$ taken as either $\eps_A^{-1}$ or $\eps_B^{-1}$.

\subsection{Preparing $D$}\label{ssec:preparing_D}
By Lemma~\ref{lemma:in_expansion_paths}, we have
that $D$ is a robust $(\nu^3,2\tau)$-expander.
We introduce an auxiliary constant $\mu' > 0$ such
that
\begin{equation} \label{eq:mu_p}
    (1-3\eps_A/4)(1-\eps_B)\mu' + \eps_B = |T_A|/n.
\end{equation}
By applying Lemma~\ref{lemma:partition} with $\mu'$ and $\gamma$, we
obtain a
partition~${\{A', B'\}}$ of $V(D)$ such that:
\begin{enumerate}[(D1')]
  \item $|A'| = \ceil{\mu' n}$,\label{G1}
  \item $|N^{\diamond}(v,A')| \geq \frac{\gamma}{2}|A'|$, for all $v \in V(G)$ and $\diamond \in \{+,-\}$, 
  \item $|N^{\diamond}(v,B')| \geq \frac{\gamma}{2}|B'|$, for all $v \in V(G)$ and $\diamond \in \{+,-\}$, and
  \item $D[A']$ and $D[B']$ are robust $(\nu',\tau')$-expanders.
\end{enumerate}

Apply the Diregularity Lemma (Lemma~\ref{lemma:diregularity}) to $D[B']$
with parameters $\eps = \eps_B/16$, $m_0 = m_B$,
and~${d_B = d + 15\eps_B/16}$
to obtain a partition of order~${k_B\geq m_B}$ of~$B'$, as well as a regularised digraph~$D^{*}_{B'}$ of~$D[B']$.
Let~$R_{B}^{*}$ be the corresponding~$(\eps_B/16,d_B)$-reduced graph. 
By Lemma~\ref{lemma:reduced},  there exists a spanning oriented subgraph $R_B \subseteq R_B^{*}$
with~$\semidegree(R_B) \geq \frac{\gamma}{8}|R_B|$ and $R_B$ is a robust~$(\nu'/24,2\tau')$-expander.
Thus by Theorem~\ref{thm:hamilton}, $R_{B}$ contains a
Hamilton cycle $H_B=(V_1'',\dots,V_{k_B}'')$.
 We aim to apply the Blow-up Lemma (Lemma~\ref{lemma:blow_up}) to embed the tree. For condition~(C6) of the lemma, 
the regular pairs must have density close to $d$ in terms of~$\eps_B$. In order to achieve this, we apply Fact~\ref{fact:d_close}
to all regular pairs with density exceeding $d_B$. This gives a spanning subgraph $D^{**}_{B'} \subseteq D^{*}_{B'}$ where
for all~${(V_i,V_j) \in A(R_B)}$, the pair $(V_i,V_j)$ is $(\eps_B/8,+)$-regular with density $d_B \pm \eps_B/16$ in~$D^{**}_{B'}$.

Now, Fact~\ref{fact:super_regular} allows us to move $\ceil{\frac{\eps_B}{2} |V_i''|}$ vertices from each cluster
$V_i''$ to $V_0''$ to obtain a new partition $\{V_0',\dots,V_{k_B}'\}$ of $B'$.
Next, we redistribute $\ceil{\eps_B|B'|} - |V_0'|$ vertices
from~${V_1' \cup \dots \cup V_{k_B}'}$ into $V_0'$
in a balanced way to obtain a new partition $\{V_0,\dots,V_{k_B}\}$
of $B'$ such that $|V_0| = \ceil{\eps_B|B'|}$,
and~${|V_i| = |V_j|}$, for
all~${1 \leq i < j \leq k_B}$. 
By the 
Slicing Lemma~(Fact~\ref{fact:slicing}), 
each pair
$(V_i,V_{i+1})$ is $(\eps_B,d)$-super-regular, and 
each pair~$(V_i,V_j)$ is $(\eps_B,+)$-regular
with density either~0 or~${d \pm \eps_B}$,
for all~${1 \leq i < j \leq k_B}$.

We now apply the Diregularity Lemma (Lemma~\ref{lemma:diregularity}) 
to $D[A' \cup V_0]$ 
with parameters $\eps = \eps_A/16$, $m_0 = m_A$,
and~${d_A = d + 15\eps_A/16}$ to obtain a regularised graph $D^{*}_{A'\cup V_0}$ with
a partition of size $k_A \geq m_A$.
Let $R_{A}^{*}$ be the 
corresponding $(\eps_A/16,d_A)$-reduced graph, which by Lemma~\ref{lemma:reduced} has a spanning oriented subgraph $R_A \subseteq R_A^{*}$
that is a robust~$(\nu'/24,2\tau')$-expander. By Theorem~\ref{thm:hamilton}, $R_A$ has a directed Hamilton cycle~$H_A=(U_1'',\dots,U_{k_A}'')$.
By  Fact~\ref{fact:d_close}, $D^*_{A'\cup V_0}$ has a spanning subgraph $D^{**}_{A'\cup V_0}$ such that
all pairs $(U_i,U_j) \in A(R_A)$ are $(\eps_A/8,+)$-regular  with density~$d_A \pm \eps_A/16$ in~$D^{**}_{A'\cup V_0}$.
By Fact~\ref{fact:super_regular},  moving $\ceil{\frac{\eps_A}{2}|U_i'|}$ vertices
from each cluster we obtain a new partition $\{U_0',\dots,U_{k_A}'\}$ of~${A'\cup V_0}$ such that
each pair~${(U'_i,U'_{i+1})}$ is $(\eps_A/4,d_A-13\eps_A/16)$-super-regular and all pairs~${(U_i',U_j')}$ are~$(\eps_A/4, +)$-regular
with density either $0$ or~${d_A \pm 13\eps_A/16}$.
Note that
\begin{equation}
    \label{eq:u_0}
    |U_0'| \leq \frac{3\eps_A}{4}|A'\cup V_0|.   
\end{equation}

Therefore, by~\eqref{eq:mu_p}, we obtain
\begin{align*}
    |A'\cup V_0| - |U_0'| \geq  \Big(1-\frac{3\eps_A}{4}\Big)|A'\cup V_0|
                         = \Big(1-\frac{3\eps_A}{4}\Big)(\ceil{\mu' n} + \ceil{\eps_B|B'|})
                         \geq |T_A|.
\end{align*}

On the other hand,
\begin{align*}
    |A'\cup V_0| - |U_0'| \leq  |A'\cup V_0|\le  |T_A| + \frac{3\eps_A}{4}|A' \cup V_0|.
\end{align*}

Thus, we can redistribute at most $\frac{3\eps_A}{4}|A' \cup V_0|$ vertices from
$U_1'\cup \dots \cup U_{k_A}'$ to $U_0'$ 
in a balanced way to obtain a new partition~$\{U_0,\dots,U_{k_A}\}$
of $A' \cup V_0$ such that
\begin{equation}
    \label{eq:n_a}
    |A' \cup V_0| - |U_0| = |T_A|, 
\end{equation}
and $|U_i| = |U_j|$, for all $1 \leq i < j \leq k_A$. 
By the 
Slicing Lemma~(Fact~\ref{fact:slicing}),
each pair~$(U_i,U_j)$ is $(\eps_A,+)$-regular
with density either~0 or ~${d \pm \eps_A}$,
and each pair
$(U_i,U_{i+1})$ is $(\eps_A,d)$-super-regular.
Now, let $$A = (A' \cup V_0) \setminus U_0\text{ and }B = (B' \setminus V_0) \cup U_0.$$
Note that, by~\eqref{eq:n_a}, the set $A$ has exactly the same number of
vertices as $T_A$. Also,
the new partition $\{A,B\}$ of $V(D)$ differs from $\{A',B'\}$
only by a small fraction of vertices.
Additionally, we have not discarded arcs incident to vertices in $U_0$, nor arcs 
between $A$ and $B$. Therefore, the partition $\{A,B\}$ of~$V(D)$ satisfies
the following properties:
\begin{enumerate}[(D1)]
    \item $|A| = |T_A|$,
    \item $|N_G^{\diamond}(v,A)| \geq \frac{\gamma}{4}|A|$, for all $v \in B$ and $\diamond \in \{+,-\}$, 
    \item $|N_G^{\diamond}(v,B)| \geq \frac{\gamma}{4}|B|$, for all $v \in U_0$ and $\diamond \in \{+,-\}$, and
    \item $D[A]$ and $D[B]$ are robust $(\nu'/2,\tau')$-expanders.
\end{enumerate}

Finally, let $D_A$ be the digraph obtained from $D^{**}_{A'\cup V_0}$ by removing all vertices from~$U_0$.
Let~$D_B$ be the digraph obtained from~$D^{**}_{B'}$ by
adding all vertices from~$U_0$, and connecting each vertex in $U_0$ to a vertex in~$B$ if such
an arc exists in $D$. 
Note that, by~\eqref{eq:u_0} and since we moved at most $\frac{3\eps_A}{4}|A\cup V_0|$ vertices
to $U_0'$ in order to obtain $U_0$, we have
 \begin{equation}
   \label{eq:size_u0}
  |U_0| \leq 2\eps_A|A' \cup V_0| \leq 7\eps_A |B|.
 \end{equation}

\subsection{Preliminary assignment.}
\label{ssec:preliminary_assignment}

We will use Proposition~\ref{prop:sr_assignment}
to find a preliminary assignment of the tree~$T_B$ into the reduced graph~$R_B$.
This initial assignment ensures that almost all the
vertices of $T_B$ are well-distributed across the vertices of the reduced graph.
Furthermore, it has the property that some special subtrees of $T_B$, selected
according to the type of~$T$, are well-distributed along the arcs of the
Hamilton cycle $H_B$. This prepares the ground for the
later incorporation of the exceptional vertices into the assignment, the
correction of its imperfections, and the embedding of $T_B$.

More precisely,
based on the type of~$T$,
we define two collections of subtrees~$T_B$, denoted by~$\cP$ and~$\cR$.
The collection $\cP$ will be used in the embedding of $T_B$, while
$\cR$ will be used later to incorporate the
exceptional vertices, and to adjust the initial assignment of~$T_B$.
We first describe how these sets are selected according to the type of $T$,
and then show how to apply Proposition~\ref{prop:sr_assignment} to
obtain the desired preliminary assignment.

\sepline 

\noindent \textit{Leafy trees.} 
 If $T$ is leafy, then we let 
  $\cP$ and $\cR$ consist of a collection of subtrees induced by a set of
leaves and their parents, defined as follows. We say that a leaf in an oriented
tree is an \defi{out-leaf} if it has out-degree equal to one, and an \defi{in-leaf}
if it has in-degree equal to one.
For each~${\diamond \in \{+,-\}}$,
let $L^{\diamond} \subseteq V(T_B)$ be the set
of all $\diamond$-leaves of~$T_B$. 
Let~${\circ \in \{+,-\}}$ be such that $|L^{\circ}|= \max\{ |L^{-}|, |L^{+}|\}$. Consider the set $\mathcal S$ of $2$-vertex subtrees formed by the set $P$ of parents of leaves in~$L^{\circ}$
together with one of their leaves in $L^{\circ}$. Then~${|\mathcal S|\ge\lambda n/(2\ell\Delta)}$. Being a subset of a tree,
the set $P$ has an independent subset $P'$ with $|P'|\ge |P|/2$.
Let~$\mathcal{S}'$ consist of all trees from $\mathcal S$ that meet $P'$. 
Then the leaves in trees of $\mathcal{S}'$ are pairwise at distance
at least~$4$. Moreover, let 
$\cP$ be any set of size $\ceil{|\mathcal S'|/2}$ of~$\mathcal{S}'$, and set  $\cR=\mathcal{S'} \setminus \mathcal P$.
Then, we have
\begin{equation}\label{sizePandR}
  |\cP| \ge |\cR|\geq  \left\lfloor\frac{|\mathcal S'|}{2}\right\rfloor \geq \left\lfloor \frac{|\mathcal S|}{4} \right\rfloor \geq \frac{\lambda n}{8\ell\Delta} - 1 \geq \frac{\lambda n}{16\ell \Delta} \gg \delta n.
\end{equation}

\sepline

\noindent\textit{Bare trees.} Assume that $T$ is bare. In this
case, the sets~$\cP$ and $\cR$ consist of
a collection of vertex-disjoint directed bare paths defined as follows.
Let $p=3\ceil{\frac{8}{\gamma}}k_B \leq \ell/2$. By~(T2),
the tree $T_B$ contains a set of $\lambda n/2\ell$
vertex-disjoint directed bare paths, each on $\ell$ vertices.
Splitting each of these paths into $\floor{\ell/p}$ paths of 
length $p$ and ignoring the first one,
we obtain a set of paths on $p$ vertices that are pairwise
at distance at least $p \geq 4$. Moreover, at least half of these paths 
start with an out- or an in-vertex. We consider only the subpaths that
start with the more frequent of these two types.
We then distribute
these subpaths evenly between the sets $\cP$ and $\cR$, discarding one of them if necessary.
Then
\[
  |\cP| = |\cR| \geq \frac{\lambda n}{4\ell}\Big(\floor{\frac{\ell}{p}}-1\Big) - 1 
  \geq \frac{\lambda n}{10p} \gg \frac{\delta n}{p}.
\]
\sepline

\noindent\textit{Switchy trees.} If $T$ is switchy, then
the set $\cP$ will consist of vertex-disjoint
paths of length~$4$, and the set $\cR$ will consist of subtrees
induced by a set of switches and their neighbours. More precisely, they 
are defined as follows.
By~(T3), the tree $T$ contains a set~$\cP'$ of at least~$3\lambda n/128$ 
paths on $4$ vertices all with the same orientation, and a set $\mathcal R$   
of at least~$\lambda n/4\ell$ switches, all vertex-disjoint.
We delete some of the members of~$\mathcal P'$ to obtain a subset $\cP$ such that each of the elements of $\cP$ has distance at least 4 from any other element. 
We can do this, for instance, by selecting a path $P \in \cP'$ and
removing from $\cP'$ all paths that contain vertices at distance at most $4$ from any vertex of $P$. Then select
another path from the updated set $\cP'$ and repeat this procedure until no further paths can be removed. We denote the resulting set by $\cP$. Note that, since $\Delta(T) \leq \Delta$, at most $4\Delta^4$ paths are removed from $\cP'$ at each step.
This gives
\[
  |\cP| \geq \frac{3\lambda n}{512\Delta^4} \gg \delta n, \qand |\cR| \geq \frac{\lambda n}{4\ell}.
\]

\sepline

\noindent Now that the collections $\cP$ and $\cR$ are defined, our
next goal is to find an assignment of~$T_B$ to~$R_B$. 
By applying Proposition~\ref{proposition:regular-cherry-subgraph} to
the reduced digraph~$R_B$ after removing the arcs of~$H_B$, we obtain a regular spanning subgraph  $C'$ with
the cherry property. 
Notice that $C = C' \cup H_B$ is regular 
and has the cherry property.  Let $(v_1,\dots,v_{|B|})$ be a top-down ordering of $V(T_B)$ such that $v_1 = r_B$, and
let $\cP' \subseteq \cP$ and $\cR' \subseteq \cR$ with
\begin{equation}
  \label{eq:size_pr}
  |\cP'| \geq |\cP| - n^{1/2},\qand |\cR'| \geq |\cR| - n^{1/2},
\end{equation}
obtained by applying Proposition~\ref{prop:sr_assignment} twice, once for~$\cP$ and
once for $\cR$. Now, consider a $(C,H_B,\cP' \cup \cR')$-semi-random assignment $\varphi$ of $T$ into $C$. 
By Proposition~\ref{prop:sr_assignment}, with probability at least $1-5\exp(-n^{\beta})$, the assignment $\phi$
satisfies the following properties for each $i \in [k_B]$:  

\begin{enumerate}
  \item[(A1)] $|\phi^{-1}(V_i) \cap \Root(\cP')| = \frac{|\cP'|}{k_B} \pm n^{1-\beta}$,
  \item[(A2)] $|\phi^{-1}(V_i) \cap \Root(\cR')| = \frac{|\cR'|}{k_B} \pm n^{1-\beta}$, 
  \item[(A3)] $|\phi^{-1}(V_i) \cap V(\cP')| = \frac{|V(\cP')|}{k_B} \pm n^{1-\beta}$,
  \item[(A4)] $|\phi^{-1}(V_i) \cap V(\cR')| = \frac{|V(\cR')|}{k_B} \pm n^{1-\beta}$, and
  \item[(A5)] $|\phi^{-1}(V_i) \cap V(T_B)| = \frac{|V(T_B)|}{k_B} \pm 3n^{1-\beta}$, 
\end{enumerate}

where the last property follows by applying Proposition~\ref{prop:sr_assignment} with $S=V(T_B)\setminus (V(\cP')\cup V(\cR'))$
together with (A3) and (A4).
Hence, since $1-5\exp(-n^{\beta}) > 0$, we can fix a~${(C,H_B,\cP'\cup\cR')}$-semi-random assignment $\phi$ such that (A1)--(A5) hold. Moreover, since~$\phi$ is a semi-random assignment it follows that
\begin{enumerate}
    \item[(A6)] every arc of $\cP' \cup \cR'$ is assigned to an arc of $H_B$.
\end{enumerate}

\subsection{Incorporating the  vertices of $U_0$.}
\label{ssec:exceptional_vertices}

We now modify the assignment $\phi$ from the
last step, to incorporate the  vertices of $U_0$.
At the end of this process, we will have an assignment
$\phi':V(T_B)\rightarrow V(R_B) \cup U_0$, and a
subset~$\cR'' \subseteq \cR'$. 

We start by
setting~${\phi'(v) = \phi(v)}$, for all $v \in V(T_B)$,
and setting~${\cR'' = \cR'}$.
We then incorporate the vertices of~$U_0$ one at a time. For each
of these vertices, we remove one subtree from $\cR''$.
In the following, we describe the incorporation of one exceptional vertex.

Let~${u \in U_0}$. 
By (D3), we have
$|N^{\diamond}(u,B)| \geq \frac{\gamma}{4}|B|$, for each $\diamond \in \{+,-\}$.
Thus, there exists~${i^{+},i^{-} \in [k_B]}$ such that 
\begin{equation}\label{eq:deg_exp} 
  \deg^{+}_{D_B}(u,V_{i^{+}}) \geq \frac{\gamma|V_{i^{+}}|}{8} \qand 
  \deg^{-}_{D_B}(u,V_{i^{-}}) \geq \frac{\gamma|V_{i^{-}}|}{8}.
\end{equation}
The process we use to incorporate $u$ depends on the type of $T$.

\sepline

\noindent\textit{Leafy and switchy trees.} If $T$ is leafy or switchy,
we reassign one leaf or switch of~$T_B$ to~$u$. For this, first assume that the root of each tree in $\cR''$ 
is an in-vertex in this tree.
Choose an arbitrary~${x \in \Root(\cR'')}$
such that~$\phi'$ assigns $x$ to the cluster~$V_{i^{+}}$.
Observe that we can always find such vertex~$x$ 
since by (A2), \eqref{eq:size_u0} and~\eqref{eq:size_pr}
we have
\begin{equation*}
  \label{eq:s2_l}
  \begin{split}
  |(\phi')^{-1}(V_{i^{+}})\cap \Root(\cR'')| &\geq |(\phi')^{-1}(V_{i^{+}}) \cap \Root(\cR')| - |U_0| > 0.
  \end{split}
\end{equation*}
Hence, by (A6) and by the assumption that~$\Root(\cR'')$ consists of in-vertices,
the child~$y$ of~$x$ in $\cR''$ 
is assigned
to~$V_{i^{+}-1}$ by~$\phi$. We reassign~$y$ to~$u$ by
setting $\phi'(y) = u$. 
Observe that, since $y$ is an out-vertex, all its neighbours are assigned to~$V_{i^{+}}$ by $\phi'$,
and therefore, this new assignment $\phi'(y)$ is still valid. We  finish by removing  from $\cR''$ 
the tree whose root is $x$.
The process is analogous when $\Root(\cR'')$ consists of out-vertices, with the difference that we
use $V_{i^{-}}$ instead of~$V_{i^{+}}$.

\sepline

\noindent\textit{Bare trees.} If $T$ is bare, we reassign the
second vertex of some path in $\cR''$ to $u$. Moreover, in order
to maintain the assignment valid, we will also need to modify the
assignment of some other vertices of this path.

First, assume that the
paths in $\cR''$ start with an out-vertex.
We choose a path~${Q=x_0x_1\dots x_p \in \cR''}$,
such that $x_0$ is assigned to  $V_{i^{-}}$.
Such a path $Q$  exists since  by~\eqref{eq:size_u0}
and~\eqref{eq:size_pr}, we have that $|(\phi')^{-1}(V_{i^{-}})\cap \Root(\cR'')| > 0$.

By (A6), and
since~$p$ is a multiple of~$k_B$,  vertex $x_p$ is also
assigned to $V_{i^{-}}$. We reassign some vertices of $Q$
in order
to incorporate~$u$, while preserving the property that $Q$ starts
and ends in $V_{i^{-}}$. We begin by reassigning the
vertex $x_1$ to $u$, and the vertex $x_2$ to $V_{i^{+}}$. 
If the cluster $V_{i^{-}+3}$ belongs to the
out-neighbourhood of $V_{i^{+}}$ then no further reassignment
of~$P$ would be  needed.
However, this is not always the case and 
to address this issue we rely on the notion of skewed-traverses,
introduced in Section~\ref{sec:expansion}. 
Let
\[
  W = \Big((V_{i^{+}},V_{i_1}), (V_{i_1-1}, V_{i_2}),\dots, (V_{i_t-1},V_{i^{-}+3})\Big)
\]
be a $(V_{i^{+}}, V_{i^{-}+3})$-skewed-traverse
of length $t \leq \lceil 2/\nu' \rceil+1$ in $R_B$, which
exists by Lemma~\ref{lemma:skewed}. We  reassign the
vertices $x_3,\dots,x_{3+tk_B-1}$ so that each cluster
receives the same number of these vertices, and so that  vertex $x_{3+tk_B-1}$
is assigned to the cluster $V_{i_t-1}$, which belongs to the in-neighbourhood
of~$V_{i^{-}+3}$. To do so, for each $j \in [t]$, we reassign the
vertices $x_{3+(j-1)k_B},\dots,x_{3+jk_B-1}$ to the Hamilton path
induced by $V_{i_j},V_{i_j+1},\dots,V_{i_{j}-1}$.
Since $W$ exists, for each $j \in [t-1]$, we
know that there is an arc
from the cluster $V_{i_j-1}$ assigned to~$x_{3+jk_B -1}$ 
to the cluster $V_{i_{j+1}}$ assigned to~$x_{3+jk_B}$.
For an illustration, see Figure~\ref{fig:incorp_bare}.
This 
reassignment preserves
the number of vertices
assigned to each cluster, except for~$V_{i^{+}}$, which
now has one additional vertex,
and the clusters $V_{i^{-}+1}$, and possibly~$V_{i^{-}+2}$,
each of which now has one fewer vertex.
Furthermore, since the vertex~$x_{3+tk_B-1}$ is
assigned to a cluster in the in-neighbourhood of~$V_{i^{-}+3}$, the new
assignment is valid since it also has the property that~$Q$ starts
and ends in~$V_{i^{-}}$.

\begin{figure}[ht]
    \centering
    \input{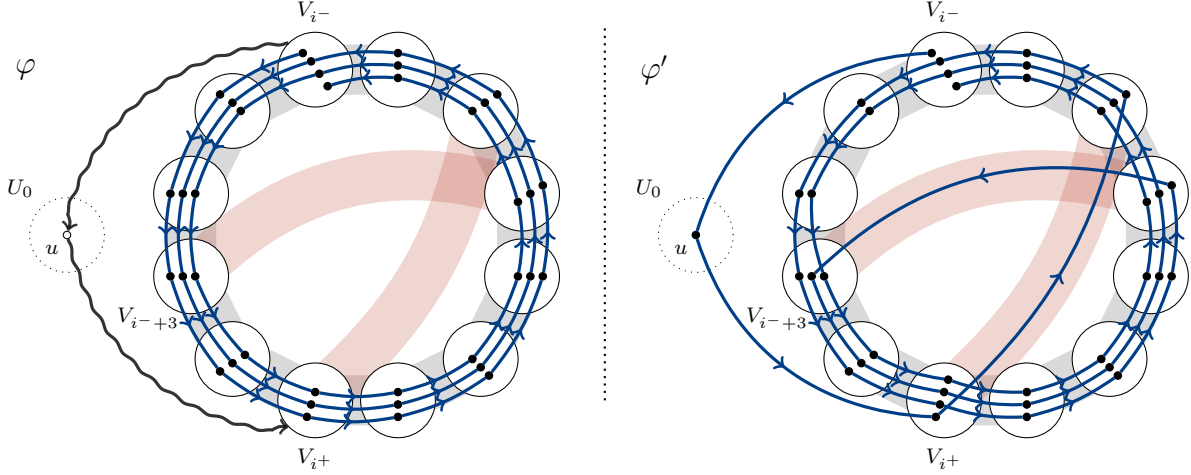}
    \caption{Incorporation of $u$ when $T$ is bare.}
    \label{fig:incorp_bare}
\end{figure}

The process is analogous when the paths in $\cR''$ start with an in-vertex.
The difference is that we select a path starting in $V_{i^{+}}$, reassign
the vertices $x_{3},\dots,x_{3+tk_B-1}$
using a $(V_{i^{+}+3}, V_{i^{-}})$-skewed traverse, and
wind backwards around~$H_B$.

\sepline

By repeating this procedure for each $u \in U_0$, we obtain
a new assignment
\[
  \phi':V(T_B) \rightarrow V(R_B) \cup U_0,
\]
and also a new collection of subtrees $\cR'' \subseteq \cR'$.
As we removed only one subtree from $\cR'$ for
each vertex in $U_0$, and  by~\eqref{eq:size_u0}, we have
\begin{equation}
  \label{eq:size_rpp}
  |\cR''| \geq |\cR'| - n^{1/2} - |U_0| \gg \eps_A n_B.
\end{equation}
In all cases, the assignments of~$\cP'$, and~$\cR''$ were not changed, and for each vertex
in~$U_0$ the reassignment  changed the number of vertices assigned to a cluster by at most one. 
 Thus, combining~\eqref{eq:size_u0} with the fact that~${3n^{1-\beta} \leq \eps_An}$,
which holds because $1/n \ll \beta \ll \eps_A$
yields that for all~${i \in [k_B]}$ we have
\begin{enumerate}
  \item[(A1')] $|(\phi')^{-1}(V_i)\cap \Root(\cP')| = \frac{|\cP'|}{k_B} \pm n^{1-\beta}$,
  \item[(A2')] $|(\phi')^{-1}(V_i)\cap \Root(\cR'')| = \frac{|\cR''|}{k_B} \pm n^{1-\beta}$,
  \item[(A3')] $|(\phi')^{-1}(V_i)\cap V(\cP')| = \frac{|V(\cP')|}{k_B} \pm n^{1-\beta}$,
  \item[(A4')] $|(\phi')^{-1}(V_i)\cap V(\cR'')| = \frac{|V(\cR'')|}{k_B} \pm n^{1-\beta}$, 
  \item[(A5')] $|(\phi')^{-1}(V_i)\cap V(T_B)| = \frac{|V(T_B)|}{k_B} \pm 8\eps_A n_B$, and
  \item[(A6')] every arc of $\cP' \cup \cR''$ is assigned to an arc of $H_B$.
\end{enumerate}

\subsection{Obtaining a balanced assignment.}
\label{ssec:balanced_assignment}

We now adjust the assignment $\phi'$ to obtain a perfectly balanced
assignment. For this, we will reassign some vertices from the
subtrees of~$\cR''$. At the end of this process, we will have obtained a new
assignment~${\phi'':V(T_B) \rightarrow V(R_B) \cup U_0}$ such that the
vertices of~$T_B$ are perfectly balanced across~${V(R_B) \cup U_0}$.

We start by setting~${\phi''(v) = \phi'(v)}$,
for every $v \in V(T_B)$, and $\cR^* = \cR''$. Then,
we choose clusters~$V_{i^{>}}$ and~$V_{i^{<}}$ such
that 
\[
  |(\phi'')^{-1}(V_{i^{>}})| > |V_{i^{>}}| \qand |(\phi'')^{-1}(V_{i^{<}})| < |V_{i^{<}}|.
\]
These clusters exist unless the assignment $\phi''$ is already perfectly balanced (in which case we stop). Next,
we describe how to decrease the number of vertices assigned
to~$V_{i^{>}}$ by one and increase the number assigned
to~$V_{i^{<}}$ by one, without affecting the number of
vertices assigned to the other clusters.
Throughout the process, we maintain the property that at most
one vertex from $V(\cR^*)$ in each cluster is reassigned.

The process to
improve the assignment depends on the type of $T$.

\sepline

\noindent\textit{Leafy and switchy trees.} If $T$ is leafy or switchy,
we reassign certain leaves or switches to adjust
the assignment. Suppose first that the root of each tree in $\cR^*$ is an out-vertex in this tree.
If there exists an arc
from $V_{i^{>}-1}$ to $V_{i^{<}}$, we can simply reassign
one out-leaf or out-switch vertex assigned from $V_{i^{>}}$
to $V_{i^{<}}$ (and leave all other assignment as they are). However, this arc may not exist and therefore, we again use skewed-traverses. Let
\[
  W_1 = \Big((V_{i^{>}-1},V_{i_1}), (V_{i_1-1}, V_{i_2}),\dots, (V_{i_t-1},V_{i^{<}})\Big)
\]
be a $(V_{i^{>}-1},V_{i^{<}})$-skewed-traverse of
length $t \leq \lceil 2/\nu' \rceil+1$,
which exists by Lemma~\ref{lemma:skewed}. For convenience, let $i_0 = i^{>}-1$,
and $i_{t+1} = i^{<}$. Then, for each~${j \in [t]}$, we select one
vertex~${v \in \Root(\cR^*)}$ which is assigned to $V_{i_{j-1}}$ by $\phi''$.
Observe that 
such a vertex $v$ always exists. Indeed, by~\eqref{eq:size_rpp}, and since we reassign only one
vertex from~${(\varphi'')^{-1}(V_{i_{j-1}})\cap\Root(\cR^*)}$, 
for each pair of indices $(i^{>},i^{<})$, and the number of such pairs is bounded above
by $8\eps_A k_B n$, we have $|(\phi'')^{-1}(V_{i_{j-1}})\cap \Root(\cR^*)| > 0$.

By (A6'), the child $y$ of $v$ is assigned
to $V_{i_{j-1}+1}$ by $\phi''$. 
Hence, by setting $\phi''(y) = V_{i_j}$
we decrease the number of vertices assigned
to $V_{i_{j-1}+1}$ by one, and increase the number assigned
to $V_{i_j}$ by one. Moreover, we remove from $\cR^*$ the subtree whose
root is $x$. 
For an illustration, see Figure~\ref{fig:placeholder}.
Note that at the end,  in each cluster we reassigned at most one vertex
from $V(\cR^*)$.
Furthermore, the number of vertices
assigned to $V_{i^{>}}$ decreased by one and the number of
vertices assigned to $V_{i^{<}}$ increased by one.

The procedure is analogous if the root of each tree in $\cR''$ is an in-vertex
in this tree.

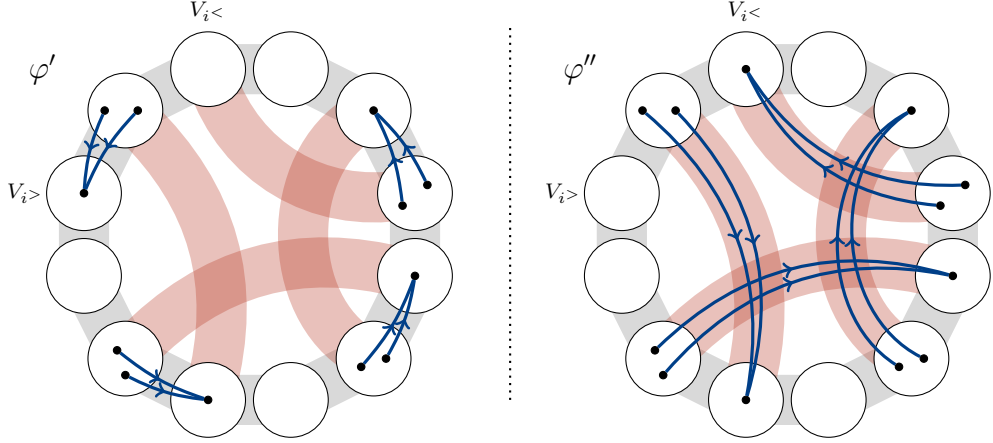
\begin{figure}[ht]
    \centering
    \begin{tikzpicture}[scale=0.55]

    \begin{scope}
        \node  (1aux) at (7, 10) {};
        \node  (2aux) at (5, 9) {};
        \node  (3aux) at (4, 7) {};
        \node  (4aux) at (4, 5) {};
        \node  (5aux) at (5, 3) {};
        \node  (6aux) at (7, 2) {};
        \node  (7aux) at (9, 2) {};
        \node  (9aux) at (11, 3) {};
        \node  (10aux) at (12, 5) {};
        \node  (11aux) at (12, 7) {};
        \node  (12aux) at (11, 9) {};
        \node  (8aux) at (9, 10) {};

        \draw [style=edge superregular] (1aux) to (2aux);
		\draw [style=edge superregular] (2aux) to (3aux);
		\draw [style=edge superregular] (3aux) to (4aux);
		\draw [style=edge superregular] (4aux) to (5aux);
		\draw [style=edge superregular] (5aux) to (6aux);
		\draw [style=edge superregular] (6aux) to (7aux);
		\draw [style=edge superregular] (7aux) to (9aux);
		\draw [style=edge superregular] (9aux) to (10aux);
		\draw [style=edge superregular] (10aux) to (11aux);
		\draw [style=edge superregular] (11aux) to (12aux);
		\draw [style=edge superregular] (12aux) to (8aux);
		\draw [style=edge superregular] (8aux) to (1aux);

        \draw [style=edge superregular, color=BrickRed, bend left=30] (2aux) to (6aux);
        \draw [style=edge superregular, color=BrickRed, bend left=30] (5aux) to (10aux);
        \draw [style=edge superregular, color=BrickRed, bend left=60] (9aux) to (12aux);
        \draw [style=edge superregular, color=BrickRed, bend left=40] (11aux) to (1aux);

		\node [style=cluster] (1) at (7, 10) {};
        
		\node [style=cluster] (2) at (5, 9) {};
        \node [style=vertex mini] (q21) at (4.5,9) {};
        \node [style=vertex mini] (q22) at (5.3,9) {};

		\node [style=cluster] (3) at (4, 7) {};
        \node [style=vertex mini] (q3) at (4,7) {};

		\node [style=cluster] (4) at (4, 5) {};
  
		\node [style=cluster] (5) at (5, 3) {};
        \node [style=vertex mini] (q51) at (5,2.6) {};
        \node [style=vertex mini] (q52) at (4.8,3.2) {};
        
		\node [style=cluster] (6) at (7, 2) {};
        \node [style=vertex mini] (q6) at (7,2) {};
    
		\node [style=cluster] (7) at (9, 2) {};
            
		\node [style=cluster] (9) at (11, 3) {};
        \node [style=vertex mini] (q91) at (11.3,3) {};
        \node [style=vertex mini] (q92) at (10.7,2.8) {};

		\node [style=cluster] (10) at (12, 5) {};
        \node [style=vertex mini] (q10) at (12,5) {};

		\node [style=cluster] (11) at (12, 7) {};
        \node [style=vertex mini] (q111) at (11.7,6.7) {};
        \node [style=vertex mini] (q112) at (12.3,7.2) {};

		\node [style=cluster] (12) at (11, 9) {};
        \node [style=vertex mini] (q12) at (11,9) {};

		\node [style=cluster] (8) at (9, 10) {};

        \draw [style=edge path,postaction={decorate},bend right=10] (q21) to (q3);
        \draw [style=edge path,postaction={decorate},bend right=10] (q22) to (q3);
        \draw [style=edge path,postaction={decorate},bend right=10] (q51) to (q6);
        \draw [style=edge path,postaction={decorate},bend right=10] (q52) to (q6);
        \draw [style=edge path,postaction={decorate},bend right=10] (q91) to (q10);
        \draw [style=edge path,postaction={decorate},bend right=10] (q92) to (q10);
        \draw [style=edge path,postaction={decorate},bend right=10] (q111) to (q12);
        \draw [style=edge path,postaction={decorate},bend right=10] (q112) to (q12);

        \node (varphi) at (3,10) {$\varphi'$};

        \node[left of=3, node distance=22pt] {\tiny $V_{i^{>}}$};
        \node[above of=1, node distance=22pt] {\tiny $V_{i^{<}}$};
    \end{scope}

    \draw[dotted, thick] (14.3, 2) -- (14.3, 11);

    \begin{scope}[xshift=13cm]
        \node  (1aux) at (7, 10) {};
        \node  (2aux) at (5, 9) {};
        \node  (3aux) at (4, 7) {};
        \node  (4aux) at (4, 5) {};
        \node  (5aux) at (5, 3) {};
        \node  (6aux) at (7, 2) {};
        \node  (7aux) at (9, 2) {};
        \node  (9aux) at (11, 3) {};
        \node  (10aux) at (12, 5) {};
        \node  (11aux) at (12, 7) {};
        \node  (12aux) at (11, 9) {};
        \node  (8aux) at (9, 10) {};

        \draw [style=edge superregular] (1aux) to (2aux);
		\draw [style=edge superregular] (2aux) to (3aux);
		\draw [style=edge superregular] (3aux) to (4aux);
		\draw [style=edge superregular] (4aux) to (5aux);
		\draw [style=edge superregular] (5aux) to (6aux);
		\draw [style=edge superregular] (6aux) to (7aux);
		\draw [style=edge superregular] (7aux) to (9aux);
		\draw [style=edge superregular] (9aux) to (10aux);
		\draw [style=edge superregular] (10aux) to (11aux);
		\draw [style=edge superregular] (11aux) to (12aux);
		\draw [style=edge superregular] (12aux) to (8aux);
		\draw [style=edge superregular] (8aux) to (1aux);

        \draw [style=edge superregular, color=BrickRed, bend left=30] (2aux) to (6aux);
        \draw [style=edge superregular, color=BrickRed, bend left=30] (5aux) to (10aux);
        \draw [style=edge superregular, color=BrickRed, bend left=60] (9aux) to (12aux);
        \draw [style=edge superregular, color=BrickRed, bend left=40] (11aux) to (1aux);

		\node [style=cluster] (1) at (7, 10) {};
        \node [style=vertex mini] (q1) at (7,10) {};
        
		\node [style=cluster] (2) at (5, 9) {};
        \node [style=vertex mini] (q21) at (4.5,9) {};
        \node [style=vertex mini] (q22) at (5.3,9) {};

		\node [style=cluster] (3) at (4, 7) {};

		\node [style=cluster] (4) at (4, 5) {};
  
		\node [style=cluster] (5) at (5, 3) {};
        \node [style=vertex mini] (q51) at (5,2.6) {};
        \node [style=vertex mini] (q52) at (4.8,3.2) {};
        
		\node [style=cluster] (6) at (7, 2) {};
        \node [style=vertex mini] (q6) at (7,2) {};
    
		\node [style=cluster] (7) at (9, 2) {};
            
		\node [style=cluster] (9) at (11, 3) {};
        \node [style=vertex mini] (q91) at (11.3,3) {};
        \node [style=vertex mini] (q92) at (10.7,2.8) {};

		\node [style=cluster] (10) at (12, 5) {};
        \node [style=vertex mini] (q10) at (12,5) {};

		\node [style=cluster] (11) at (12, 7) {};
        \node [style=vertex mini] (q111) at (11.7,6.7) {};
        \node [style=vertex mini] (q112) at (12.3,7.2) {};

		\node [style=cluster] (12) at (11, 9) {};
        \node [style=vertex mini] (q12) at (11,9) {};

		\node [style=cluster] (8) at (9, 10) {};

        \draw [style=edge path, bend left=30, postaction={decorate}] (q21) to (q6);
        \draw [style=edge path, bend left=30, postaction={decorate}] (q22) to (q6);
        \draw [style=edge path, bend left=30, postaction={decorate}] (q51) to (q10);
        \draw [style=edge path, bend left=30, postaction={decorate}] (q52) to (q10);
        \draw [style=edge path, bend left=60, postaction={decorate}] (q91) to (q12);
        \draw [style=edge path, bend left=60, postaction={decorate}] (q92) to (q12);
        \draw [style=edge path, bend left=30, postaction={decorate}] (q111) to (q1);
        \draw [style=edge path, bend left=30, postaction={decorate}] (q112) to (q1);
        
        \node (varphi) at (3,10) {$\varphi''$};
        \node[left of=3, node distance=22pt] {\tiny $V_{i^{>}}$};
        \node[above of=1, node distance=22pt] {\tiny $V_{i^{<}}$};
    \end{scope}
\end{tikzpicture}
    \caption{Balancing a switchy tree.}
    \label{fig:placeholder}
\end{figure}

\sepline

\noindent\textit{Bare trees.} If $T$ is bare,
we use the paths in~$\cR^*$ to balance the assignment. First,
assume that the paths in~$\cR^*$ start with an out-vertex. We choose
a vertex $x \in \Root(\cR^*)$ that is the starting vertex of a path $Q=x_0 x_1\dots x_{p-1}\in \cR^*$,
such that $x_0$ is assigned to $V_{i^{>}-1}$. By similar calculations
as in the leafy and neutral case, we can show that such a path $Q$
always exists.
If there is both an arc from $V_{i^{>}-1}$ to $V_{i^{<}}$, and an arc
from $V_{i^{<}}$ to $V_{i^{>}+1}$, then we can reassign
the second vertex $x_1$ of $Q$ (which by (A6') is currently assigned to $V_{i^{>}}$) to $V_{i^{<}}$, thus decreasing the number of vertices assigned to $V_{i^{>}}$ by
one and increasing the number assigned to $V_{i^{<}}$ by one, without
affecting the other clusters.
However, one or both of these arcs may be absent, and so,
we again use skewed-traverses.
Let
\[
  W_1 = \Big((V_{i^{>}-1},V_{i_1}),\dots,(V_{i_{t_1}-1},V_{i^{<}})\Big)\qand  W_2 = \Big((V_{i^{<}},V_{i'_1}),\dots,(V_{i'_{t_2}-1},V_{i^{>}+1})\Big)
\]
be a $(V_{i^{>}-1},V_{i^{<}})$-skewed-traverse of length~$t_1$ and a
$(V_{i^{<}},V_{i^{>}+1})$-skewed-traverse of length~$t_2$,
respectively, with $t_1,t_2 \leq \lceil 2/\nu' \rceil + 1$. 
We start by reassigning the vertices~${x_1,\dots,x_{t_1k_B}}$
in a way that every cluster receives the same amount of these vertices.
Namely, for each $j \in [t_1]$, we reassign the
path~$x_{1+(j-1)k_B},\dots,x_{j k_B}$ to the Hamiltonian path
induced by $V_{i_j},\dots,V_{i_j-1}$. We then assign~$x_{1+t_1k_B}$
to $V_{i^{<}}$. Notice that by doing this we decreased the
number of vertices assigned to $V_{i^{>}}$ and increased the number of
vertices assigned to $V_{i^{<}}$ by one, without affecting the other clusters.
However, this new assignment is not valid. To fix this, we reassign
the vertices~$x_{t_1k_B+1,\dots,(t_1+t_2)k_B}$ as we did before,
but using $W_2$. Then we obtain a valid assignment,
because $x_{(t_1+t_2)k_B+1}$, by (A6'), is assigned to $V_{i^{>}+1}$,
the vertex $x_{(t_1+t_2)k_B}$
is now assigned to~$V_{i'_{t_2}-1}$, and $(V_{i'_{t_2}-1},V_{i+1}) \in W_2$.
Moreover, note that the reassignment of $x_{t_1k_B+1},\dots,x_{(t_1+t_2)k_B}$
does not affect the number of vertices assigned to each cluster.
Therefore, we obtained a new assignment such that the number of vertices
assigned to $V_{i^{>}}$ decreased by one, and the number of vertices
assigned to $V_{i^{<}}$ increased by one.

The procedure is analogous when the paths in $\cR^*$ start with an in-vertex.

\sepline

\noindent By repeating this procedure  until there is no pair of clusters $V_{i^{>}}$
and~$V_{i^{<}}$ with~${|(\phi'')^{-1}\cap V_{i^{>}}| > |V_{i^{>}}|}$
and~${|(\phi'')^{-1}(V_{i^{<}}) \cap V_{i^{<}}| < |V_{i^{<}}|}$, we obtain
an assignment
\[
  \phi'': V(T_B) \rightarrow V(R_B) \cup U_0
\]
such that, for all $i \in [k_B]$, the following holds.
\begin{enumerate}
  \item[(A1'')] $|(\phi'')^{-1}(V_i) \cap \Root(\cP')| = \frac{|\cP'|}{k_B} \pm n^{1-\beta}$, 
  \item[(A2'')] $|(\phi'')^{-1}(V_i) \cap V(\cP')| = \frac{|V(\cP')|}{k_B} \pm n^{1-\beta}$,
  \item[(A3'')] $|(\phi'')^{-1}(V_i)| = |V_i|$, and
  \item[(A4'')] every arc of $\cP'$ is assigned to an arc of $H_B$.
\end{enumerate}
\subsection{Embedding of $T$.}
\label{ssec:embbeding}

Our final goal is to embed $T$ in $D$. First we will embed $T_B$ in~$D_B$, and then
 we  will embed $T_A$ in $D_A$.

\vspace{0.1cm}
\noindent
\textit{Embedding of $T_B$.} We start by setting the stage for the application
of the Blow-up Lemma. Recall that $U_0,V_1,\dots,V_{k_B}$ is a partition
of $V(D_B)$ such that all~$(V_i,V_j)$ are~$\eps_B$-regular with density~$0$ or~$d\pm \eps_B$,
and all $(V_i,V_{i+1})$ are~$(\eps_B,d)$-super-regular.
Also recall that in the incorporation
of $U_0$~(see~\eqref{eq:deg_exp}), for each  $u \in U_0$
we 
chose $V_{i_v^{+}}$ and $V_{i_v^{-}}$ with
\[
  |N^{+}_{G_B}(u,V_{i^{+}})| \geq \frac{\gamma|V_{i^{+}}|}{8} \qand 
  |N^{-}_{G_B}(u,V_{i^{-}})| \geq \frac{\gamma|V_{i^{-}}|}{8}.
\]

We apply the Blow-up Lemma (Lemma~\ref{lemma:blow_up}) with 
\[
  \Delta,\quad
  K_1 =
  K_2 = 1,\quad
  K_3 = 2,\qand
  c = \frac{\gamma}{8}.
\]
The constants $\eps_B$ and $\eps^{*}$ will play the role of~$\eps$ and~$\eps'$.
Note that we have the following hierarchy of constants
\[
  0 < \eps_B \ll \eps^{*} \ll \delta \ll d \ll \Delta^{-1},K_1^{-1},K_2^{-1},K_3^{-1},c.
\]

Let $D_B'$ be the digraph obtained by removing, for each $v \in U_0$,
all arcs incident to $v$ except those going to~$V_{i_v^{+}}$ or coming
from~$V_{i_v^{-}}$. In addition, we remove all arcs between pairs of clusters $(V_i,V_j)$
for which $(V_i,V_j)$ is not an arc of $R_B$.
In the application of the Blow-up Lemma, the underlying
graph of $D_B'$ plays the role of $D$, and the tree $T_B$ plays the role 
of~$H$. The set $U_0$ will correspond to the exceptional set $V_0$.
The partition~$L_0,\dots,L_{k_B}$ of~$V(T_B)$ is the partition induced
by the assignment~$\phi''$.
Furthermore, we set $\psi(v) = \phi''(v)$, for all~${v \in U_0}$,
and~$I = L_0 \cup V(\cP')$.

With the stage prepared, we now verify that conditions (C1)--(C9)
of the Blow-up Lemma (Lemma~\ref{lemma:blow_up}) hold for this setting. By~\eqref{eq:size_u0}, we have
$|L_0| = |U_0| \leq 8\eps_An_B \leq d n_B$,
and thus~(C1) is satisfied. Condition (C2) holds
by the definition of $I$. Conditions (C3), (C6), and (C7) hold by the
construction of $\phi''$. Condition (C4) holds because $\phi''$ assigns
to $U_0$ only vertices of in- or out-degree at most~2.
Therefore, for all $i \in [k_B]$,
\[
  |N_{T_B}(L_0) \cap L_i| \leq 2|L_0| \leq 14\eps_A n_B \leq d|V_i|.
\]
We now verify condition (C5). The choice of the sets $D_i$,
for every $i \in [k_B]$, depends on the type of $T$.

\sepline

\noindent \textit{Leafy trees.} If $T$ is leafy, then for
each $i \in [k_B]$ we select a collection $\cP''_i$ of $\delta|V_i|/2$ subtrees
of~$\cP'$ whose root was assigned to $V_i$. We
set $D = \bigcup_{i \in [k_B]}V(\cP''_i)$, and for each $i \in [k_B]$ we
set~${D_i = D \cap L_i}$. Since $\cP'$ consists of leaves and their parents,
by (A6''), we have $|D_i| = \delta |V_i|$, for every~${i \in [k_B]}$.
So, for every~${i \in [k_B]}$ we have
\[
  |N_{T_B}(D) \cap L_i| = |N_{T_B}(D_{i-1}) \cap L_i| + |N_{T_B}(D_{i+1}) \cap L_i| = \delta|V_i|.
\]
Consequently, 
$\big||N_{T_B}(D) \cap L_i| - |N_{T_B}(D) \cap L_j|\big| = 0 < \eps_B|V_i|$ for every $1 \leq i < j \leq k_B$, and thus~(C5) holds.

\sepline

\noindent \textit{Bare trees.} If $T$ is bare, then, for
each $i \in [k_B]$, we select a collection~$\cP''_i$ of
$\frac{\delta|V_i|}{pk_B}$ subtrees of~$\cP'$ whose root was
assigned to $V_i$.
We set $D = \bigcup_{i \in [k_B]}V(\cP''_i)$,
and, for each $i \in [k_B]$, we set~$D_i = D \cap L_i$. Since every subtree
in $\cP'$ is a directed path of length $p$, and by~(A6'')
these paths wind around the Hamilton cycle $H_B$, it follows
that~${|D_i| = \delta |V_i|}$, for every~${i \in [k_B]}$. Moreover, for
every~${i \in [k_B]}$,
\[
  |N_{T_B}(D) \cap L_i| = |N_{T_B}(D_{i-1})\cap L_i| + |N_{T_B}(D_{i+1})\cap L_i| = \delta|V_i|,
\]
so again, condition (C5) holds.

\sepline

\noindent \textit{Switchy trees.} If $T$ is switchy, then, for
each $i \in [k_B]$, select a collection~$\cP''_i$ of $\frac{\delta|V_i|}{4}$
subtrees of~$\cP'$ whose root was assigned to $V_i$.
We set $D = \bigcup_{i \in [k_B]}V(\cP''_i)$,
and, for each $i \in [k_B]$, we set~${D_i = D \cap L_i}$. Since
all paths in $\cP''$ have the same orientation and length $4$, and
they wind around $H_B$, by (A6''), it follows that $|D_i| = \delta|V_i|$, for
all $i \in [k_B]$. Moreover, they are perfectly distributed over~$H_B$.
Therefore, for every $i \in [k_B]$,
\[
  |N_{T_B}(D) \cap L_i| = |N_{T_B}(D_{i-1})\cap L_i| + |N_{T_B}(D_{i+1})\cap L_i| = \delta|V_i|,
\]
and hence, condition (C5) holds.

\sepline

It only remains to check conditions (C8) and (C9). For   $i \in [k_B]$, let~${E_i \subseteq V_i}$
with $|E_i| = \eps^{*} |V_i|$ be given. Since $|I| = |L_0| + |V(\cP')| \gg \delta n$,
we can choose an arbitrary set~${F_i \subseteq L_i \cap (I \setminus D)}$
consisting of leaves or internal vertices of paths
with $|F_i| = |E_i|$. Let~${\psi_i:E_i \rightarrow F_i}$ be
any bijection. To verify that (C8) holds for this choice, let $v \in E_i$,
and let $j \in [k_B]$ such that~$L_j$ contains a neighbour of $\psi_i(v)$.
By~(A6''), all edges incident to $F_i$ are assigned to 
the Hamilton cycle $H_B$. Therefore,~$(V_i,V_j)$ is
$(\eps_B,d)$-super-regular. Consequently,~${|N_{D_B^{*}}(v)\cap V_j| \geq (d-\eps_B)|V_j|}$, and  (C8) holds.
Finally, let $F = \bigcup_{i\in [k_B]}F_i$. As the vertices in~$F_i$
have degree at most~$2$, it follows that
$|N_{T_B}(F)\cap L_i| \leq 2|E_i| \leq 2\eps^{*}|V_i|$
and so (C9) holds with~${K_3 = 2}$.

Since conditions (C1)--(C9) are satisfied, the Blow-up Lemma  (Lemma~\ref{lemma:blow_up}) gives
a copy of~$T_B$ in~$D_B^{*}$. By the construction of~$\phi''$, this copy
corresponds to an oriented copy of~$T_B$ in~$D_B$.

\vspace{0.1cm}
\subsection{Embedding of $T_A$.} 

To finish the embedding of $T$, we
need to embed $T_A$ into~$D_A$. Let~${v \in V(D_B)}$ be the vertex that hosts 
the root $r_B$ of $T_B$.
As~${|N^{\pm}_G(v,A)| \geq \frac{\gamma}{4}|A|}$, by (D2), there
are $U_{i_v^{+}}$ and $U_{i_{v^{-}}}$ with
\[
  |N^{+}(v,U_{i_v^{+}})| \geq \frac{\gamma|U_{i_v^{+}}|}{8} \qand |N^{-}(v,U_{i_v^{-}})| \geq \frac{\gamma |U_{i_v^{-}}|}{8}.
\]
If $r_A$ is an in-neighbour of $r_B$, we find a preliminary
assignment ${\phi_A:V(T_A)\rightarrow V(R_A)}$ as we did
for $T_B$, with the additional requirement that $\phi_A(r_A) = U_{i_v^{-}}$.
Otherwise, i.e.~if $r_B$ is in an out-neighbour  of $r_B$, we
ensure that~$\phi_A(r_A) = U_{i_v^{+}}$. Note that in either case
 the desired assignment exists because
the root of $T_A$ was assigned uniformly
at random in a cluster of $R_A$ (see Definition~\ref{def:semi_random} (2)).  
Since~$D_A$ has no exceptional vertices, we can skip the incorporation
of exceptional vertices step and obtain a perfectly balanced assignment,
just as we did for~$T_B$. This is possible because the total imbalance
of the preliminary assignment $\varphi_A$ is $\sum_{i \in [k_A]} \big||\phi_A^{-1}(U_i)|-|U_i|\big| \leq k_A n^{1-\beta} \ll \eps_A n_A$.
Now, let $D_A'$ be the digraph obtained from $D_A$ by adding
$v$ and all the arcs from $v$ to $U_{i^{+}}$ and from $U_{i^{-}}$ to $v$. Moreover,
we remove all arcs between pairs of clusters $(U_i,U_j)$ for which $(U_i,U_j)$ is not an arc of $R_A$.
We then finish by
applying the Blow-up Lemma with the underlying graph of $D_A'$ playing the role of~$D$
and treating the root $r_B$ as an exceptional vertex, i.e. we set $L_0 = \{r_B\}$ and
$\psi(r_B) = v$.
Hence, we find a copy of $T_A$ in $D_A$, thus completing the embedding
of~$T$ into~$D$.

\section*{Acknowledgements}

The second author would like to thank Tássio Naia for helpful discussions at the early stages of this project.

\end{document}